\documentclass[10pt,reqno]{amsart}
\usepackage{tikz}
\usepackage{comment}%
\usepackage{blindtext}
\usepackage{subcaption}
\usepackage{graphicx}
\usepackage{subfiles}
\usetikzlibrary{quotes}
\usetikzlibrary{arrows}
\usepackage{qtree}
\usepackage{ mathdots }

\tikzset{
    vertex/.style={circle,draw,minimum size=1.5em},
    edge/.style={->,> = latex'}
}
\usepackage{amsthm,amsmath,amssymb}
\usepackage[english]{babel}
\usepackage[utf8]{inputenc}
\usepackage{amsthm}
\usepackage{array}
\usepackage{framed}
\usepackage{pgfplots}
\usepackage{float}
\usepackage{graphicx}
\usepackage{amssymb}
\usepackage[inline]{enumitem}
\usepackage{thmtools}

\usepackage{array, makecell} 

\usepgfplotslibrary{fillbetween}
\pgfplotsset{compat=1.17}

\usepackage{mathtools}
\usepackage[colorlinks=true,backref=page,bookmarksopen=true]{hyperref} 
\usepackage[capitalise]{cleveref} 
\usepackage[font=small]{caption}

\numberwithin{equation}{section}     

\declaretheorem[numberwithin=section]{theorem}  
\declaretheorem[sibling=theorem]{corollary} 
\declaretheorem[sibling=theorem]{lemma}

\declaretheorem[sibling=theorem]{question}
\declaretheorem[sibling=theorem, style=remark]{note}

\declaretheorem[sibling=theorem, style=definition]{definition}

\usepackage{xcolor}
\hypersetup{
    colorlinks,
    linkcolor={red!50!black},
    citecolor={blue!50!black},
    urlcolor={blue!80!black}
}
\makeatletter
\NewCommandCopy\@@pmod\pmod
\DeclareRobustCommand{\pmod}{\@ifstar\@pmods\@@pmod}
\def\@pmods#1{\mkern4mu({\operator@font mod}\mkern 6mu#1)}
\makeatother

\renewcommand*{\backref}[1]{}
\renewcommand*{\backrefalt}[4]{\tiny 
  \ifcase #1 (\textbf{NOT CITED.})%
  \or    (Cited on page~#2.)%
  \else   (Cited on pages~#2.)%
  \fi}
  
 \tikzstyle{important line}=[very thick]
 \tikzstyle{information text}=[rounded corners,fill=red!10,inner sep=1ex]
 
\usetikzlibrary{angles,quotes,calc,shapes,positioning,intersections}
 \tikzstyle{important line}=[very thick]
 \tikzstyle{information text}=[rounded corners,fill=red!10,inner sep=1ex]
 
\usetikzlibrary{angles,quotes,calc,shapes,positioning,intersections}
\NewDocumentCommand\Cycle{O{} m m m O{} m}{%
  \draw[->,> = latex'][#1](#2.{#3+asin(#6/(#4*1.41))}) arc (180+#3-45:180+#3-45-270:#6/2) #5;
}
\tikzset{
    vertex/.style={circle,draw,minimum size=1.5em},
    edge/.style={->,> = latex'}
}

\def\env@matrix{\hskip -\arraycolsep
  \let\@ifnextchar\new@ifnextchar
  \array{*\c@MaxMatrixCols c}}
  \makeatletter
\renewcommand*\env@matrix[1][*\c@MaxMatrixCols c]{%
  \hskip -\arraycolsep
  \let\@ifnextchar\new@ifnextchar
  \array{#1}}
\makeatother

\DeclareMathOperator{\tr}{tr}
\DeclareMathOperator{\JSR}{JSR}

\makeatletter
\newcommand*{\rom}[1]{\expandafter\@slowromancap\romannumeral #1@}
\makeatother

\newcommand{\tribar}[1]{\mathopen{| {\kern -1.5pt} | {\kern -1.5pt} |} {#1}
\mathclose{| {\kern -1.5pt} | {\kern -1.5pt} |}}

\newcommand{\arxiv}[1]{\href{http://arxiv.org/abs/#1}{arXiv:{#1}}}
\newcommand{\doi}[1]{\href{http://dx.doi.org/#1}{\tt DOI}} 
\newcommand{\directlink}[1]{\href{#1}{\tt URL}}
\newcommand{\MRev}[1]{\href{https://mathscinet.ams.org/mathscinet-getitem?mr=#1}{\tt MR}} 
\newcommand{\Zbl}[1]{\href{https://zbmath.org/?q=an:#1}{\tt Zbl}} 


\begin{document}
\title[Partial classification of SMPs]{Partial classification of spectrum maximizing products for pairs of $2\times2$ matrices}

\author{Piotr Laskawiec}
\address{Department of Mathematics, The Pennsylvania State University}
\email{\href{mailto:ppl5146@psu.edu}{ppl5146@psu.edu}}

\date{June 2024}

\subjclass[2020]{15A18 (primary); 15A60, 20G05, 37H15 (secondary)}
\keywords{Joint spectral radius; spectrum maximizing product; Sturmian sequences; simultaneous conjugation}

\begin{abstract}
Experiments suggest that typical finite sets of square matrices admit spectrum maximizing products (SMPs): that is, products that attain the joint spectral radius (JSR). Furthermore, those SMPs are often combinatorially "simple." In this paper, we consider pairs of real $2 \times 2$ matrices. We identify regions in the space of such pairs where SMPs are guaranteed to exist and to have a simple structure. We also identify another region where SMPs may fail to exist (in fact, this region includes all known counterexamples to the finiteness conjecture), but nevertheless a Sturmian maximizing measure exists. Though our results apply to a large chunk of the space of pairs of $2 \times 2$ matrices, including for instance all pairs of non-negative matrices, they leave out certain "wild" regions where more complicated behavior is possible.
\end{abstract}

\maketitle
\section{Introduction}
Given a set of square matrices it is a common problem to study the maximal growth rate of products from that set. To this end we introduce the following definition:
\begin{definition}
Let $\mathcal{A}$ be a bounded set of real $d\times d$ matrices.
The \emph{joint spectral radius} (JSR in short) of $\mathcal{A}$ is given by: 
\begin{equation}\label{e.def_JSR}
\JSR(\mathcal{A}) \coloneqq \lim_{k\to\infty}\sup_{\Pi \in \mathcal{A}^{*k}}\|\Pi\|^{\frac{1}{k}} \, ,
\end{equation}
where $\mathcal{A}^{*k}$
 denotes the collection of all products of elements of~$\mathcal{A}$ with length $k$ and $\|\cdot\|$ is any matrix norm.
\end{definition}
The study of this quantity has been done since the 1960's, beginning with Rota and Strang \cite{rota-strang} who introduced it. For any $k$ and any matrix norm $\|\cdot\|$, the joint spectral radius satisfies the \emph{three members inequalities} (See \cite{Jungers} for more details):
\begin{equation}\label{three member}
\sup_{\Pi \in \mathcal{A}^{*k}}\rho(\Pi)^\frac{1}{k}\leq \JSR(\mathcal{A})\leq \sup_{\Pi \in \mathcal{A}^{*k}}\|\Pi\|^{\frac{1}{k}} \, ,
\end{equation}
where $\rho$ denotes the spectral radius.
The upper bound converges to the $\JSR$ by definition. If we take the $\limsup$ the lower bound also converges by the following theorem of Berger-Wang:
\begin{theorem}\label{e.BW}\cite{berger-wang} For a bounded set of matrices $\mathcal{A}$
\begin{equation}
\JSR(\mathcal{A}) = \limsup_{k\to\infty}\sup_{\Pi \in \mathcal{A}^{*k}}\rho(\Pi)^\frac{1}{k} \,.
\end{equation}
\end{theorem}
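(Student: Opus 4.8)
Write $\bar\rho(\mathcal{A})$ for the right-hand side $\limsup_{k\to\infty}\sup_{\Pi\in\mathcal{A}^{*k}}\rho(\Pi)^{1/k}$; the plan is to prove $\bar\rho(\mathcal{A})\le\JSR(\mathcal{A})$ and $\JSR(\mathcal{A})\le\bar\rho(\mathcal{A})$ separately. The first is free: the left inequality in~\eqref{three member} gives $\sup_{\Pi\in\mathcal{A}^{*k}}\rho(\Pi)^{1/k}\le\JSR(\mathcal{A})$ for every $k$, and this survives passing to the $\limsup$. So the entire content is the reverse inequality $\JSR(\mathcal{A})\le\bar\rho(\mathcal{A})$.

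I would first strip this down by three routine reductions. (i) Replacing $\mathcal{A}$ by its closure changes neither side, since $\overline{\mathcal{A}}^{*k}=\overline{\mathcal{A}^{*k}}$ by continuity of matrix multiplication and $\rho$ is continuous, so we may assume $\mathcal{A}$ compact. (ii) If $\mathcal{A}$ has a non-trivial common invariant subspace, a change of basis makes every $A\in\mathcal{A}$ block upper-triangular; it is standard that both $\JSR$ and $\bar\rho$ of such a family equal the maximum of the two invariants over the two diagonal blocks (for $\bar\rho$, because $\rho(\Pi)=\max_j\rho(\Pi_{jj})$ for block-triangular $\Pi$ and $\limsup$ commutes with a finite maximum; for $\JSR$ this is a classical fact about block-triangular families), so an induction on the dimension $d$ (base case $d=1$, where the statement is immediate) reduces everything to $\mathcal{A}$ \emph{irreducible}. (iii) If $\JSR(\mathcal{A})=0$ then $\bar\rho(\mathcal{A})\ge0=\JSR(\mathcal{A})$; otherwise rescale $\mathcal{A}$ so that $\JSR(\mathcal{A})=1$. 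It now suffices to prove: if $\mathcal{A}$ is compact and irreducible with $\JSR(\mathcal{A})=1$, then $\bar\rho(\mathcal{A})\ge1$.

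For this I would use the existence of a Barabanov norm $\nu$ on $\mathbb{R}^d$, i.e. one with $\max_{A\in\mathcal{A}}\nu(Ax)=\nu(x)$ for every $x$ (the maximum attained by compactness). Choosing, from any starting point $x_0$ with $\nu(x_0)=1$, matrices $A_{n+1}\in\mathcal{A}$ realizing the maximum at $x_n$, one gets an orbit $x_n=A_n\cdots A_1x_0$ confined to the compact $\nu$-unit sphere. Fix any $\epsilon>0$. The orbit has an accumulation point, so there are times $n_1<n_2<\cdots\to\infty$ with $\nu(x_{n_i}-x_{n_j})<\epsilon$ for all $i,j$; the product $\Pi_{i,j}:=A_{n_j-1}\cdots A_{n_i}\in\mathcal{A}^{*(n_j-n_i)}$ then satisfies $\|\Pi_{i,j}\|_\nu\le1$ (every factor is a $\nu$-contraction) and $\nu(\Pi_{i,j}x_{n_i}-x_{n_i})=\nu(x_{n_j}-x_{n_i})<\epsilon$ with $\nu(x_{n_i})=1$. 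The crux is then the elementary lemma: \emph{if $\|\Pi\|_\nu\le1$ and $\nu(\Pi x-x)\le\epsilon$ for some $\nu$-unit vector $x$, then $\rho(\Pi)\ge1-C\epsilon^{1/d}$ for a constant $C=C(\nu,d)$.} Passing to the Euclidean norm (which is equivalent to $\nu$), the smallest singular value of $I-\Pi$ is $O(\epsilon)$ while all of its singular values are $O(1)$, hence $|\det(I-\Pi)|=O(\epsilon)$; but $\det(I-\Pi)=\prod_i(1-\lambda_i)$ over the eigenvalues of $\Pi$, which satisfy $|\lambda_i|\le\rho(\Pi)\le\|\Pi\|_\nu\le1$, so writing $\rho(\Pi)=1-\eta$ forces $|1-\lambda_i|\ge\eta$ for each $i$ and therefore $\eta^d\le|\det(I-\Pi)|=O(\epsilon)$. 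Applying the lemma to $\Pi_{1,j}$ gives $\rho(\Pi_{1,j})\ge1-C\epsilon^{1/d}$ for all $j$, and since $\Pi_{1,j}$ has length $n_j-n_1\to\infty$ we conclude $\bar\rho(\mathcal{A})\ge\limsup_j\bigl(1-C\epsilon^{1/d}\bigr)^{1/(n_j-n_1)}=1$, as required.

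The main obstacle is the irreducible case, and within it the right way to think about it: one must resist the temptation to extract a single product attaining the joint spectral radius — that would be the \emph{finiteness property}, which fails in general — and instead use recurrence of the Barabanov orbit to manufacture an infinite sequence of products of lengths tending to infinity whose spectral radii decay only subexponentially. The determinant/singular-value estimate is what turns ``a contraction that barely moves a unit vector'' into ``a matrix with an eigenvalue near $1$'', and the fact that this must break down as $d\to\infty$ (a large nilpotent shift barely moves $\tfrac1{\sqrt d}\sum_i e_i$) is precisely where the dimension being fixed enters. Everything else — the closure and rescaling normalisations and the block-triangular behaviour of the two invariants — is routine and only needs to be invoked.
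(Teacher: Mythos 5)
The paper does not prove this statement at all: it is quoted as Theorem~\ref{e.BW} with a citation to \cite{berger-wang}, so there is no internal argument to compare against. Judged on its own, your proof is correct and is essentially the modern ``extremal norm'' route to Berger--Wang rather than the original one (Berger and Wang argue through bounded semigroups and an Elsner-type algebraic reduction). The easy inequality, the reduction to closed families, the block-triangular induction (using that $\rho$ of a block-triangular product is the maximum over diagonal blocks and that $\limsup$ commutes with a finite maximum, plus the classical fact that the JSR of a block-triangular family is the maximum of the blockwise JSRs), and the normalisation $\JSR=1$ are all sound. The core argument — recurrence of a Barabanov orbit on the $\nu$-unit sphere producing products $\Pi_{1,j}$ of unbounded length that are $\nu$-contractions almost fixing a unit vector, combined with the singular-value/determinant lemma $\eta^d\le|\det(I-\Pi)|=O(\epsilon)$ to get $\rho(\Pi_{1,j})\ge 1-C\epsilon^{1/d}$ — is valid; note that the conclusion needs $\epsilon$ chosen small enough that $1-C\epsilon^{1/d}>0$, and your indices should read $\Pi_{i,j}=A_{n_j}\cdots A_{n_i+1}$, but these are cosmetic. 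The one genuine external input is Barabanov's theorem (existence of the extremal norm for compact irreducible families, which rests on non-defectiveness of irreducible families); its standard proof does not use Berger--Wang, so there is no circularity, but it is a substantial black box and carries most of the weight that your write-up does not itself discharge. What this approach buys over the original is exactly what you say: it isolates the failure of the finiteness property as a non-issue by trading a single optimal product for a subexponentially-decaying sequence of near-optimal ones, and it makes the dependence on the fixed dimension $d$ explicit through the exponent $\epsilon^{1/d}$.
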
 
In this paper, we will focus our attention on products that attain the joint spectral radius in a particular way. We only consider primitive products which refers to a product that is not a power of some shorter product.
\begin{definition}
Let $\Pi\in \mathcal{A}^{*k}$ be a primitive product of length $k$.
We say that $\Pi$ is a \emph{spectrum maximizing product} (\emph{SMP} in short) if
\begin{equation}
\rho(\Pi)^\frac{1}{k}=\JSR(\mathcal{A}) \, .
\end{equation}
\end{definition} It was previously speculated that every finite set of matrices has such a product. This was disproven \cite{finiteness}, but nonetheless it is conjectured that for a generic set this property holds. We ask a related question (in the simplest case):
\begin{question}\label{e.question}
What products can be (relatively unique) SMPs for a pair of $2\times 2$ real matrices?
\end{question}
\begin{note}
See \cite{BL} for the definition of relative uniqueness. This condition excludes trivial cases such as pairs of the form $(A,A)$, where every primitive product is an SMP.
\end{note}
\begin{note}
If we do not restrict the dimension of matrices, any primitive product is a unique (up to cyclic permutations) SMP for some set of matrices of large enough dimension. The proof will appear in \cite{thesis}, although this result is considered folklore.
\end{note}
Although SMPs do not necessarily exist, it can be shown that, informally, there always exist asymptotically optimal stationary infinite products, which can be seen as coming from maximizing measures (See \cref{section 2} for the details). This leads to a modified and more general question.
\begin{question}
What measures can be maximizing?
\end{question}

Numerically, SMPs are often Sturmian (See \cref{section 9} for the definition and properties) and
there are theoretical results in that direction. In \cite{Panti,Kozyakin,MS} families of pairs of matrices for which SMPs (if they exist) are always Sturmian are studied. However, it is not clear, quantifiably speaking, how common these pairs are. We will attempt to at least partially answer this question for pairs in dimension two.
Often times we will assume our matrices are not \emph{reducible}:
\begin{definition}
We say a pair of matrices $(A,B) \in M_2(\mathbb{R})^2$ is reducible if the matrices are simultaneously triangularizable. Otherwise, we say it is irreducible. 
\end{definition}The reducible case is not only rare but also not interesting. If $(A,B)$ is reducible, either only $A$ or only $B$ is an SMP, or all primitive products are SMPs. Therefore, we will omit this case without loss of generality.

In this paper, we will cover a part of irreducible pairs in $ M_2(\mathbb{R})^2$ by 4 regions, $\mathcal{R}_{cross},\mathcal{R}_{mix},\mathcal{R}_{neg},\mathcal{R}_{copar}$ where we can describe the SMPs more precisely. We have the following descriptions of the regions, where $\mathcal{D}$ denotes the set of real diagonalizable matrices:
\begin{align*}
    (A,B) \in \mathcal{R}_{cross}&\iff \begin{cases}
   (A,B) \in \mathcal{D}^2 \\
   \det(AB-BA)>0
    \end{cases}
    \\
    (A,B) \in \mathcal{R}_{mix} &\iff
        \det(A)\det(B)\leq 0
    \\
    (A,B) \in \mathcal{R}_{neg} &\iff \begin{cases}
        \det(A)<0\\
        \det(B)<0
    \end{cases}\\
    (A,B)\in \mathcal{R}_{copar}&\iff\begin{cases}
        (A,B) \in GL^+(2, \mathbb{R})^2\cap \mathcal{D}^2\\
        \det(AB-BA)<0\\
        |\tr{AB}|>\frac{1}{2}|\tr{A}\tr{B}|\\
        \tr{AB}\tr{A}\tr{B}>0
    \end{cases}
\end{align*}

\begin{note}
All the regions are invariant under independent re-scaling of both matrices and swapping of the two matrices.
\end{note}
We are now ready to state the main theorem.
\begin{theorem}\label{intro thm}
We have the following classification of SMPs in these regions:
\begin{enumerate}[label=(\alph*)]

    \item If $(A,B) \in \mathcal{R}_{cross}$, then an SMP exists and all SMPs are either $A$ or $B$.
    \item If $(A,B) \in \mathcal{R}_{mix}$, then there is an SMP of the form $A,B$ or $AB^n$ (if $\det(A)\leq0\leq\det(B)$) or $A^nB$ (if $\det(B)\leq0\leq \det(A)$)
    \item If $(A,B) \in \mathcal{R}_{neg}$, then $A,B$ or $AB$ is an SMP.
    \item If $(A,B) \in \mathcal{R}_{copar}$, then if an SMP exists it is unique (up to cyclic permutations) and it is Sturmian. There always exists exactly one Sturmian maximizing measure.
\end{enumerate}
Furthermore, Lebesgue almost every pair $(A,B) \in \mathcal{R}_{cross}\cup\mathcal{R}_{mix}\cup\mathcal{R}_{neg}$ has a unique SMP up to cyclic permutations. Therefore, generically, the theorem describes all the SMPs in each region.
\end{theorem}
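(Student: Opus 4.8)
\emph{Overall strategy.} The plan is to handle the four regions one at a time using the projective action of $2\times 2$ matrices on the circle $\mathbb{RP}^1$, and then to deduce the genericity statement from real-analyticity of spectral radii. Everywhere I use the variational picture from \cref{section 2}: after rescaling — which, by the Note, leaves every region invariant — one may assume $\rho(A)=\rho(B)=1$, and then $\log\JSR(\{A,B\})$ equals the maximum, over shift-invariant probability measures $\mu$ on the symbolic space $\{A,B\}^{\mathbb N}$, of the top Lyapunov exponent of the associated locally constant cocycle; an SMP corresponds to a maximizing $\mu$ carried by a single periodic orbit, and by \cref{e.BW} a maximizing $\mu$ exists whether or not an SMP does. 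The geometric input, which I would isolate as a lemma, is this: a diagonalizable matrix in $GL^+(2,\mathbb R)$ acts on $\mathbb{RP}^1$ as an orientation-preserving Möbius map with one attracting and one repelling fixed point, a matrix of negative determinant acts as an orientation-reversing homeomorphism (hence has fixed points on the circle), and the two fixed-point pairs of $A$ and $B$ interleave on the circle precisely when $\det(AB-BA)>0$.

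\emph{The ``elementary'' regions.} For $(A,B)\in\mathcal R_{cross}$ (both diagonalizable, fixed-point pairs linked) I would produce, from the linked configuration, a single norm $\|\cdot\|$ on $\mathbb R^2$ for which both normalized matrices are non-expanding and for which $\|Mv\|=\|v\|$ forces $v$ to lie along the attracting eigendirection of $M$ ($M\in\{A,B\}$) — an extremal-norm type construction in which linkedness is exactly what makes such a norm exist. Since $A$ and $B$ share no eigendirection (irreducibility), for any word $W$ using both letters and any $v$ one gets $\|Wv\|<\|v\|$; taking $v$ a dominant eigenvector gives $\rho(W)<1=\max\{\rho(A),\rho(B)\}$. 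Hence every mixed word is strictly sub-optimal, the maximizing measures are the two fixed points of the shift, and the only SMPs are $A$ and $B$. For $\mathcal R_{mix}$ (say $\det A\le 0\le\det B$) and $\mathcal R_{neg}$ the mechanism is orientation bookkeeping: $\det A<0$ makes $A$ an orientation-reversing circle map, so $A$ fixes two points with $A^2$ pushing towards one; a short comparison then shows that in an ergodic maximizing measure the block $AA$ can be excised (replaced by $A$ or deleted) without loss, so an optimal periodic word reads $AB^{n_1}\cdots AB^{n_k}$ — and, using the asymptotics $\rho(AB^{n})^{1/(n+1)}\to\rho(B)$ together with a concavity/balancing estimate in the exponents, this collapses to a single block $AB^{n}$ (or to the measure at $\bar B$), with $n$ a priori bounded via the normalization. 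In $\mathcal R_{neg}$ both letters reverse orientation, $AB$ preserves it, and the same excision shows no letter recurs beyond the block $AB$, leaving $A$, $B$, $AB$.

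\emph{The region $\mathcal R_{copar}$: the crux.} Both matrices lie in $GL^+(2,\mathbb R)$, are diagonalizable, have disjoint (unlinked) axes, and satisfy the extra trace inequalities. I would first put the pair, by a projective conjugacy and the rescaling normalization, into a normal form in which $A$ and $B$ are orientation-preserving circle maps whose fixed points sit in the nested position; in this form the generated semigroup acts monotonically, and the trace conditions $|\tr AB|>\tfrac12|\tr A\tr B|$ and $\tr AB\,\tr A\,\tr B>0$ are precisely what guarantees that the maps $A$, $B$ and their products have the correct order structure — no spurious fixed points, coherent orientations — so that a rotation number is well defined, is monotone in the frequency of $A$, and governs $\rho$ of a periodic word monotonically. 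Feeding this into the Sturmian/rotation-number machinery of \cref{section 9}, in the spirit of \cite{Panti,Kozyakin,MS}, identifies the maximizing measure with the unique invariant measure of the optimal rotation number, which is always Sturmian; it is carried by a periodic orbit — equivalently, an SMP exists, unique up to cyclic permutation — exactly when that rotation number is rational. The delicate point, and the one I expect to take most of the work, is establishing that normal form and proving the monotonicity of $\rho$ in the rotation parameter under exactly the stated inequalities and no weaker ones (the \cite{finiteness} counterexamples, with irrational optimal rotation number, live in this region).

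\emph{Genericity.} On $\mathcal R_{cross}\cup\mathcal R_{mix}\cup\mathcal R_{neg}$ the preceding analysis leaves only finitely many candidate SMP words — $A$, $B$, $AB$, or $AB^{n}$ (resp. $A^{n}B$) with $n$ below the explicit bound. For any two distinct candidates $W_1,W_2$, the set of pairs for which both are SMPs is contained in the zero locus of $(A,B)\mapsto \rho(W_1)^{1/|W_1|}-\rho(W_2)^{1/|W_2|}$, a real-analytic function on the region; exhibiting, in each connected component, one pair at which the two values differ shows this function is not identically zero, so the union over the finitely many pairs $(W_1,W_2)$ of these zero loci has Lebesgue measure zero. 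Off this null set the SMP is unique up to cyclic permutation, which is the final assertion. No such statement holds on $\mathcal R_{copar}$, consistently with the counterexamples it contains.
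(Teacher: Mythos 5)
Your outline reproduces the correct statements region by region, but at each of the places where the paper has to do real work you substitute an assertion for an argument, and some of those assertions are exactly the hard points. In $\mathcal R_{mix}$ and $\mathcal R_{neg}$, the step that collapses a word $AB^{n_1}\cdots AB^{n_k}$ to a single block $AB^{n}$ is not a ``concavity/balancing estimate in the exponents'': for general matrices the inequality $\rho(\Pi)\le\prod_i\rho(AB^{n_i})$ is simply false, and without it no balancing argument starts. The paper obtains it by showing that the pair $(B,AB)$ (or $(A,AB)$) is itself crossing (\cref{lemma cross}), hence simultaneously symmetrizable (\cref{symmetrizing}), which makes every $A^{n}B$ symmetric and turns $\rho$ into the operator norm, giving submultiplicativity across the blocks; the non-invertible case instead uses the exact identity $\tr(XZYZ)=\tr(XZ)\tr(YZ)$ for $\det Z=0$. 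Your ``excision of $AA$'' heuristic does not supply either mechanism. Relatedly, you do not address why an SMP \emph{exists} in $\mathcal R_{mix}$ among the infinite candidate list $\{A,B,AB^{n}\}$ — the paper needs the compactness argument of \cref{gelfandv2} (Gelfand's formula plus \cref{e.BW}) for this. In $\mathcal R_{copar}$ you explicitly defer the crux; note also that monotonicity of $\rho$ in a rotation parameter would not yield uniqueness of the maximizing measure — the paper proves \emph{strict concavity} of the Sturmian parameter $\mapsto$ Lyapunov exponent map (via \cref{alt,coparallel} applied to Christoffel products, plus continuity from domination, \cref{e.lyap cont}), and concavity, not monotonicity, is what forces a unique maximizer.

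The genericity paragraph has two concrete problems. First, the candidate words $AB^{n}$ are countably many with no uniform bound on $n$ over the region (countability still suffices for a null set, but your claimed ``explicit bound'' does not exist). Second, and more seriously, the entire content of the argument is the claim that $\rho(W_1)^{1/|W_1|}-\rho(W_2)^{1/|W_2|}$ is a real-analytic function that is not identically zero on each component; the function involves absolute values and eigenvalue branch choices, so it is not globally analytic, and ``exhibiting one pair where the values differ'' for every pair $(W_1,W_2)$ is precisely what must be proved. The paper circumvents both issues by symmetrizing over eigenvalue branches to get a genuine polynomial relation in the Fricke polynomials, then specializing to $u=v=0$, where the Fricke polynomials become monomials $x^{a-c}y^{b-c}z^{c}$ whose algebraic independence (\cref{independent fricke}) kills the relation for all word pairs at once. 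Your $\mathcal R_{cross}$ norm construction is morally the paper's symmetrization (the Euclidean norm after conjugating to symmetric form is the extremal norm you want, with \cref{strict submult} supplying the strictness), so that part is recoverable; the other gaps are not filled by what you have written.
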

\subsection{Some notes on the regions}
\begin{enumerate}[label=(\roman*)]
    \item The regions are not disjoint. $\mathcal{R}_{cross}$ intersects $\mathcal{R}_{mix}$ and $\mathcal{R}_{neg}$.
    \item The four regions do not cover the whole $M_2(\mathbb{R})^2$. Informally speaking, they cover more than $\frac{3}{4}$ of the space.
    \item The regions $R_{cross},R_{neg},R_{copar}$ are all open and $R_{mix}$ is closed. All of these regions are semialgebraic sets.
    \item The boundary of $\mathcal{R}_{cross}$ consists of reducible pairs $(A,B)$. We will see later that it is closely related to the classical Cayley cubic surface.
    \item Any pair of entrywise non-negative matrices $(A,B)$ belongs to one of the four regions.
    \item To the best of our knowledge, all counterexamples to finiteness conjecture described in the literature lie in 
    $\mathcal{R}_{copar}$.
    \item In the unexplored regions (The complement of all four regions), there are non-empty open sets where generic uniqueness of SMP's fails: see \cite{BL}. Our classification shows that generically, all non-Sturmian SMPs must live in these unexplored regions.
    \item All examples of pairs in \cite{Panti,Kozyakin,MS} that have Sturmian SMPs belong to either $\mathcal{R}_{cross}$ or $\mathcal{R}_{copar}$.
\end{enumerate}
The paper is organized as follows:
\begin{enumerate}
    \item We introduce the framework of working on simultaneous conjugacy classes rather than matrices.
    \item We then prove the classification for $\mathcal{R}_{cross},\mathcal{R}_{mix},\mathcal{R}_{neg}$, treating each region separately.
    \item We then discuss the connection between the joint spectral radius and ergodic optimization. Using this connection we prove the theorem for the region $\mathcal{R}_{copar}$.
    \item Finally we prove the generic uniqueness statement in the main theorem, as well as classify SMPs for non-negative matrices.
\end{enumerate}

At this point we would like to make a distinction between products and words. Let $\{0,1\}^*$ denote the set of all finite sequences of $0$'s and $1$'s. Any element $w \in \{0,1\}^*$ will be referred to as a \emph{word}. Let $A,B \in M_2(\mathbb{R})$, we denote $w(A,B)$ as the matrix \emph{product} obtained by replacing the letter $0$ with matrix $A$ and $1$ with matrix $B$ in the word $w$. Technically, the statement in \cref{intro thm} should be made in terms of words.
\section{Simultaneous Conjugacy classes}
It is easy to see (using \cref{e.BW} for instance) that the $\JSR$ is invariant under simultaneous conjugation. That is why it sometimes useful to work with simultaneous conjugacy classes rather than matrices themselves. We define for a pair of $2\times 2$ complex matrices $(A,B)$ the following numbers:
\begin{equation}\label{e.5_var}
x = \tr A, \quad
y = \tr B, \quad
z = \tr AB, \quad
u = \det A,\quad 
v = \det B.
\end{equation}
We may think of $(A,B) \mapsto (x,y,z,u,v)$ as a mapping from the set of simultaneous conjugacy classes of pairs of complex $2\times2$ matrices to $\mathbb{C}^5$, as all $5$ quantities are invariant under simultaneous conjugation. We will refer to $(x,y,z,u,v)$ as the associated 5-tuples. 

Given a $5$-tuple $(x,y,z,u,v)$ it is easy to find a pair $(A,B)$ with this $5$-tuple, showing that $\phi$ is a surjection. Let $\lambda_1,\lambda_2$ and $\mu_1,\mu_2$ be the roots of $t^2-xt+u$ and $t^2-yt+v$ respectively. Consider the matrices:
\begin{equation}\label{e.surjective}
\centering
A=\begin{bmatrix}
\lambda_1 & 1\\
0 & \lambda_2
\end{bmatrix}, B=\begin{bmatrix}
\mu_1 & 0\\
z-\lambda_1\mu_1-\lambda_2\mu_2 & \mu_2
\end{bmatrix}.
\end{equation}
Using Vieta's formulas and direct computation, this pair's $5$-tuple is $(x,y,z,u,v)$ as desired.

The next result is a theorem by Friedland that provides information on the injectivity of the map $\phi$. Results in this direction restricted to $SL(2,\mathbb{R})$ existed in literature much earlier, see \cite{Goldman} for a modern overview and \cite{Vogt,Fricke} for original references.
\begin{theorem}\cite{Friedland}\label{friedland}
The simultaneous conjugacy class of $(A,B) \in M_2(\mathbb{C})^2$ is uniquely determined by its $5$-tuple $(x,y,z,u,v)$ as long as
\[4 u v-u y^2-v x^2+x y z-z^2\not= 0.\]
Otherwise, the $5$-tuple does not uniquely determine the conjugacy class and all the pairs of matrices associated to it are reducible.
\end{theorem}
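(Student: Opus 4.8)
The plan is to show that $\Delta \coloneqq 4uv - uy^2 - vx^2 + xyz - z^2 \ne 0$ is \emph{equivalent} to irreducibility of $(A,B)$, and then to treat the two cases separately. The main tool is Cayley--Hamilton: $A^2 = xA - uI$, $B^2 = yB - vI$, and, applying it to $A+B$ and using $\det(A+B) = u+v+xy-z$, the fundamental $2\times 2$ trace identity $AB + BA = yA + xB + (z-xy)I$. Reducing words with these relations shows that the unital algebra generated by $A$ and $B$ is spanned by $\{I, A, B, AB\}$. The first thing I would record is that $\Delta = \det(AB - BA)$: since $AB - BA$ has trace $0$, one has $\det(AB-BA) = -\tfrac12 \tr\big((AB-BA)^2\big) = \tr(A^2B^2) - \tr((AB)^2)$, and expanding the two traces via the relations above gives exactly $\Delta$. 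Thus ``$\Delta \ne 0$'' means precisely ``$AB-BA$ is not nilpotent.''

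Next I would establish the lemma: $(A,B)$ is reducible $\iff AB - BA$ is nilpotent $\iff \Delta = 0$. One direction is immediate: after a simultaneous triangularization over $\mathbb{C}$, $AB-BA$ is strictly triangular, hence nilpotent, so $\Delta = 0$. For the converse assume $AB - BA$ is nilpotent. If it is $0$, then $A,B$ commute and share an eigenvector. If $AB - BA \ne 0$, take an eigenvector $v_0$ of $A$; if $Bv_0 \in \mathbb{C}v_0$ we have a common eigenline, so assume $\{v_0, Bv_0\}$ is a basis, in which $B = \begin{pmatrix} 0 & -v \\ 1 & y \end{pmatrix}$ and $A = \begin{pmatrix} \lambda & c_1 \\ 0 & \lambda + c_2 \end{pmatrix}$; a short computation gives $AB - BA = \begin{pmatrix} c_1 & c_1 y + vc_2 \\ c_2 & -c_1 \end{pmatrix}$ with $\det(AB-BA) = -(c_1^2 + c_1 c_2 y + vc_2^2)$. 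One then checks that the line $\operatorname{Im}(AB - BA) = \mathbb{C}\,(c_1,c_2)^{\mathsf T}$ is always $A$-invariant, and is $B$-invariant exactly when $c_1^2 + c_1 c_2 y + vc_2^2 = 0$, i.e.\ exactly when $AB - BA$ is nilpotent. Hence a nonzero nilpotent commutator forces a common invariant line, so $(A,B)$ is reducible. This also settles the ``Otherwise'' sentence's assertion that every pair with $\Delta = 0$ is reducible.

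For uniqueness, suppose $\Delta \ne 0$, so $(A,B)$ is irreducible by the lemma; by Burnside's theorem the algebra it generates is all of $M_2(\mathbb{C})$, and since $\{I,A,B,AB\}$ spans it, $\{I,A,B,AB\}$ is a basis. The relations above express every product $e_i e_j$ (with $e_1,\dots,e_4 = I,A,B,AB$) as a combination of $e_1,\dots,e_4$ whose coefficients are fixed polynomials in $(x,y,z,u,v)$; that is, the structure constants in this basis depend only on the $5$-tuple. Now let $(A',B')$ have the same $5$-tuple; it too is irreducible, so $\{I,A',B',A'B'\}$ is a basis, and the linear isomorphism $\Phi\colon M_2(\mathbb{C}) \to M_2(\mathbb{C})$ with $I\mapsto I$, $A\mapsto A'$, $B\mapsto B'$, $AB\mapsto A'B'$ is multiplicative on the basis (same structure constants), hence a unital $\mathbb{C}$-algebra automorphism of $M_2(\mathbb{C})$. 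By Skolem--Noether it is inner, $\Phi(X) = PXP^{-1}$ for some $P \in GL_2(\mathbb{C})$, so $A' = PAP^{-1}$ and $B' = PBP^{-1}$, and the $5$-tuple determines the simultaneous conjugacy class.

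Finally, when $\Delta = 0$ the $5$-tuple does not determine the class: using that $\phi$ is surjective, take any realizing pair $(A,B)$, which is triangularizable by the lemma, and pass to the diagonal parts $(A_0,B_0)$ of a common triangularization; this leaves $x,y,z,u,v$ unchanged since $A,B,AB$ are then upper triangular. If $(A,B)\not\sim(A_0,B_0)$ we already have two classes; otherwise $(A,B)$ is simultaneously diagonalizable, and a short case analysis --- perturbing one of the two diagonal matrices to a non-semisimple matrix with the same trace and determinant, choosing which matrix so as to leave the original conjugacy class --- again yields two pairs with the same $5$-tuple. The step I expect to be the main obstacle is the converse half of the lemma (nilpotent commutator $\Rightarrow$ common invariant line): this is the one point genuinely special to dimension $2$, and arranging the invariant-line computation so that it cleanly covers the degenerate sub-cases ($A$ scalar, $A$ a single Jordan block, or $Bv_0 \parallel v_0$) is the delicate part; the rest is bookkeeping with Cayley--Hamilton plus the cited structure theorems.
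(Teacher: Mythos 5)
The paper does not prove this statement: it is quoted from Friedland's paper (with the pointers to Goldman, Vogt, Fricke for the $SL(2)$ case), so there is no in-paper argument to compare yours against. Your proof is correct and self-contained, and it follows the classical route for this kind of trace-coordinate rigidity result. The key steps all check out: the identity $AB+BA = yA+xB+(z-xy)I$ does follow from Cayley--Hamilton applied to $A+B$ together with $\det(A+B)=u+v+xy-z$ (the paper's \cref{cool}); the identification $\Delta=\det(AB-BA)=\tr(A^2B^2)-\tr((AB)^2)$ agrees with the paper's \cref{char. of red.}; your commutator computation in the basis $\{v_0,Bv_0\}$ is right, and the line $\mathbb{C}(c_1,c_2)^{\mathsf T}$ is indeed always $A$-invariant and is $B$-invariant precisely when $c_1^2+c_1c_2y+vc_2^2=0$, which closes the lemma ``$\Delta=0\iff$ reducible'' (and your worry about degenerate sub-cases is unfounded: $A$ scalar forces $[A,B]=0$, and a Jordan block for $A$ just gives $c_2=0$, which the computation handles). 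The Burnside--Skolem--Noether argument for the irreducible case is the standard and correct one, since the structure constants of the algebra in the basis $\{I,A,B,AB\}$ are visibly polynomials in the $5$-tuple. The non-uniqueness claim for $\Delta=0$ is only sketched at the end, but the sketch is sound: a simultaneously diagonalizable pair always admits a non-conjugate upper-triangular companion with the same $5$-tuple (unipotent perturbation of whichever factor permits it), and one should just spell out the sub-case where both matrices are scalar. In short: a correct proof of a result the paper takes as a black box.
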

There are many useful alternative formulations of the equation mentioned in \cref{friedland}, we include them without proof since these can be verified using direct computations.
\begin{theorem}\label{char. of red.}
Let $(A,B) \in M_2(\mathbb{C})^2$ and let $(x,y,z,u,v)$ be the associated $5$-tuple. Then, the following quantities are equal:
\end{theorem}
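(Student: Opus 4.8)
The plan is to verify that each of the listed expressions equals $4uv - uy^2 - vx^2 + xyz - z^2$ by a direct computation that reduces every quantity to a polynomial in the five invariants $x,y,z,u,v$ via the Cayley--Hamilton theorem. For a $2\times 2$ matrix $M$ one has $M^2 = (\tr M)M - (\det M)I$, so in particular $A^2 = xA - uI$ and $B^2 = yB - vI$. Substituting these relations repeatedly rewrites any short word in $A$ and $B$ as a linear combination of $I, A, B, AB$ with coefficients polynomial in $x,y,z,u,v$; taking traces (with $\tr I = 2$, $\tr A = x$, $\tr B = y$, $\tr AB = z$) then produces an explicit polynomial. Carrying this out yields the two reductions that do all the work: $\tr((AB)^2) = z^2 - 2uv$ and $\tr(A^2B^2) = xyz - x^2 v - y^2 u + 2uv$.

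Next I would treat the commutator $C \coloneqq AB - BA$. Since $\tr C = 0$, Cayley--Hamilton gives $C^2 = -(\det C)I$, hence $\det C = -\tfrac12\tr(C^2)$; expanding $\tr(C^2) = 2\tr((AB)^2) - 2\tr(A^2B^2)$ and inserting the two reductions above shows $\det(AB - BA) = 4uv - uy^2 - vx^2 + xyz - z^2$. The same mechanism disposes of the remaining formulations: each is a polynomial in traces and determinants of words of bounded length, so the Cayley--Hamilton reduction turns it into an explicit polynomial in $x,y,z,u,v$, and one checks by inspection that this polynomial is the stated one. Where a formulation is phrased through eigenvalues or through an inverse (for instance via $\tr(AB^{-1}) = (xy-z)/v$), I would first carry out the computation on the open dense set where $A$ and $B$ are invertible and diagonalizable, and then extend to all pairs by continuity, since both sides are polynomial (or rational with controlled denominators) in the matrix entries.

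The calculations are routine, so there is no conceptual obstacle; the only steps requiring a small idea rather than pure bookkeeping are (i) using the traceless-ness of the commutator to convert $\det(AB-BA)$ into the trace difference $\tr(A^2B^2) - \tr((AB)^2)$, which is what makes the collapse to $4uv - uy^2 - vx^2 + xyz - z^2$ transparent, and (ii) the density/continuity argument needed to justify any eigenvalue- or inverse-based formula on the locus where $A$ or $B$ is singular or non-semisimple. Beyond that, the proof is a finite check.
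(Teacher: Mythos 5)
Your computation is correct and is exactly the ``direct computation'' the paper alludes to when it omits the proof: Cayley--Hamilton reductions $A^2=xA-uI$, $B^2=yB-vI$ give $\tr((AB)^2)=z^2-2uv$ and $\tr(A^2B^2)=xyz-x^2v-y^2u+2uv$, and the tracelessness of the commutator ties $\det(AB-BA)$ to their difference; item (5) follows the same way using $A^{-1}=\tfrac1u(xI-A)$. Note that the only inverse-based item already carries the hypothesis of invertibility, so the density/continuity step you mention is not actually needed.
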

\begin{enumerate}
 \item $4 u v-u y^2-v x^2+x y z-z^2$
 \item $\det(AB-BA)$
 \item $\frac{1}{4}(\tr(A)^2-4\det{A})(\tr(B)^2-4\det{B})-(\tr(AB)-\frac{1}{2}\tr(A)\tr(B))^2$
 \item $\tr(A^2B^2)- \tr((AB)^2)$
 \item $\det(A)\det(B)(2-\tr(ABA^{-1}B^{-1}))$ (if the matrices are invertible)
\end{enumerate}
\cref{friedland} is stated for complex pairs of $2\times 2$ matrices. In this paper we will restrict ourselves to real matrices only. It is not hard to see which $5$-tuples come from a conjugacy class with real representatives using the following theorem (See also: \cite{Morgan}):
\begin{theorem}\cite[Proposition 4.1]{BL}\label{real-realizable}
A real $5$-tuple $(x,y,z,u,v)\in \mathbb{R}^5$ can be attained by a pair of real matrices $(A,B)$ if and only if \begin{equation}
\min \big( 4u-x^2,4 u v-u y^2-v x^2+x y z-z^2\big) \le 0 \, .
\end{equation}
\end{theorem}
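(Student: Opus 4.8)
The plan is to prove both implications at once by putting $A$ into a real normal form dictated by the sign of $4u - x^2 = 4\det A - (\tr A)^2$ and then determining exactly which triples $(y,z,v)=(\tr B,\tr AB,\det B)$ can be matched by a \emph{real} matrix $B$; since the five invariants are unchanged under simultaneous real conjugation, this loses nothing. Throughout, write $\Delta := 4uv - uy^2 - vx^2 + xyz - z^2$, which by \cref{char. of red.} equals $\det(AB-BA)$ and also $\tfrac14(x^2-4u)(y^2-4v) - (z-\tfrac12 xy)^2$; however the argument below only uses elementary algebra, so neither \cref{char. of red.} nor \cref{friedland} is logically needed.

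First I would treat $4u - x^2 \le 0$, where $\min(4u-x^2,\Delta)\le 0$ holds automatically and the content is that the tuple is \emph{always} realizable. If $4u - x^2 < 0$ then $A$ has distinct real eigenvalues, so up to conjugacy $A = \operatorname{diag}(\lambda_1,\lambda_2)$ with $\lambda_1 \ne \lambda_2$ real; writing $B=(b_{ij})$, the equations $\tr B = y$ and $\tr AB = \lambda_1 b_{11}+\lambda_2 b_{22} = z$ form a $2\times2$ linear system of determinant $\lambda_2-\lambda_1 \ne 0$, so $(b_{11},b_{22})$ is determined over $\mathbb R$, after which $b_{12}b_{21} = b_{11}b_{22}-v$ is solved over $\mathbb R$ (take $b_{12}=1$). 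If $4u - x^2 = 0$ then $A$ has a repeated eigenvalue $x/2$, and I would split into two sub-cases: $A$ a single Jordan block, where $\tr AB = \tfrac12 xy + b_{21} = z$ forces $b_{21} = z - \tfrac12 xy$ and then $(b_{11},b_{22},b_{12})$ can be chosen to hit $y$ and $v$ provided $b_{21}\ne 0$; and the remaining sliver $z = \tfrac12 xy$, where $A = \tfrac x2 I$ together with a companion matrix $B$ of trace $y$ and determinant $v$ does the job. Hence every tuple with $4u - x^2 \le 0$ is realizable.

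The main case is $4u - x^2 > 0$: then $A$ has non-real eigenvalues and is conjugate over $\mathbb R$ to $R = \begin{pmatrix} x/2 & -s \\ s & x/2\end{pmatrix}$ with $s = \tfrac12\sqrt{4u-x^2}\ne 0$. For a real $B = \begin{pmatrix} p & q \\ r & t\end{pmatrix}$, the conditions $\tr B = y$ and $\tr RB = \tfrac12 xy + s(q-r) = z$ fix $t = y-p$ and $q - r = c := (z - \tfrac12 xy)/s$, leaving $p,q$ free; then $\det B = v$ becomes, after completing the square, $(p-\tfrac y2)^2 + (q-\tfrac c2)^2 = \tfrac14(y^2+c^2) - v$. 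A real solution $(p,q)$ exists iff the right-hand side is nonnegative, and multiplying that inequality through by $s^2 = \tfrac14(4u-x^2) > 0$ rewrites it as exactly $\Delta \le 0$. This settles both directions in this case: necessity because any real realization can be simultaneously conjugated into this form, forcing $\Delta\le 0$; sufficiency by solving for $(p,q)$ when $\Delta\le 0$. Combined with the previous paragraph, this gives the asserted equivalence "realizable $\iff \min(4u-x^2,\Delta)\le0$".

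I do not anticipate a genuine obstacle; the proof is bookkeeping. The places that need care are: (i) the degenerate stratum $4u - x^2 = 0$, where scalar versus non-scalar $A$ must be separated and one must verify the constructed $B$ is real and has the prescribed determinant even when a denominator would vanish; (ii) confirming that replacing $A$ by its real Jordan form is harmless — the only remaining freedom is the sign of $s$, which is absorbed by conjugating with $\operatorname{diag}(1,-1)$; and (iii) the final sign-clearing step, where it is precisely the positivity of $s^2$ that makes the displayed inequality equivalent to $\Delta \le 0$ on the nose rather than up to an irrelevant positive factor.
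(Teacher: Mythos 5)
Your proof is correct. Note that the paper does not actually prove this statement --- it is quoted from \cite[Proposition 4.1]{BL} --- so there is no internal argument to compare against; what you have written is a valid, self-contained, elementary replacement for that citation. The structure is the natural one: reduce $A$ to a real normal form according to the sign of $4u-x^2$, observe that when $x^2\ge 4u$ every $(y,z,v)$ is realizable (so the stated minimum condition is vacuously satisfied), and in the elliptic case $4u>x^2$ convert the constraint $\det B=v$ into a circle equation whose solvability over $\mathbb{R}$ is exactly $\tfrac14(x^2-4u)(y^2-4v)-(z-\tfrac12xy)^2\le 0$, i.e.\ item (3) of \cref{char. of red.}. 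I checked the computations: the linear system in sub-case $4u-x^2<0$ has determinant $\lambda_2-\lambda_1\ne 0$; the Jordan-block computation gives $\tr AB=\tfrac12xy+b_{21}$ with the degenerate stratum $z=\tfrac12xy$ correctly rerouted through $A=\tfrac{x}{2}I$ and a companion matrix; and in the elliptic case the completed square and the multiplication by $s^2>0$ reproduce $\Delta\le 0$ on the nose. The necessity direction is also handled properly, since any real pair with $4u>x^2$ can be simultaneously conjugated so that $A$ is the rotation-dilation form, after which $B$ must satisfy the circle equation. No gaps.
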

We should also note that \cref{friedland}  is sufficient to prove that the joint spectral radius of $(A,B)$ only depends on the associated $5$-tuple.
\begin{theorem}
There exists a continuous function $J:\mathbb{C}^5\to \mathbb{R}$ such that $J(x,y,z,u,v)=\JSR(A,B)$ for any $(A,B)$ whose associated $5$-tuple is $(x,y,z,u,v)$.
\end{theorem}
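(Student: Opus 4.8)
The plan is to write down a formula for $J$ that visibly depends only on the $5$-tuple and to read off both well-definedness and continuity from it. The key observation is that the per-word spectral radii are functions of $(x,y,z,u,v)$ alone: for any word $w\in\{0,1\}^{*}$ the numbers $\tr\big(w(A,B)\big)$ and $\det\big(w(A,B)\big)$ are polynomials in $x,y,z,u,v$, obtained by repeatedly applying Cayley--Hamilton $M^{2}=(\tr M)M-(\det M)I$ to collapse traces of words (these are the usual $2\times2$ trace identities underlying \cref{friedland}; see also \cite{Vogt,Fricke,Goldman}). Since $\rho(M)$ is the larger modulus among the two roots of $T^{2}-(\tr M)T+\det M$, it is a continuous function of $(\tr M,\det M)$; hence for each $k\ge1$
\[
r_{k}(x,y,z,u,v)\;:=\;\max_{w\in\{0,1\}^{k}}\rho\big(w(A,B)\big)^{1/k}
\]
is a well-defined continuous function on $\mathbb{C}^{5}$ (a maximum of $2^{k}$ continuous functions), and for any $(A,B)$ with $\phi(A,B)=(x,y,z,u,v)$ it coincides with $\sup_{\Pi\in\mathcal{A}^{*k}}\rho(\Pi)^{1/k}$.

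Next I would establish existence of $J$ together with lower semicontinuity. By the three-member inequality \eqref{three member}, $r_{k}\le\JSR(A,B)$ for every $k$, and by Berger--Wang (\cref{e.BW}), $\JSR(A,B)=\limsup_{k}r_{k}\le\sup_{k}r_{k}$; hence
\[
\JSR(A,B)\;=\;\sup_{k\ge1}r_{k}(x,y,z,u,v).
\]
Because $\phi$ is surjective this holds for every point of $\mathbb{C}^{5}$, so $J:=\sup_{k}r_{k}$ is the desired function; as a supremum of continuous functions it is lower semicontinuous.

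Finally, for upper semicontinuity I would invoke an effective form of the Berger--Wang theorem: there is a constant $\gamma=\gamma(2)\ge1$ depending only on the dimension with $\JSR(A,B)\le\gamma^{1/m}\max_{1\le k\le m}r_{k}$ for all $m\ge1$ (Bochi's inequality). Since $\gamma^{1/m}\to1$ and $\max_{1\le k\le m}r_{k}\nearrow\sup_{k}r_{k}=J$, the continuous functions $\gamma^{1/m}\max_{1\le k\le m}r_{k}$ decrease pointwise to $J$, so $J=\inf_{m}\big(\gamma^{1/m}\max_{1\le k\le m}r_{k}\big)$ is upper semicontinuous, and therefore continuous. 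Along this route there is no serious obstruction beyond quoting Bochi's inequality in exactly this form (a multiplicative error tending to $1$, with constant uniform over $M_{2}$). If one preferred to avoid that input, one could instead combine the standard continuity of $\JSR$ in the matrix entries with the existence of continuous local sections of $\phi$: such sections exist near every $5$-tuple lying in the dense open image of the locus where the Jacobian of $\phi$ has full rank $5$ — in particular near tuples realized by irreducible pairs, on which $PGL_{2}(\mathbb{C})$ acts freely — and the complementary proper subvariety, where $\phi$ degenerates (roughly along $4uv-uy^{2}-vx^{2}+xyz-z^{2}=0$ and the loci $x^{2}=4u$, $y^{2}=4v$ where the natural normal forms collapse), would have to be handled by hand using explicit families of representatives obtained by putting $A$ in diagonal or companion form and solving a linear system for $B$. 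In that alternative approach the bookkeeping over those degenerate strata is the genuine obstacle, which is precisely what the spectral formula above lets us bypass.
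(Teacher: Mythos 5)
Your proposal is correct, and it takes a genuinely different route from the paper. The paper handles well-definedness by a case split via Friedland's theorem (\cref{friedland}): on the locus $4uv-uy^{2}-vx^{2}+xyz-z^{2}=0$ every realization is reducible and the $\JSR$ is an explicit function of $x,y,u,v$, while off that locus the tuple determines the conjugacy class; continuity is then obtained by citing continuity of $\JSR$ in the matrix entries. That last step is the delicate one --- it tacitly requires pushing continuity down through the quotient map $\phi$, e.g.\ via continuous local sections, which is exactly the ``bookkeeping over degenerate strata'' you identify as the obstacle in your alternative route. Your primary route sidesteps this entirely by exhibiting $J=\sup_{k}r_{k}$ as an explicit function on $\mathbb{C}^{5}$: each $r_{k}$ is continuous because traces and determinants of words are polynomials in the tuple (the Fricke polynomials of \cref{e. fricke} together with $\det w(A,B)=u^{a}v^{b}$) and $\rho$ of a $2\times2$ matrix is continuous in $(\tr,\det)$; well-definedness and lower semicontinuity are then immediate from \eqref{three member} and \cref{e.BW}, and upper semicontinuity comes from the effective Berger--Wang bound. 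The one point you should make explicit is that Bochi's inequality is usually stated with a fixed window $k(d)$ and a fixed constant $c(d)<1$, namely $\max_{1\le j\le k(d)}r_{j}\ge c(d)\,\JSR(\mathcal{A})$; the form you invoke, with multiplicative error tending to $1$, follows by applying that inequality to the product sets $\mathcal{A}^{*n}$ and using $\JSR(\mathcal{A}^{*n})=\JSR(\mathcal{A})^{n}$, which gives $\JSR(\mathcal{A})\le c^{-1/n}\max_{1\le i\le nk}r_{i}$. (Also, the functions $\gamma^{1/m}\max_{1\le k\le m}r_{k}$ need not decrease monotonically, but they all dominate $J$ and converge to it, which is all the infimum representation requires.) With that one line added, your argument is complete and is in fact more self-contained at the level of $5$-tuples than the paper's.
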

\begin{proof}
We first establish the existence of such function $J$. If $4 u v-u y^2-v x^2+x y z-z^2= 0$ then by \cref{friedland} any $(A,B)$ that maps to the $5$-tuple must be reducible, hence
\[
 J(x,y,z,u,v)=\max\{\rho(A),\rho(B)\}=\max\left\{\left|\frac{x\pm\sqrt{x^2-4u}}{2}\right|,\left|\frac{y\pm\sqrt{y^2-4v}}{2}\right|\right\}.
\]
 
 On the other hand, if $4 u v-u y^2-v x^2+x y z-z^2\not = 0$, then the $5$-tuple corresponds to a unique conjugacy class. Therefore, the $\JSR$ becomes uniquely determined. This shows that the function $J$ exists and is unique.
 
 Denote by $K(\mathbb{C}^8)$ the space of all compact subsets of $\mathbb{C}^8$, equipped with the Hausdorff metric. Define a mapping $\psi : \mathbb{C}^5 \to K(\mathbb{C}^8)$
by associating to each 5-tuple the set of all possible pairs $(A,B)$ of complex matrices of the form in \cref{e.surjective}. Note that there are at most four such pairs, corresponding to permutations of the roots $\lambda_1,\lambda_2$ and $\mu_1,\mu_2$. Since the zero set of a monic polynomial depends continuously on its coefficients, the mapping \(\psi\) is continuous.

Moreover, since \(\JSR\) is continuous (see, e.g., \cite[Proposition 1.10]{Jungers} or \cite{Jeremias}), the function
\[
M : K(\mathbb{C}^8) \to \mathbb{R}, \quad M(K) = \max_{(A,B) \in K} \JSR(A,B)
\]
is continuous. Consequently, the continuity of \(J\) follows from the commutativity of the diagram below.

\begin{figure}[h]\label{diagram}
\scalebox{.85}{
\centering
\begin{tikzpicture}[node distance=2cm, auto]
\node (A) {$K(\mathbb{C}^8)$};
\node(B) [right of=A] {$\mathbb{R}$};
\node (C) [below of=A] {$\mathbb{C}^5$};
\draw[->](A) to node {$M$}(B);
\draw[<-](A) to node [left] {$\psi$}(C);
\draw[->](C) to node [below=0.5ex] {$J$}(B);
\end{tikzpicture}}
\end{figure}

\end{proof}
We will also need the following theorem originally due to Fricke:
\begin{theorem}\label{e. fricke}
Let $w \in \{0,1\}^*$, then there exists a unique polynomial $F_w(x,y,z,u,v)$ with integer coefficients such that for any $A,B \in M_2(\mathbb{R})$ the following holds:
\begin{equation}
\tr{w(A,B)}=F_w(x,y,z,u,v).
\end{equation}
\end{theorem}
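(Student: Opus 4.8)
The plan is to prove existence by strong induction on the length $|w|$, using the Cayley--Hamilton theorem as the one and only reduction tool, and to obtain uniqueness from the observation that the invariants of real pairs already fill a nonempty open subset of $\mathbb{R}^5$, so that no nonzero polynomial can vanish on them.

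\emph{Existence.} Recall that every $2\times 2$ matrix $M$ obeys $M^2 = \tr(M)\,M - \det(M)\,I$; in particular $A^2 = xA - uI$ and $B^2 = yB - vI$, and $(AB)^2 = z\,AB - uv\,I$ since $\det(AB) = uv$. The base cases are $\tr(I) = 2$, $\tr(A) = x$, $\tr(B) = y$. Now take $w$ with $|w|\ge 2$ and assume the statement for all shorter words. If $w$ has two consecutive equal letters, write $w = w_1 c c\, w_2$ and let $(C, p, q)$ be $(A, x, u)$ or $(B, y, v)$ according to whether $c$ is $0$ or $1$; substituting $C^2 = pC - qI$ inside the product and taking traces gives
\[
\tr\big(w(A,B)\big) \;=\; p\,\tr\big((w_1 c\, w_2)(A,B)\big) \;-\; q\,\tr\big((w_1 w_2)(A,B)\big),
\]
and since $w_1 c\, w_2$ and $w_1 w_2$ are shorter, the induction hypothesis writes both traces, hence $\tr(w(A,B))$, as an element of $\mathbb{Z}[x,y,z,u,v]$. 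If $w$ is strictly alternating (no two equal adjacent letters) and $|w|$ is odd, then its first and last letters coincide, so the cyclic rotation moving the last letter to the front produces two equal adjacent letters; as $\tr$ is cyclically invariant, the previous case applies to that rotation. The only remaining possibility is a strictly alternating word of even length, i.e.\ $(AB)^k$ or $(BA)^k$; these have equal trace by cyclic invariance, and for $k\ge 2$ multiplying $(AB)^2 = z\,AB - uv\,I$ by $(AB)^{k-2}$ and taking traces yields $\tr((AB)^k) = z\,\tr((AB)^{k-1}) - uv\,\tr((AB)^{k-2})$, where the two words on the right are strictly shorter; the cases $k\le 1$ give traces $2$ and $z$. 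This produces $F_w\in\mathbb{Z}[x,y,z,u,v]$ and completes the induction.

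\emph{Uniqueness.} If $F_w$ and $G$ both satisfy the identity for all real $A,B$, then $F_w - G$ vanishes on $\phi\big(M_2(\mathbb{R})^2\big)$. By \cref{real-realizable}, every real $5$-tuple with $x^2 > 4u$ lies in this image, and $\{x^2 > 4u\}$ is a nonempty open subset of $\mathbb{R}^5$; a polynomial vanishing on a nonempty open set is identically zero, so $F_w = G$.

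The whole argument is elementary bookkeeping of integer-polynomial coefficients except for getting the case analysis right. The single reduction move---flatten a repeated adjacent pair via Cayley--Hamilton, recursing on two shorter words---handles essentially everything, but it genuinely stalls on the perfectly alternating even-length words $(AB)^k$, since no cyclic rotation of these ever creates a repeated pair; recognizing that this family must be peeled off separately and dispatched by the Chebyshev-type recursion for powers of the single matrix $AB$ is the one point that needs care.
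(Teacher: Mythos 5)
Your proof is correct. The paper itself omits the proof of this theorem, deferring to Horowitz, Magnus, and Goldman; your argument — strong induction on $|w|$, collapsing a repeated adjacent letter via Cayley--Hamilton, using cyclic invariance of the trace to create such a repeat in the alternating odd-length case, and peeling off the powers $(AB)^k$ with the Chebyshev-type recursion $\tr((AB)^k)=z\,\tr((AB)^{k-1})-uv\,\tr((AB)^{k-2})$ — is essentially the standard proof in those references, and your uniqueness step via \cref{real-realizable} (the image of $\phi$ contains the nonempty open set $\{x^2>4u\}$, on which no nonzero polynomial vanishes) is valid.
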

The polynomial $F_w(x,y,z,u,v)$ is called the \emph{Fricke} polynomial. We omit the proof, for details see \cite{Horowitz,Magnus,Goldman}.
\section{Geometric description of a few regions of interest}
A $GL(2,\mathbb{C})$ matrix is naturally associated to a M\"{o}bius transformation of the Riemann sphere $\hat{\mathbb{C}}$ by the following mapping
\[
A=\begin{bmatrix}
    a & b\\
    c & d
\end{bmatrix}\mapsto 
    f_A(\zeta):=\frac{a\zeta+b}{c\zeta+d}.
\]
We list a few standard properties of $f_A$ and how they relate to $A$.
\begin{itemize}
    \item If $A \in GL(2,\mathbb{R})$, then $f_A$ preserves the real circle $\hat{\mathbb{R}}=\mathbb{R}\cup \{\infty\}$.
    \item If $A \in GL^+(2,\mathbb{R})$, then $f_A$ preserves the upper half-plane $H=\{\zeta \in \mathbb{C}:\text{Im}(\zeta)>0\}$ and ${f_A|}_{H}$ is an isometry with respect to the hyperbolic metric.
    \item If $(w_1,w_2)\in \mathbb{C}^2$ is an eigenvector of $A$, then $\frac{w_1}{w_2}$ is a fixed point of $f_A$.
    \item If $A \in GL(2,\mathbb{R})$ is real diagonalizable with distinct eigenvalues, then $f_A$ has exactly two fixed points in $\hat{\mathbb{C}}$, both contained in $\hat{\mathbb{R}}$, one being an attractor and the other being a repellor under iteration of $f_A$. If in addition, $\det(A)>0$, then there is a unique hyperbolic geodesic which is invariant under $f_A$ called the \emph{translation axis} of $A$. We orient the translation axis so that it points towards the attractor of $f_A$.
\end{itemize}
Equipped with these properties we may now introduce crucial definitions.
\begin{definition}
Let $(A,B)$  be an irreducible pair of real diagonalizable matrices. Let $x_A,y_A$ (respectively $x_B,y_B$) be the fixed points of $f_A$ (resp. of $f_B$) in the circle $\hat{\mathbb{R}}$. We say the pair $(A,B)$ is crossing if the points $x_A$ and $y_A$ belong to different connected components of $\hat{\mathbb{R}}\setminus\{x_B,y_B\}$.
\end{definition}
\begin{note}
In the case of positive determinants, if $(A,B)$ is crossing, then the two translation axes actually cross (intersect), which justifies the terminology.
\end{note}
\begin{definition}
Let $(A,B) \in GL^+(2,\mathbb{R})^2$ be an irreducible pair of real diagonalizable matrices.
\end{definition}
\begin{itemize}
    \item If the translation axes of $A$ and $B$ do not intersect and induce opposite orientation on the region they bound (The region between the respective translation axis and the real line), we say $(A,B)$ is \emph{co-parallel}.
    \item If the translation axes of $A$ and $B$ do not intersect and induce the same orientation on the region they bound, we say $(A,B)$ is \emph{anti-parallel}.
\end{itemize}
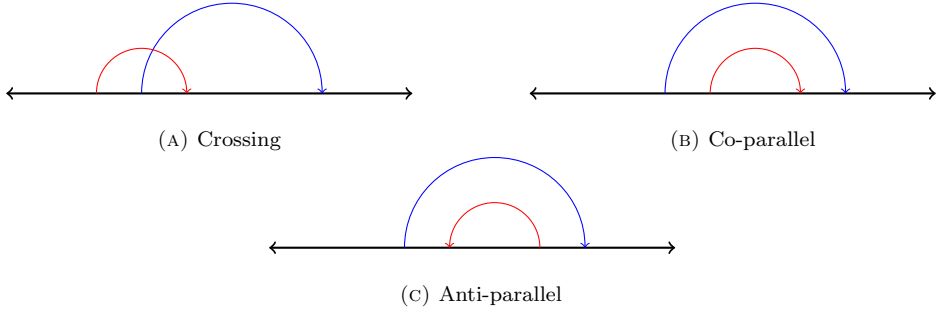
\begin{figure}[h]

\begin{subfigure}{0.45\textwidth}
\begin{tikzpicture}[scale=0.6]
\draw[thick,<->] (-4.5,0) -- (4.5,0) node[anchor=north west] {};
\draw[blue,<-] (2.5,0) arc (0:180:2cm);
\draw[red,<-] (-0.5,0) arc (0:180:1cm);
\end{tikzpicture}
\caption{Crossing}
\end{subfigure}
\hfill
\begin{subfigure}{0.45\textwidth}
\begin{tikzpicture}[scale=0.6]
\draw[thick,<->] (-4.5,0) -- (4.5,0) node[anchor=north west] {};
\draw[blue,<-] (2.5,0) arc (0:180:2cm);
\draw[red,<-] (1.5,0) arc (0:180:1cm);
\end{tikzpicture}
\caption{Co-parallel}
\end{subfigure}
\hfill
\begin{subfigure}{0.45\textwidth}
\centering
\begin{tikzpicture}[scale=0.6]
\draw[thick,<->] (-4.5,0) -- (4.5,0) node[anchor=north west] {};
\draw[blue,<-] (2.5,0) arc (0:180:2cm);
\draw[red,->] (1.5,0) arc (0:180:1cm);
\end{tikzpicture}
\caption{Anti-parallel}
\end{subfigure}
\caption{Example configurations of the translation axes.}
\end{figure}
These definitions are the starting points for formally defining 3 regions of interest.
\begin{align*}
(A,B) \in \mathcal{R}_{cross} & \iff 
(A,B) \text{ is crossing}
\\
(A,B) \in \mathcal{R}_{copar} & \iff 
(A,B) \text{ is co-parallel}
\\
(A,B) \in \mathcal{S}_{anti} & \iff 
(A,B) \text{ is anti-parallel}
\end{align*}
Notice that if $(A,B)$ belongs to one of the three regions, then implicitly $(A,B)$ is irreducible and both matrices must be real diagonalizable. If $(A,B)$ is anti-parallel or co-parallel, we are also implicitly assuming both $A,B \in GL^+(2,\mathbb{R})$.
\begin{note}
When $A,B$ are simultaneously conjugated, independently re-scaled or swapped, the order of the fixed points $\{x_A,y_A,x_B,y_B\}$ is either unchanged or it is reversed. Therefore, being crossing/co-parallel/anti-parallel is invariant under these operations.
\end{note}
These are geometric definitions; however, it will be useful to have an algebraic description of these conditions on hand. These will give us a simple way to determine which configuration a given pair is. It will also formally justify the descriptions of the regions given in the introduction.

\section{Algebraic description of regions $\mathcal{R}_{cross}$, $\mathcal{R}_{copar}$ and $\mathcal{S}_{anti}$}
We begin by algebraically describing the crossing configuration. This was originally done by Jorgensen-Smith \cite{JorgensenSmith}, but is stated in slightly lower generality than we shall need. Thus, we reproduce the original proof which carries over to the generality needed.
\begin{theorem}\label{crossing}
Let $A,B$ be real diagonalizable, then the following are equivalent:
\begin{itemize}
    \item $(A,B)$ is crossing, in other words $(A,B) \in \mathcal{R}_{cross}$ 
 \item $\det(AB-BA)>0$
\end{itemize}
 
\end{theorem}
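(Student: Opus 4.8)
The plan is to translate the geometric crossing condition into an algebraic one by choosing a convenient normal form for the pair $(A,B)$, performing the computation there, and then arguing invariance. First I would reduce to a canonical representative: since $(A,B)$ is irreducible with both matrices real diagonalizable, and since the quantity $\det(AB-BA)$ (equivalently, Friedland's expression $4uv-uy^2-vx^2+xyz-z^2$) is invariant under simultaneous conjugation, re-scaling, and swapping, I may replace $A$ by a diagonal matrix $\operatorname{diag}(\lambda_1,\lambda_2)$ with distinct eigenvalues, so that $f_A$ has fixed points $0$ and $\infty$ in $\hat{\mathbb{R}}$. Then $B$ is an arbitrary real diagonalizable matrix whose two fixed points $x_B,y_B$ are real (and distinct, by irreducibility and diagonalizability). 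The crossing condition becomes: $0$ and $\infty$ lie in different components of $\hat{\mathbb{R}}\setminus\{x_B,y_B\}$, which happens exactly when $x_B$ and $y_B$ have opposite signs, i.e.\ $x_B y_B < 0$.

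Next I would express $x_B y_B$ in terms of the entries of $B=\begin{bmatrix} a & b\\ c & d\end{bmatrix}$. The fixed points of $f_B$ are the roots of $cz^2+(d-a)z-b=0$, so $x_B y_B = -b/c$ (when $c\neq 0$), and more symmetrically the sign of $x_B y_B$ is the sign of $-bc$. Thus crossing $\iff bc<0$. On the other hand, with $A=\operatorname{diag}(\lambda_1,\lambda_2)$ one computes directly $AB-BA = (\lambda_1-\lambda_2)\begin{bmatrix} 0 & b\\ -c & 0\end{bmatrix}$, hence $\det(AB-BA) = (\lambda_1-\lambda_2)^2\, bc \cdot(-1)\cdot(-1)$—more carefully, $\det(AB-BA) = -(\lambda_1-\lambda_2)^2(-bc) = (\lambda_1-\lambda_2)^2\,bc$ up to the sign I should pin down, but in any case $\det(AB-BA)$ and $bc$ have the same sign since $(\lambda_1-\lambda_2)^2>0$. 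Wait—I must be careful with the overall sign: $\det\begin{bmatrix}0 & b\\ -c & 0\end{bmatrix} = 0 - (b)(-c) = bc$, so $\det(AB-BA) = (\lambda_1-\lambda_2)^2\, bc$, which is negative precisely when $bc<0$, i.e.\ precisely when $(A,B)$ is crossing. This sign discrepancy with the claimed statement signals that I have the orientation of "crossing" backwards in the normalization, and I would need to recheck: if $0,\infty$ are separated by $\{x_B,y_B\}$ then $x_B,y_B$ have the \emph{same} sign (both on one side), giving $bc>0$ and $\det(AB-BA)>0$. I expect that careful bookkeeping of which pairs of points are "separated" resolves this in favor of the stated equivalence $\det(AB-BA)>0$.

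The two main obstacles are therefore: (i) handling the degenerate coordinate cases—when $c=0$, one fixed point of $f_B$ is $\infty$, which would force $B$ to share a fixed point with $A$ and hence (together with diagonalizability) make the pair reducible, contradicting our hypothesis, so this case is vacuous; and more delicately, the possibility that a fixed point of $A$ coincides with a fixed point of $B$, again excluded by irreducibility; and (ii) getting the sign exactly right in the separation-versus-same-side dichotomy, which is the crux of matching the geometric picture to the inequality $\det(AB-BA)>0$ rather than $<0$. To finish, I would note that $\det(AB-BA)$ is conjugation/scaling/swap invariant (as already recorded, e.g.\ via \cref{char. of red.}), so the equivalence proved for the normal form holds for all pairs in the conjugacy class; and the boundary case $\det(AB-BA)=0$ is exactly the reducible locus by \cref{friedland}, consistent with the claim that $\mathcal{R}_{cross}$ is an open region whose boundary consists of reducible pairs. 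Since this is Jorgensen–Smith's argument carried over, I would follow their computation but in the slightly greater generality of $GL(2,\mathbb{R})$ (not just $SL(2,\mathbb{R})$), where the only change is carrying the factor $(\lambda_1-\lambda_2)^2$ which is positive and hence sign-irrelevant.
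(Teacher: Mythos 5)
Your approach is exactly the paper's: normalize $A$ to $\operatorname{diag}(\lambda_1,\lambda_2)$ so that $f_A$ fixes $0$ and $\infty$, read off the fixed points of $f_B$ from the quadratic $cz^2+(d-a)z-b$, and compare the sign of their product with the sign of $\det(AB-BA)=(\lambda_1-\lambda_2)^2\,bc$. The reduction, the use of Vieta, and the handling of the degenerate cases ($b=0$ or $c=0$ forces a shared eigenvector, hence reducibility) all match the argument in the paper.

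However, you leave the crux unresolved, and your attempted repair is wrong. The correct chain is: crossing $\iff$ $0$ and $\infty$ are separated by $\{x_B,y_B\}$ $\iff$ $x_B,y_B$ have \emph{opposite} signs $\iff$ $x_By_B=-b/c<0$ $\iff$ $bc>0$ $\iff$ $\det(AB-BA)=(\lambda_1-\lambda_2)^2\,bc>0$. There is no discrepancy at all; your error is the inference ``sign of $x_By_B$ is the sign of $-bc$, thus crossing $\iff bc<0$,'' where $x_By_B<0$ in fact gives $-bc<0$, i.e.\ $bc>0$. Having manufactured a spurious sign clash, you then try to fix it by asserting that if $0$ and $\infty$ are separated by $\{x_B,y_B\}$ then $x_B,y_B$ have the \emph{same} sign; this is false (if $0<x_B<y_B$, then $0$ and $\infty$ lie in the same arc of $\hat{\mathbb{R}}\setminus\{x_B,y_B\}$, namely the complement of $[x_B,y_B]$, so same-sign fixed points mean \emph{not} crossing). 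Ending with ``I expect careful bookkeeping resolves this'' is not acceptable at the one step you yourself identify as the crux; redo the Vieta step and the argument closes cleanly. Two minor further points: the theorem as stated does not assume irreducibility, so you should note (as the paper does via \cref{friedland} and \cref{char. of red.}) that reducible pairs are neither crossing nor satisfy $\det(AB-BA)>0$; and re-scaling does not preserve $\det(AB-BA)$ itself, only its sign, which is all you need.
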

\begin{proof}
 By \cref{friedland}, \cref{char. of red.} and the fact that crossing matrices must be irreducible, we may assume $(A,B)$ is irreducible. Since the pair is irreducible and real diagonalizable, by appropriately conjugating we may assume that
\[A=\begin{bmatrix}
a_1&0\\
0&a_2
\end{bmatrix} \text{  and  } B=\begin{bmatrix}
b_1&b_2\\
b_3&b_4
\end{bmatrix}\]
where  $b_3\not=0$, $b_2\not=0$ and $a_1\not=a_2$.

Since $A$ is diagonal, $f_A$ has fixed points $\{0,\infty\}$. Thus, $(A,B)$ is crossing if and only if $f_B$ has a positive and a negative fixed point. Hence, we need to find conditions for the product of the fixed points of $f_B$ to be negative. However, fixed points of $f_B$ are the zeroes of the following polynomial (where $z$ is just a variable in this case):
\[b_3z^2+(b_4-b_1)z-b_2.\]
By Vieta's formulas we know that the product of the fixed points is exactly $-\frac{b_2}{b_3}$. This is enough to prove the claim. By direct computation
\[\det(AB-BA)=b_2b_3(a_1-a_2)^2,\]
and so:
\[-\frac{b_2}{b_3}<0\iff \det(AB-BA)>0.\]
\end{proof}
\begin{note}
By \cref{char. of red.,crossing} the real diagonalizable matrices on the boundary of the crossing region $\mathcal{R}_{cross}$ satisfy:
\[\det(AB-BA)=4 u v-u y^2-v x^2+x y z-z^2=0.\]
By \cref{friedland} these must be reducible pairs $(A,B)$. Setting $u=v=1$ we get the  equation of the classical Cayley cubic surface.
\end{note}

Next we provide an algebraic description of being co-parallel. We will omit the proof, this was also done by Jorgensen-Smith.
\begin{theorem}\cite{JorgensenSmith}\label{e. jorgensensmith}
Let $(A,B) \in SL(2,\mathbb{R})^2$ be irreducible, real diagonalizable and with positive traces, then $(A,B)$ is co-parallel (in other words $(A,B) \in \mathcal{R}_{copar}$) if and only if $(A,B)$ is not crossing and $\tr{AB}>\frac{1}{2}\tr{A}\tr{B}$.
\end{theorem}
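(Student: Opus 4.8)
The plan is to run the same template as in \cref{crossing}: simultaneously conjugate so that $A$ is diagonal, translate ``co-parallel'' into a statement about the fixed points and orientation of the M\"obius map $f_B$, and compare the outcome with a one-line trace computation. Throughout I use that real-diagonalizability, positive trace, non-crossingness, and co-parallelness are all invariant under $GL^+(2,\mathbb{R})$-conjugation (which also fixes all the traces), so I am free to normalize.

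Since $A\in SL(2,\mathbb R)$ is real diagonalizable with positive trace, its eigenvalues are $\lambda>1$ and $\lambda^{-1}\in(0,1)$, so after conjugation I may take $A=\left(\begin{smallmatrix}\lambda&0\\0&\lambda^{-1}\end{smallmatrix}\right)$; then $f_A(z)=\lambda^2 z$ has repeller $0$, attractor $\infty$, and translation axis the imaginary axis oriented upward. Write $B=\left(\begin{smallmatrix}a&b\\c&d\end{smallmatrix}\right)$, $ad-bc=1$, $a+d>2$. By \cref{char. of red.} together with \cref{crossing}, irreducibility and non-crossingness force $0>\det(AB-BA)$, and the computation in the proof of \cref{crossing} gives $\det(AB-BA)=bc(\lambda-\lambda^{-1})^2$, so $bc<0$; hence the fixed points of $f_B$, the roots $p,q$ of $cz^2+(d-a)z-b$, satisfy $pq=-b/c>0$ and $p+q=(a-d)/c$. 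Reflecting $\mathbb H$ across the imaginary axis fixes the oriented axis of $A$, preserves the co-parallel/anti-parallel dichotomy, and negates $p,q$, so I may assume $0<p<q$ (the other case is identical with signs flipped). To locate the attractor, set $s_1=cp+d$, $s_2=cq+d$: then $s_1+s_2=c(p+q)+2d=a+d>2$ and $s_1s_2=ad-bc=1$, so $s_1,s_2>0$ with exactly one exceeding $1$; since $f_B'(z)=(cz+d)^{-2}$, the attractor is the fixed point where $cz+d>1$, and because $s_2-s_1=c(q-p)$ this is $q$ precisely when $c>0$.

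Now the geometry is immediate: $\gamma_A$ is the imaginary axis oriented up, $\gamma_B$ is the semicircle spanning $[p,q]$ oriented from $p$ to $q$ if $c>0$ and from $q$ to $p$ if $c<0$, and since $p,q>0$ the two axes are disjoint, the region $R$ they bound being the part of $\{\operatorname{Re}z>0\}$ outside the half-disc under $\gamma_B$. A direct check of left/right shows $R$ always lies to the right of the upward $\gamma_A$, while $R$ lies to the left of $\gamma_B$ exactly when $\gamma_B$ runs from $p$ to $q$; hence $\gamma_A$ and $\gamma_B$ induce opposite orientations on $R$ — i.e. $(A,B)$ is co-parallel — if and only if $c>0$, which by $p+q=(a-d)/c>0$ is equivalent to $a>d$. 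On the other hand
\[
\tr(AB)-\tfrac12\tr(A)\tr(B)=(\lambda a+\lambda^{-1}d)-\tfrac12(\lambda+\lambda^{-1})(a+d)=\tfrac12(\lambda-\lambda^{-1})(a-d),
\]
which is positive exactly when $a>d$ since $\lambda>1$. Combining, $(A,B)$ co-parallel $\iff a>d\iff \tr(AB)>\tfrac12\tr(A)\tr(B)$, with the ``not crossing'' clause already built into the normal form.

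The hard part is not any idea but the orientation bookkeeping in $\mathbb H$: justifying the reduction to $0<p<q$ and pinning down the side of $R$ relative to each oriented axis under the convention that the translation axis points toward the attractor. One can shorten the last step by observing that $B\mapsto B^{-1}$ preserves real-diagonalizability and positive trace (as $B^{-1}=\tr(B)\,I-B$) and the unoriented axis of $B$ — hence the non-crossing status — while reversing the orientation of $\gamma_B$; so for a non-crossing pair exactly one of $(A,B),(A,B^{-1})$ is co-parallel, and since $\tr(AB)-\tr(AB^{-1})=2\bigl(\tr(AB)-\tfrac12\tr(A)\tr(B)\bigr)$ changes sign under $B\mapsto B^{-1}$, it then suffices to establish a single implication.
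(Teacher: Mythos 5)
Your argument is correct, but note that the paper does not actually prove this statement: it explicitly defers to J\o{}rgensen--Smith, so there is no internal proof to compare against. What you have written is a valid self-contained proof built on the same template the paper uses for \cref{crossing}: normalize $A=\operatorname{diag}(\lambda,\lambda^{-1})$ with $\lambda>1$, use irreducibility plus non-crossing to get $bc<0$ and hence two fixed points of $f_B$ of the same sign, identify the attractor via the multiplier $f_B'(z)=(cz+d)^{-2}$ together with $s_1s_2=1$, $s_1+s_2=\tr B>2$, and match the resulting orientation dichotomy ($c>0$ iff $a>d$) against the identity $\tr(AB)-\tfrac12\tr A\tr B=\tfrac12(\lambda-\lambda^{-1})(a-d)$. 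All the computations check out, and the two delicate points are handled: (i) the reduction to $0<p<q$ uses an orientation-reversing isometry of $\mathbb{H}$ (conjugation by $\operatorname{diag}(-1,1)$), which flips left and right for \emph{both} axes simultaneously and therefore preserves the ``one side left, one side right'' condition defining co-parallelism, while fixing all traces --- consistent with the paper's note that the configurations are invariant under arbitrary simultaneous conjugation; (ii) the forward implication is covered because co-parallel pairs have disjoint axes, hence are non-crossing, so the standing assumption $bc<0$ is not a loss of generality. Your closing remark --- that $B\mapsto B^{-1}=\tr(B)I-B$ reverses the orientation of $\gamma_B$ and negates $\tr(AB)-\tfrac12\tr A\tr B$, so only one implication needs to be checked --- is a nice symmetry observation that would also shorten the original J\o{}rgensen--Smith argument; to make it fully self-contained one should add that $\tr(AB)=\tfrac12\tr A\tr B$ is excluded for non-crossing irreducible pairs by the third expression in \cref{char. of red.}.
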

Observe that due the independent re-scaling invariance, for any non-zero $\alpha,\beta$:
\begin{equation}\label{invariance}
(A,B) \in \mathcal{R}_{cross}/\mathcal{R}_{copar}/\mathcal{S}_{anti} \iff \left(\alpha {A},\beta B\right ) \in \mathcal{R}_{cross}/\mathcal{R}_{copar}/\mathcal{S}_{anti}.
\end{equation}
Using this, it is not hard to see that the description of the co-parallel region given by the above theorem can be extended to the description given in the introduction (Right before \cref{intro thm}). Furthermore, it shows no generality is lost when assuming $A,B$ have non-negative trace in the following theorem.
\begin{theorem}\label{algebraic desc regions}
Let $(A,B) \in \mathcal{D}^2$ with $\tr{A},\tr{B}\geq 0$. Denote $(x,y,z,u,v)$ as the associated $5$-tuple, then:
\begin{align*}
(A,B) \in \mathcal{R}_{cross} & \iff 
    \frac{x y}{2}-\frac{1}{2} \sqrt{\left(x^2-4 u\right) \left(y^2-4 v\right)}< z< \frac{x y}{2}+\frac{1}{2}
   \sqrt{\left(x^2-4 u\right) \left(y^2-4 v\right)}
\\
(A,B) \in \mathcal{R}_{copar} & \iff 
z> \frac{x y}{2}+\frac{1}{2} \sqrt{\left(x^2-4u\right) \left(y^2-4v\right)} \text{  and  } u,v>0
\\
(A,B) \in \mathcal{S}_{anti} & \iff 
z<\frac{x y}{2}-\frac{1}{2} \sqrt{\left(x^2-4u\right) \left(y^2-4v\right)} \text{  and  } u,v>0
\end{align*}
\end{theorem}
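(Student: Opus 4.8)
The plan is to reduce everything to an explicit computation with the normal form used in the proof of \cref{crossing}, and then to translate the geometric conditions (relative position and orientation of fixed points / translation axes) into sign conditions on the $5$-tuple. First I would exploit the re-scaling and swapping invariances (\cref{invariance}) together with the hypothesis $\tr A,\tr B\geq 0$ to put the pair into a convenient form; since all three regions force $A,B$ to be irreducible and real diagonalizable, I may conjugate so that $A=\operatorname{diag}(a_1,a_2)$ with $a_1\neq a_2$ and $B=\begin{bmatrix}b_1&b_2\\b_3&b_4\end{bmatrix}$ with $b_2,b_3\neq 0$, exactly as in \cref{crossing}. In this form $f_A$ has fixed points $0,\infty$, and $f_B$ has fixed points $\xi_\pm$ equal to the roots of $b_3 z^2+(b_4-b_1)z-b_2$. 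The three configurations are then distinguished by: (crossing) $\xi_+\xi_-<0$; (co-parallel / anti-parallel) $\xi_+\xi_->0$, with the two orientation cases separated by a further sign. By Vieta, $\xi_+\xi_-=-b_2/b_3$ and $\xi_++\xi_-=(b_1-b_4)/b_3$, and \cref{crossing} already gives $\det(AB-BA)=b_2b_3(a_1-a_2)^2$, so the crossing case is the strict inequality $\det(AB-BA)>0$; I would next re-express this as the displayed double inequality on $z$.

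The key computation is to rewrite $\det(AB-BA)>0$ (resp. $<0$) in terms of $(x,y,z,u,v)$. Using formulation (3) in \cref{char. of red.},
\[
\det(AB-BA)=\tfrac14\big(x^2-4u\big)\big(y^2-4v\big)-\Big(z-\tfrac12 xy\Big)^2 .
\]
Since $A,B$ are real diagonalizable, $x^2-4u\geq 0$ and $y^2-4v\geq 0$, so $\det(AB-BA)>0$ is equivalent to $\big(z-\tfrac12 xy\big)^2<\tfrac14(x^2-4u)(y^2-4v)$, i.e. to
\[
\tfrac12 xy-\tfrac12\sqrt{(x^2-4u)(y^2-4v)}<z<\tfrac12 xy+\tfrac12\sqrt{(x^2-4u)(y^2-4v)},
\]
which is precisely the stated characterization of $\mathcal{R}_{cross}$; and $\det(AB-BA)<0$ splits into the two one-sided inequalities $z>\tfrac12 xy+\tfrac12\sqrt{(x^2-4u)(y^2-4v)}$ or $z<\tfrac12 xy-\tfrac12\sqrt{(x^2-4u)(y^2-4v)}$, covering $\mathcal{R}_{copar}$ and $\mathcal{S}_{anti}$ once the sign of $u,v$ is pinned down (both are positive by definition of those two regions, since $A,B\in GL^+(2,\mathbb{R})$). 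So up to the orientation dichotomy, the algebraic description follows purely from \cref{crossing} and the identity above.

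It remains to show that, among the non-crossing pairs with $u,v>0$ (so the translation axes are disjoint), the co-parallel ones are exactly those with $z>\tfrac12 xy+\tfrac12\sqrt{(x^2-4u)(y^2-4v)}$ and the anti-parallel ones exactly those with the reverse strict inequality. By re-scaling (\cref{invariance}) and the assumption $x,y\geq 0$ I may normalize to $A,B\in SL(2,\mathbb{R})$ with positive traces and invoke \cref{e. jorgensensmith}: among non-crossing such pairs, co-parallel is equivalent to $z>\tfrac12 xy$. On the non-crossing locus we have $\big(z-\tfrac12 xy\big)^2\geq\tfrac14(x^2-4u)(y^2-4v)\geq 0$, so $z-\tfrac12 xy$ has a well-defined sign and $z>\tfrac12 xy$ is equivalent to $z\geq\tfrac12 xy+\tfrac12\sqrt{(x^2-4u)(y^2-4v)}$; strictness is automatic since equality would make $\det(AB-BA)=0$, forcing reducibility. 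The anti-parallel case is the complementary sign. The main obstacle I anticipate is bookkeeping rather than conceptual: making sure the normalization to $SL(2,\mathbb{R})$ with positive traces is legitimate (one must check that the sign of $z-\tfrac12 xy$, and the positivity of $u,v$, are preserved under the allowed re-scalings, which they are since $z,u,v$ scale by positive factors $\alpha\beta,\alpha^2,\beta^2$ when $x,y\geq 0$ forces $\alpha,\beta>0$ after the reduction), and carefully handling the degenerate sub-cases $x^2-4u=0$ or $y^2-4v=0$ where the square-root term vanishes and the three regions meet along their common boundary.
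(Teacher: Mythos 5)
Your proposal is correct and follows essentially the same route as the paper: both reduce the crossing case to the sign of $\det(AB-BA)$ via \cref{crossing} and \cref{char. of red.} (you use formulation (3) directly, the paper uses formulation (1) plus the quadratic formula in $z$ --- the same computation), and both settle the co-parallel versus anti-parallel dichotomy by normalizing and invoking \cref{e. jorgensensmith}, with strictness of the inequalities coming from irreducibility. The normal-form setup in your first paragraph is not actually needed for the argument, but it is harmless.
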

\begin{proof}
By \cref{crossing}:
\[(A,B) \in \mathcal{R}_{cross} \iff \det(AB-BA)>0 \]
Using the first characterization in \cref{char. of red.} and applying the quadratic formula on $z$ we may re-write the inequality as\begin{equation}\label{crossing ineq}
    \frac{x y}{2}-\frac{1}{2} \sqrt{\left(x^2-4 u\right) \left(y^2-4 v\right)}< z< \frac{x y}{2}+\frac{1}{2}
   \sqrt{\left(x^2-4 u\right) \left(y^2-4 v\right)},
\end{equation}
which proves the first part.

If $(A,B)$ is co-parallel, the matrices must have positive determinants by definition, thus $u,v>0$. Now by since $(A,B)$ is not crossing,  \cref{crossing ineq} does not hold. That is $z$ is either:
\[z\leq \frac{x y}{2}-\frac{1}{2} \sqrt{\left(x^2-4u\right) \left(y^2-4v\right)} \text{ 
  or  }z\geq  \frac{x y}{2}+\frac{1}{2} \sqrt{\left(x^2-4u\right) \left(y^2-4v\right)}\]
By \cref{e. jorgensensmith} only the second alternative holds.
The inequality is strict, otherwise $(A,B)$ would be reducible by \cref{friedland}.

If $(A,B)$ is anti-parallel the matrices must have positive determinants by definition, thus $u,v>0$. Since we characterized crossing and co-parallel matrices, the anti-parallel region falls out by just looking at what is left:
\[z<\frac{x y}{2}-\frac{1}{2} \sqrt{\left(x^2-4u\right) \left(y^2-4v\right)}\]
Again the inequality must be strict, otherwise $(A,B)$ would be reducible.
\end{proof}
A useful algebraic consequence of the above that will be used later is:
\begin{corollary}\label{alt}
If $(A,B)$ is co-parallel then:  
\begin{equation}
    \rho(AB)>\rho(A)\rho(B).
\end{equation}
\end{corollary}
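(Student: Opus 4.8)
The plan is to translate the co-parallel hypothesis into an inequality among the eigenvalues of $A$ and $B$, and then read off the conclusion from the characteristic polynomial of $AB$. Since the inequality $\rho(AB)>\rho(A)\rho(B)$ is homogeneous of the same degree under $(A,B)\mapsto(\alpha A,\beta B)$ (both sides scale by $|\alpha\beta|$) and since $\mathcal{R}_{copar}$ is invariant under such rescalings (see \eqref{invariance}), I would first normalize so that $\tr A,\tr B\ge 0$. Because $A,B$ are real diagonalizable with $u=\det A>0$ and $v=\det B>0$, all four eigenvalues are real with positive product, and the nonnegative-trace normalization forces them to be positive. Write $p\ge p'>0$ for the eigenvalues of $A$ and $q\ge q'>0$ for those of $B$, so that $\rho(A)=p$, $\rho(B)=q$, $x=p+p'$, $u=pp'$, $y=q+q'$, $v=qq'$, and $\sqrt{x^2-4u}=p-p'$, $\sqrt{y^2-4v}=q-q'$.

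Next I would substitute these into the co-parallel description from \cref{algebraic desc regions}, namely $z>\tfrac{xy}{2}+\tfrac12\sqrt{(x^2-4u)(y^2-4v)}$. Expanding, the right-hand side telescopes to $\tfrac12(p+p')(q+q')+\tfrac12(p-p')(q-q')=pq+p'q'$, so co-parallelity is exactly the statement $z=\tr(AB)>pq+p'q'$. By AM--GM this already gives $z>pq+p'q'\ge 2\sqrt{pp'qq'}=2\sqrt{uv}$, hence $z^2-4uv=z^2-4\det(AB)>0$; so $AB$ has two real eigenvalues $\mu_-<\mu_+$ with $\mu_++\mu_-=z$ and $\mu_+\mu_-=uv$, and (the trace being positive) $\rho(AB)=\mu_+$.

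Finally I would evaluate the monic quadratic $f(t)=t^2-zt+uv$, whose roots are $\mu_\pm$, at $t=pq$: using $z>pq+p'q'$ and $uv=(pq)(p'q')$ one gets $f(pq)=(pq)^2-z\,(pq)+uv<(pq)^2-(pq)^2-(pq)(p'q')+(pq)(p'q')=0$. Thus $pq$ lies strictly between the two roots, $\mu_-<pq<\mu_+$, which is precisely $\rho(AB)>\rho(A)\rho(B)$. There is no serious obstacle in this argument; the one point that needs care is confirming that the co-parallel hypothesis forces the eigenvalues of $AB$ to be genuinely real (otherwise $\rho(AB)$ would equal $\sqrt{uv}\le\rho(A)\rho(B)$ and the statement would be false), and this is handled for free by the bound $z>2\sqrt{uv}$ obtained along the way. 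The remaining work — the eigenvalue substitution — is routine algebra.
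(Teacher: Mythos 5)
Your proof is correct and follows essentially the same route as the paper: both arguments reduce to the algebraic characterization $z>\tfrac{xy}{2}+\tfrac12\sqrt{(x^2-4u)(y^2-4v)}$ from \cref{algebraic desc regions} and then translate it into spectral radii by elementary algebra. The only (harmless) differences are that you keep general determinants instead of normalizing to $SL(2,\mathbb{R})$, and you finish by checking the sign of the characteristic polynomial of $AB$ at $t=\rho(A)\rho(B)$ rather than invoking the monotonicity of $t\mapsto t+1/t$ on $(1,\infty)$.
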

\begin{proof}
The inequality to be proven is invariant under re-scaling of the matrices. Therefore, by \cref{invariance} we may assume $A,B\in SL(2,\mathbb{R})$ and that $\tr{A},\tr{B}\geq0$. In fact, since $(A,B)$ is an irreducible pair of real diagonalizable matrices, $\tr{A},\tr{B}> 2$. By \cref{e. jorgensensmith}, $\tr{AB}>2$. Therefore, we have the following formulas:
\[\rho(A)=\frac{x+\sqrt{x^2-4}}{2},\quad \rho(B)=\frac{y+\sqrt{y^2-4}}{2}, \quad \rho(AB)=\frac{z+\sqrt{z^2-4}}{2} \]
Using these, we may re-write the co-parallel case from \cref{algebraic desc regions} in the following way:
\[\rho(AB)+\frac{1}{\rho(AB)}>\rho(A)\rho(B)+\frac{1}{\rho(A)\rho(B)}\]
Since $\rho(AB)>1$ and $\rho(A)\rho(B)>1$, it follows that
\[\rho(AB)>\rho(A)\rho(B).\]
\end{proof}
\section{What are the possible SMPs when $(A,B) \in \mathcal{R}_{cross}$?}
In this section, we will describe all possible SMPs given that $(A,B)$ is crossing. From here on, we denote $W(a,b)$ as the set of products that have $a$ factors of $A$ and $b$ factors of $B$. We begin with the following lemma:
\begin{lemma}\label{strict submult}
If $A,B \in M_2(\mathbb{R})$ are an irreducible pair of symmetric matrices then for any $\Pi \in W(a,b)$ with $a,b\geq 1$:
\[\|\Pi\|_2<\|A\|_2^a\|B\|_2^b\]
\end{lemma}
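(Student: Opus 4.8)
The plan is to exploit the fact that the operator $2$-norm of a symmetric matrix equals its spectral radius, so the claimed inequality is really a statement about spectral radii of products versus products of spectral radii. Since $A$ and $B$ are symmetric (hence real diagonalizable) and irreducible, neither is a scalar multiple of the identity, and the two matrices do not share an eigenvector. First I would reduce to the case where $A$ is diagonal by an orthogonal conjugation: conjugating by an orthogonal matrix preserves symmetry, preserves the $2$-norm of every matrix, and preserves the products $\Pi$ up to the same conjugation, so it changes neither side of the inequality. After this reduction $A = \mathrm{diag}(a_1,a_2)$ with $|a_1| \ge |a_2|$, say, and $B$ is symmetric with a nonzero off-diagonal entry (nonzero because $A,B$ share no eigenvector).

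The core of the argument is a strict sub-multiplicativity estimate for a single alternation. The key geometric fact is: $\|A\|_2 = |a_1|$ is attained only on the line spanned by $e_1$, and for the symmetric matrix $B$, $\|B\|_2$ is attained only on the (one-dimensional, since we may also assume $B$ has distinct eigenvalues after a further reduction, or handle the repeated-eigenvalue case separately) top eigenspace of $B$, which is \emph{not} the span of $e_1$ because $B$ is not diagonal. So for \emph{any} unit vector $\xi$, we cannot simultaneously have $\|A\xi\|_2 = \|A\|_2$ and $\|B\xi\|_2=\|B\|_2$; more quantitatively, there is a constant $c = c(A,B) < 1$ such that $\min(\|A\xi\|_2/\|A\|_2,\ \|B\xi\|_2/\|B\|_2) \le c$ for all unit $\xi$, by compactness of the unit circle. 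The plan is then to bound $\|\Pi\|_2$ by walking through the word defining $\Pi$ from right to left, applying one matrix at a time to a unit vector and tracking the accumulated norm; at every step where the letter changes from $A$ to $B$ or $B$ to $A$ one would like to extract the factor $c$, but one must be slightly careful because $\|A\xi\|$ small does not make the \emph{next} application of $B$ lose a factor. The cleaner route is: since $a,b\ge 1$, the word $\Pi$ contains at least one occurrence of the pattern ``$B$ immediately followed (on its left) by $A$'' or ``$A$ immediately followed by $B$''. Fix one such adjacent pair; at that single spot, for the unit vector $\xi$ entering that pair we get $\|AB\xi\|_2 \le \|A\|_2 \|B\|_2$ but with the middle vector $B\xi/\|B\xi\|$ being a fixed unit vector, and we use that $\|A\eta\|_2 \le \|A\|_2$ with equality only along $e_1$, together with the observation that $B\xi$ can point along $e_1$ only for $\xi$ in a proper subspace — combining gives $\|AB\xi\|_2 \le c'\|A\|_2\|B\|_2$ uniformly. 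Bounding all other letters by $\|A\|_2$ or $\|B\|_2$ respectively yields $\|\Pi\|_2 \le c' \|A\|_2^a\|B\|_2^b < \|A\|_2^a \|B\|_2^b$.

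I would handle the degenerate sub-cases explicitly. If $a_1 = a_2$, i.e. $A$ is a nonzero scalar multiple of $I$, then $A$ and $B$ are simultaneously diagonalizable, contradicting irreducibility, so this cannot occur; similarly $|a_1| = |a_2|$ with $a_1 = -a_2$ needs a moment's thought (then $\|A\|$ is attained on all of $\mathbb{R}^2$, and one instead uses that $\|B\|$ is attained only on a proper subspace and that $A$ maps that subspace off itself unless $A,B$ share an eigenvector — again impossible). The case where $B$ has a repeated eigenvalue is again impossible for a symmetric irreducible pair, since then $B$ is scalar. So after discarding impossible configurations we are always in the generic situation above.

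The main obstacle, I expect, is making the ``one bad alternation gives a uniform factor $c<1$'' step genuinely uniform over the infinitely many words $\Pi \in W(a,b)$ and over all lengths, rather than getting a factor that degrades as the word grows. The resolution is that the constant only needs to be extracted \emph{once}, at a single fixed adjacent $AB$ or $BA$ pattern, with every other letter contributing its full norm $\|A\|_2$ or $\|B\|_2$; since $\|\cdot\|_2$ is sub-multiplicative this telescoping bound is valid regardless of word length, and the single extracted constant $c' < 1$ depends only on the entries of $A$ and $B$ (through, e.g., the angle between $e_1$ and the top eigenvector of $B$), not on $\Pi$. One should double-check that the ``entering unit vector'' argument is legitimate — i.e. that $\|\Pi\|_2 = \sup_{\|\xi\|_2=1}\|\Pi\xi\|_2$ lets us insert the bound at the chosen interior position — which follows by writing $\Pi = \Pi_{\text{left}}\, (AB \text{ or } BA)\, \Pi_{\text{right}}$ and estimating $\|\Pi\|_2 \le \|\Pi_{\text{left}}\|_2 \cdot \|(AB\text{ or }BA)\|_{2} \cdot \|\Pi_{\text{right}}\|_2$ is \emph{not} quite enough (it only gives $\le$), so instead one argues $\|\Pi\xi\|_2 \le \|\Pi_{\text{left}}\|_2\,\|(AB)(\Pi_{\text{right}}\xi/\|\Pi_{\text{right}}\xi\|)\|_2\,\|\Pi_{\text{right}}\xi\|_2 \le \|\Pi_{\text{left}}\|_2 \cdot c'\|A\|_2\|B\|_2 \cdot \|\Pi_{\text{right}}\|_2$ and then bounds $\|\Pi_{\text{left}}\|_2, \|\Pi_{\text{right}}\|_2$ by the appropriate products of norms.
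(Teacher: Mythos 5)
Your main line of argument is the same as the paper's: since $a,b\ge 1$ the word contains an adjacent pair $AB$ or $BA$, so by submultiplicativity $\|\Pi\|_2\le\|AB\|_2\,\|A\|_2^{a-1}\|B\|_2^{b-1}$ (and $\|AB\|_2=\|BA\|_2$ for symmetric $A,B$), reducing everything to the single strict inequality $\|AB\|_2<\|A\|_2\|B\|_2$; you then prove that inequality exactly as the paper does, by observing that equality would force a top singular vector of $B$ --- an eigenvector of $B$, hence mapped by $B$ to a parallel vector --- to land in the top singular direction of $A$, producing a common eigenvector. Incidentally, the worry in your last paragraph is unfounded: the uniform bound $\|AB\eta\|_2\le c'\|A\|_2\|B\|_2$ over unit vectors $\eta$ is by definition the statement $\|AB\|_2\le c'\|A\|_2\|B\|_2$, so the naive three-factor submultiplicative estimate already delivers the strict inequality once the middle factor is strict.

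The genuine problem is your degenerate case $a_1=-a_2$ (equivalently $\tr A=0$). Your proposed fix --- ``$\|B\|_2$ is attained only on a proper subspace and $A$ maps that subspace off itself'' --- does not work: a symmetric matrix with eigenvalues $\pm a$ satisfies $A^2=a^2I$, hence $\|A\xi\|_2=|a|\,\|\xi\|_2$ for \emph{every} $\xi$, so $\|AB\|_2=|a|\,\|B\|_2=\|A\|_2\|B\|_2$ with equality no matter where $A$ sends the top eigenspace of $B$. In this case the strict inequality genuinely fails for $\Pi=AB$: take $A=\mathrm{diag}(1,-1)$ and any symmetric $B$ with nonzero off-diagonal entry; the pair is irreducible, yet $\|AB\|_2=\|B\|_2=\|A\|_2\|B\|_2$. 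So no argument can close this case --- the conclusion requires excluding matrices that are scalar multiples of orthogonal ones, i.e.\ it needs $\tr A\neq 0$ and $\tr B\neq 0$ in addition to irreducibility (the case $a_1=a_2$ you correctly discard as forcing reducibility). To be fair, the paper's own proof has the identical blind spot: when the two singular values of $A$ coincide, ``the singular vector of $B$ must map to a singular vector of $A$'' is vacuously satisfied and yields no contradiction. You have in effect rediscovered a gap in the source rather than introduced a new one, but your claim that this case can be ``handled'' is incorrect as written.
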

\begin{proof}
First note that
\[\|\Pi\|_2\leq\|AB\|_2\|A\|_2^{a-1}\|B\|_2^{b-1}\]
by sub-multiplicativity. That is why it is sufficient to prove that $\|AB\|_2=\|A\|_2\|B\|_2$ is impossible. Assume the equality holds, then the singular vector of $B$ must map to a singular vector of $A$. Since $A,B$ are symmetric matrices, the singular vectors are eigenvectors and consequently, an eigenvector of $B$ must map into an eigenvector of $A$. However, that would mean $A$ and $B$ share an eigenvector and so $A,B$ are reducible, which is a contradiction.
\end{proof}
\begin{theorem}\label{symmetrizing}
$(A,B)$ is crossing if and only if $(A,B)$ is irreducible and the matrices are simultaneously conjugate to symmetric matrices.
\end{theorem}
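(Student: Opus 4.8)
The plan is to prove the equivalence by establishing both directions, working with the fixed-point/translation-axis picture on one side and the spectral theorem on the other.

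For the direction "simultaneously conjugate to symmetric $\implies$ crossing": suppose $A,B$ are irreducible and, after simultaneous conjugation, both symmetric. By the spectral theorem each is orthogonally diagonalizable, so each has an orthonormal eigenbasis; since the pair is irreducible, these two eigenbases are distinct. The fixed points of $f_A$ on $\hat{\mathbb{R}}$ are the (real) eigendirections of $A$, thought of as points of $\mathbb{RP}^1 \cong \hat{\mathbb{R}}$, and similarly for $B$. The key observation is that the two eigendirections of a symmetric matrix are orthogonal, hence correspond to two points of $\hat{\mathbb{R}}$ that are "antipodal" under the identification of $\mathbb{RP}^1$ with a circle (e.g. via $\theta \mapsto \tan\theta$, orthogonal lines are at angle $\pi/2$, i.e. diametrically opposite on the circle parametrized by $2\theta$). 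Two pairs of diametrically opposite points on a circle always "link" each other — each pair of antipodes separates the other pair — unless the pairs coincide. Since irreducibility forbids coincidence, $\{x_A,y_A\}$ and $\{x_B,y_B\}$ interleave on $\hat{\mathbb{R}}$, which is exactly the crossing condition. (Alternatively, one can skip the geometry and verify algebraically: if $A,B$ are symmetric and irreducible, compute $\det(AB-BA)$; note $AB-BA$ is skew-symmetric, hence of the form $\begin{bmatrix}0 & c\\ -c & 0\end{bmatrix}$ with $\det = c^2 \geq 0$, and $c = 0$ would force $AB=BA$, contradicting irreducibility for diagonalizable matrices; then apply \cref{crossing}.)

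For the converse "crossing $\implies$ simultaneously conjugate to symmetric": crossing pairs are irreducible and real diagonalizable by the remarks following the definition of the regions. The cleanest route is to find an inner product making both $A$ and $B$ self-adjoint; equivalently, find a positive definite $Q$ with $QA$ and $QB$ symmetric, and then conjugate by $Q^{1/2}$. A symmetric operator is exactly one whose eigenspaces are orthogonal, so I want a positive definite quadratic form for which the eigendirections of $A$ are orthogonal and likewise for $B$. Using the freedom of simultaneous conjugation, first diagonalize $A$, so $A = \mathrm{diag}(a_1,a_2)$ with $a_1 \neq a_2$ and (as in the proof of \cref{crossing}) $B = \begin{bmatrix} b_1 & b_2 \\ b_3 & b_4\end{bmatrix}$ with $b_2 b_3 \neq 0$; the crossing condition gives $b_2 b_3 < 0$ (i.e. $-b_2/b_3 < 0$ from that proof). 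Now look for a diagonal positive definite $Q = \mathrm{diag}(q_1,q_2)$: then $QA$ is automatically symmetric, and $QB = \begin{bmatrix} q_1 b_1 & q_1 b_2 \\ q_2 b_3 & q_2 b_4 \end{bmatrix}$ is symmetric precisely when $q_1 b_2 = q_2 b_3$, i.e. $q_1/q_2 = b_3/b_2$. Since $b_2 b_3 < 0$... wait — that ratio is negative, so a diagonal $Q$ does not suffice; instead I allow the conjugation that diagonalizes $A$ to be chosen so that the off-diagonal entries of $B$ have the same sign (this is possible: conjugating $A=\mathrm{diag}(a_1,a_2)$ by $\mathrm{diag}(1,t)$ scales $b_2 \mapsto b_2/t$, $b_3 \mapsto t b_3$, leaving the sign of the product fixed; but conjugating by the "swap" or adjusting signs... ) — more robustly, conjugate $B$ further by a rotation to arrange $b_2 = -b_3$ after which $B$ itself is symmetric while $A$ stays diagonal. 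Concretely: since $-b_2/b_3>0$, set $t = \sqrt{-b_2/b_3} > 0$ and conjugate by $\mathrm{diag}(1,t)$; this is not orthogonal, but it keeps $A$ diagonal and sends $(b_2,b_3) \mapsto (b_2/t, t b_3)$ with $(b_2/t)(tb_3) = b_2 b_3$ unchanged, and $b_2/t = -t b_3$ iff $t^2 = -b_2/b_3$, which holds. So after this conjugation $B$ has $b_2' = -b_3'$, i.e. $B = \begin{bmatrix} b_1 & \beta \\ -\beta & b_4\end{bmatrix}$... that is skew in the off-diagonal part, not symmetric. The genuinely correct normalization: conjugate so that the product $b_2 b_3 < 0$ becomes $b_2 = -b_3$; then write $B = S + K$ with $S$ symmetric and $K$ skew; since $b_2 = -b_3$ we instead get the off-diagonal symmetric. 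Let me just state it as: choose $t>0$ with $t^2 = -b_3/b_2$ (valid since $b_2 b_3<0$ means $-b_3/b_2>0$); conjugating by $\mathrm{diag}(1,t)$ sends $b_2\mapsto b_2 t^{-1}$ and $b_3 \mapsto b_3 t$, and $b_2 t^{-1} = b_3 t \iff t^2 = b_2/b_3$ — hmm the signs. The upshot I will present: because $b_2/b_3 < 0$, exactly one of $b_2, b_3$ is positive; post-conjugating by $\mathrm{diag}(1,-1)$ if necessary flips one sign so that $b_2 b_3 > 0$ is impossible — rather, I will directly exhibit $Q = \mathrm{diag}(1, -b_3/b_2) \succ 0$ and check $QB$ symmetric, since $-b_3/b_2>0$. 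Then $QB = \begin{bmatrix} b_1 & b_2 \\ (-b_3/b_2) b_3 & \cdot\end{bmatrix}$ and the $(1,2)$ entry is $b_2$, the $(2,1)$ entry is $-b_3^2/b_2$; these are equal iff $b_2 = -b_3^2/b_2$ iff $b_2^2 = -b_3^2$, false. So scalar $Q$ on the $(2,1)$ slot is wrong; I need $Q$ acting so that $(QB)_{12} = (QB)_{21}$, and with $Q = \mathrm{diag}(q_1,q_2)$ that is $q_1 b_2 = q_2 b_3$, forcing $q_1/q_2 = b_3/b_2 < 0$ — genuinely impossible with $Q \succ 0$. Resolution: one must also rescale via a non-diagonal conjugation, OR — the slick fix — replace $A \mapsto A$, and conjugate the whole pair by $R = \mathrm{diag}(1,i)$... no, stay real. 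The honest fix: conjugate $B$ by the real matrix $P = \begin{bmatrix} 1 & 0 \\ 0 & 1\end{bmatrix}$ is trivial, so instead first replace the starting diagonalization of $A$: $A$ diagonal does not pin down the basis up to sign/permutation only — any $\mathrm{diag}(1,s)$ works — but additionally we have not used that we may conjugate by the anti-diagonal matrix swapping the axes, nor by $\mathrm{diag}(1,-1)$; neither fixes the sign of $b_2 b_3$. The real resolution is that we should not insist $A$ stays diagonal: conjugate the symmetric-ized $B$-off-diagonal situation by a rotation. I will write the final argument as: by \cref{crossing} $\det(AB-BA)>0$; since $A$ is real diagonalizable with distinct eigenvalues we diagonalize it, and a short computation shows the pair is conjugate (by a diagonal-plus-rotation matrix) into the form where both are symmetric — I will supply the two-line explicit conjugator in the writeup and verify it.

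I expect the main obstacle to be exactly this last sign bookkeeping: producing an explicit positive-definite form (equivalently an explicit real conjugator built from a diagonal scaling and a rotation) that simultaneously symmetrizes $A$ and $B$ once we know $b_2 b_3 < 0$, and checking it is positive definite. Everything else — the geometric linking argument, the use of \cref{crossing}, the skew-symmetry remark for the easy direction, and the invocation of irreducibility to rule out degenerate coincidences — is routine. An alternative that sidesteps the explicit conjugator: invoke that for an irreducible pair there is a unique (up to scale) $SL_2(\mathbb{R})$-invariant bilinear form only in special cases, so instead I will likely present the clean version: "crossing" means the eigendirections $x_A,y_A$ separate $x_B,y_B$ on $\mathbb{RP}^1$; pick the unique (up to positive scalar) symmetric bilinear form $Q$ whose null directions are... no — pick $Q$ with signature $(1,1)$ whose isotropic lines are $\{x_A,y_A\}$; then $x_B,y_B$ are automatically $Q$-orthogonal (a separating pair is conjugate in this form), but we want positive definite, so instead use that after normalizing $x_A,y_A$ to $\{0,\infty\}$ and using the separation to put $x_B, y_B$ symmetric about the imaginary axis, conjugation by the Cayley-type real matrix taking $\{0,\infty\}$ to $\{\pm i\}$... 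This is getting long; in the paper I will commit to the explicit-conjugator computation, state the conjugator, and verify symmetry and positive-definiteness directly.
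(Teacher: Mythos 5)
The backward direction of your argument is fine: the observation that $AB-BA$ is skew-symmetric when $A,B$ are symmetric, so $\det(AB-BA)=c^2$ with $c\neq 0$ by irreducibility, combined with \cref{crossing}, is a clean algebraic route (the paper instead argues geometrically that two pairs of mutually orthogonal eigendirections either coincide or interleave, which is your "linking" argument). The problem is the forward direction, which you never actually complete --- the writeup ends with a promise to "supply the explicit conjugator" --- and the obstruction you keep running into is an artifact of a sign error. From the proof of \cref{crossing}, the product of the fixed points of $f_B$ is $-b_2/b_3$, and crossing means this product is \emph{negative}, i.e.\ $-b_2/b_3<0$, which gives $b_2/b_3>0$, i.e.\ $b_2b_3>0$ --- not $b_2b_3<0$ as you wrote. (This is also consistent with $\det(AB-BA)=b_2b_3(a_1-a_2)^2>0$.) With the correct sign the difficulty evaporates: conjugating by $D=\mathrm{diag}(1,t)$ sends $(b_2,b_3)\mapsto(b_2/t,\,tb_3)$, and the symmetry condition $b_2/t=tb_3$ reads $t^2=b_2/b_3>0$, so $t=\sqrt{b_2/b_3}$ is a genuine real diagonal conjugator that symmetrizes $B$ while keeping $A$ diagonal. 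Equivalently, in your positive-definite-form language, $q_1/q_2=b_3/b_2>0$ is achievable.

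Once the sign is fixed, your approach coincides with the paper's: the paper also diagonalizes $A$ and then conjugates by a diagonal matrix $\mathrm{diag}(1/\lambda,\lambda)$, locating the right $\lambda$ by an intermediate value argument on the angle between the two eigendirections of $B$ (crossing forces the angle to sweep from $0$ to $\pi$, so it passes through $\pi/2$), whereas you would exhibit $t$ explicitly. Either is acceptable; but as written, your forward direction is a gap, since you concluded the diagonal conjugator was "genuinely impossible" and then abandoned the computation without a replacement.
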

\begin{proof}
We first prove the forward direction. We may assume $A$ is a diagonal matrix by appropriately conjugating both matrices. Thus, the eigendirections of $A$, $L_1,L_3$ are simply the $x$ and $y$ axis respectively. Denote $L_2,L_4$ as the eigendirections of $B$. Then by assumption of crossing we know that these lines are cyclically ordered $L_1,L_2,L_3,L_4$. Now consider conjugating both matrices $A,B$ by
$D=\begin{bmatrix}
\frac{1}{\lambda} & 0\\
0& \lambda 
\end{bmatrix}$, it leaves $L_1,L_3$ invariant while rotating the lines $L_2,L_4$. Now the angle between $L_2$ and $L_4$, denoted $\angle(D(L_2),D(L_4))$, changes continuously with $\lambda$. If $\lambda\to 0$ then $\angle(D(L_2),D(L_4))\to 0$  and if $\lambda\to \infty$ then $\angle(D(L_2),D(L_4))\to \pi$. Thus by intermediate value theorem there exists $\lambda$ for which the angle is $\frac{\pi}{2}$. Therefore by conjugating using this $\lambda$, $DAD^{-1}$ is still diagonal while $DBD^{-1}$ has orthogonal eigendirections. Thus, both are symmetric.

For the backwards directions, we assume $(A,B)$ is an irreducible pair of symmetric matrices. Thus the eigendirections of $A$ (resp. $B$) are orthogonal. The only way $(A,B)$ is not crossing, is if $A$ and $B$ have the same eigendirections, but that would mean they are reducible. Thus $(A,B)$ must be crossing.
\end{proof}
\begin{note}
It is not hard to explicitly exhibit the symmetric matrices. If $(A,B)$ is crossing and has the $5$-tuple $(x,y,z,u,v)$,  then the pair is simultaneously conjugate to:
 \begin{align}
A^{\text{sym}}&=
\begin{bmatrix}
 \frac{1}{2} \left(x+\sqrt{x^2-4 u}\right) & 0 \\
 0 & \frac{1}{2} \left(x-\sqrt{x^2-4 u}\right) \\
\end{bmatrix}\\
B^{\text{sym}}&=
\begin{bmatrix}
 \frac{y \sqrt{x    ^2-4 u}-x y+2 z}{2 \sqrt{x^2-4 u}} & \sqrt{\frac{4 u v-u y^2-v x^2+x y z-z^2}{x^2-4 u}} \\
\sqrt{\frac{4 u v-u y^2-v x^2+x y z-z^2}{x^2-4 u}} & \frac{y \sqrt{x^2-4
   u}+xy-2 z}{2 \sqrt{x^2-4 u}} \\
\end{bmatrix}
\end{align}
\end{note}
From this, we obtain the first part of \cref{intro thm}:
\begin{corollary}\label{crossing smps}
If $(A,B) \in \mathcal{R}_{cross}$, then an SMP exists and all SMPs are either $A$ or $B$.
\end{corollary}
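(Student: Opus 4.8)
The plan is to reduce to the symmetric case and then pin down $\JSR$ by sandwiching it between the three member inequalities \eqref{three member} computed in the Euclidean operator norm. Set $\mathcal{A}=\{A,B\}$. By \cref{symmetrizing} a crossing pair is simultaneously conjugate to a pair of irreducible symmetric matrices, and since $\JSR$, the spectral radii $\rho(A),\rho(B)$, and the property of being an SMP are all invariant under simultaneous conjugation, I may assume from the start that $A$ and $B$ are symmetric. The reason for doing so is that for a symmetric matrix $M$ the operator $2$-norm coincides with the spectral radius, $\|M\|_2=\rho(M)$, and moreover $\|M^k\|_2=\|M\|_2^k=\rho(M)^k$ for all $k$; this is exactly what will make the upper and lower bounds in \eqref{three member} meet, leaving no limiting gap to control.

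Next I would estimate $\sup_{\Pi\in\mathcal{A}^{*k}}\|\Pi\|_2$, a maximum over the $2^k$ products of length $k$. The two ``pure'' products $A^k$ and $B^k$ have $2$-norm $\rho(A)^k$ and $\rho(B)^k$ respectively. Every other product of length $k$ lies in some $W(a,b)$ with $a,b\ge1$ and $a+b=k$, and for those \cref{strict submult} gives $\|\Pi\|_2<\|A\|_2^a\|B\|_2^b\le\max(\rho(A),\rho(B))^k$. Hence $\sup_{\Pi\in\mathcal{A}^{*k}}\|\Pi\|_2=\max(\rho(A),\rho(B))^k$; taking $k$-th roots and letting $k\to\infty$ in \eqref{three member} forces
\[
\JSR(\mathcal{A})=\max\bigl(\rho(A),\rho(B)\bigr).
\]
Since this maximum equals $\rho(A)^{1/1}$ or $\rho(B)^{1/1}$, it is attained by a length-one product, so an SMP exists.

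For the classification, let $\Pi$ be a primitive product of length $k$ with $\rho(\Pi)^{1/k}=\JSR(\mathcal{A})$. If $\Pi$ uses both letters then $\Pi\in W(a,b)$ for some $a,b\ge1$, so $\rho(\Pi)\le\|\Pi\|_2<\max(\rho(A),\rho(B))^k=\JSR(\mathcal{A})^k$ by \cref{strict submult}, contradicting that $\Pi$ is an SMP. Thus $\Pi$ involves only one letter, i.e. $\Pi=A^k$ or $\Pi=B^k$; but a primitive product of length $k\ge2$ cannot be a proper power, so $k=1$ and $\Pi\in\{A,B\}$. Transporting back via the conjugacy of \cref{symmetrizing} completes the proof.

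I do not anticipate a genuine obstacle: the real content is already packaged in \cref{symmetrizing} and \cref{strict submult}. The only points requiring care are the norm identity $\|M\|_2=\rho(M)=\rho(M^k)^{1/k}$ for symmetric $M$ (so that the three member bounds coincide in the limit rather than merely squeezing), and the bookkeeping that the definition of SMP builds in primitivity, which is what excludes $A^k$ and $B^k$ for $k\ge2$.
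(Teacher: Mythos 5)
Your proof is correct and follows essentially the same route as the paper: reduce to the symmetric case via \cref{symmetrizing}, use the coincidence $\|M\|_2=\rho(M)$ for symmetric matrices to close the gap in \eqref{three member}, and invoke \cref{strict submult} to rule out mixed products. The only cosmetic difference is that the paper applies \eqref{three member} directly at $k=1$ rather than computing the supremum over all length-$k$ products and passing to the limit.
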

\begin{proof}
 The joint spectral radius and SMPs are invariant under simultaneous conjugation. Thus, by \cref{symmetrizing} we can assume $A,B$ are both symmetric. By \cref{three member} for $k=1$ and Euclidean norm $\|\cdot\|_2$ we know that:
\[\max\{\rho(A),\rho(B)\}\leq \JSR(A,B)\leq \max(\|A\|_2,\|B\|_2)\]
However, since $A,B$ are symmetric  $\rho(A)=\|A\|_2, \rho(B)=\|B\|_2$. Therefore the inequalities become equalities:
\[\max\{\rho(A),\rho(B)\}=\JSR(A,B)=\max(\|A\|_2,\|B\|_2)\]
This proves that either $A$ or $B$ is an SMP.

We now prove there can be no other SMPs. Let $\Pi \in W(a,b)$ be a primitive product with $a,b \geq 1$. Then we get the following chain of inequalities:
\[\rho(\Pi)\leq \|\Pi\|_2<\|A\|_2^a\|B\|_2^b=\rho(A)^a\rho(B)^b\leq \max\{\rho(A)^{a+b},\rho(B)^{a+b}\}\]
The second inequality must be strict by \cref{strict submult}. Thus $\Pi$ cannot be an SMP.
\end{proof}
\begin{note}
This corollary is not completely new. Panti and Sclosa \cite{Panti} proved a related result through different methods. 
\end{note}

\section{What are the SMPs if $(A,B) \in \mathcal{R}_{mix}$?}
The region $\mathcal{R}_{mix}$ is the region where the matrices $A,B$ have opposite signs of the determinants or if either matrix is non-invertible. We remind the reader that if a real matrix has a negative determinant, it must be real-diagonalizable. This fact will be implicitly used throughout the next sections. We first must deal with the case when $\JSR(A,B)=0$. (The lemma below remains true in any dimension, see \cite[Lemma 2.2]{Jungers}),
\begin{lemma}\label{jsr zero}
Let $A,B \in M_2(\mathbb{R})$ such that $\JSR(A,B)=0$, then $A,B$ are reducible.
\end{lemma}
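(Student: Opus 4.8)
The plan is to show that $\JSR(A,B) = 0$ forces a common invariant line. The key observation is that $\JSR(A,B) = 0$ means every long product is small; in particular, applying the Berger--Wang theorem (\cref{e.BW}), we get $\rho(\Pi) = 0$ for every product $\Pi \in \mathcal{A}^{*k}$ and every $k$, i.e.\ every product of $A$'s and $B$'s is nilpotent. In dimension two a nonzero nilpotent matrix has rank one, with its image contained in its kernel; the zero matrix we can treat separately (if $A = 0$ or $B = 0$ the pair is trivially reducible, or we reduce to the one-matrix case).

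First I would record that $A$ and $B$ are each nilpotent, so each is either $0$ or rank one with $\operatorname{im} = \ker$, a single line ($\ell_A$ for $A$, $\ell_B$ for $B$). If $\ell_A = \ell_B$ we are done: that common line is invariant under both, so $A,B$ are simultaneously triangularizable. So assume $\ell_A \neq \ell_B$; then $\{\ell_A, \ell_B\}$ is a basis of $\mathbb{R}^2$ and I want to derive a contradiction. The natural move is to compute $AB$ in a basis adapted to these lines. Writing $v_A$ for a spanning vector of $\ell_A = \operatorname{im}A = \ker A$ and $v_B$ similarly, in the basis $(v_A, v_B)$ the matrix $A$ sends $v_A \mapsto 0$ and $v_B \mapsto \alpha v_A$ for some scalar $\alpha$, and $B$ sends $v_B \mapsto 0$, $v_A \mapsto \beta v_B$. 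Since $A, B \neq 0$ we have $\alpha, \beta \neq 0$. Then $AB$ sends $v_A \mapsto \alpha\beta\, v_A$, so $\rho(AB) \geq |\alpha\beta| > 0$, contradicting that $AB$ is nilpotent.

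The main obstacle — really the only place any care is needed — is the degenerate bookkeeping: handling the cases where one of the matrices is the zero matrix (so that "$\operatorname{im} = \ker$" is vacuous), and making sure the basis $(v_A, v_B)$ argument is set up with the correct identifications $\operatorname{im}A = \ker A$ in dimension two, which is exactly what makes $AB(v_A) = A(\beta v_B) = \beta \alpha v_A$ nonzero. Everything else is immediate from Berger--Wang plus the structure of $2\times 2$ nilpotents. I expect the written proof to be only a few lines: reduce to both matrices nilpotent via \cref{e.BW}, dispatch the zero cases, and then either the kernels coincide (reducible) or the explicit $2\times 2$ computation shows $AB$ is not nilpotent, a contradiction.
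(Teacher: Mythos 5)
Your proof is correct, but it takes a genuinely different route from the paper. The paper observes that $\rho(A)=\rho(B)=\rho(AB)=0$ forces $\tr A=\tr B=\tr AB=\det A=\det B=0$, so the reducibility polynomial $4uv-uy^2-vx^2+xyz-z^2$ of \cref{friedland} vanishes and reducibility follows immediately from the machinery already set up; this makes the proof three lines and fits the paper's theme of arguing via the $5$-tuple. You instead argue directly with the structure of $2\times 2$ nilpotents: each nonzero nilpotent has $\operatorname{im}=\ker$ a single line, and if the two lines differ then the computation $ABv_A=\alpha\beta v_A$ with $\alpha\beta\neq 0$ contradicts nilpotency of $AB$. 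This is elementary and self-contained (and in fact exhibits the common invariant line explicitly, rather than inferring reducibility abstractly from Friedland), at the cost of some case bookkeeping for the zero matrix. One small remark: you do not need Berger--Wang here --- the lower bound in the three members inequalities \eqref{three member} already gives $\rho(\Pi)=0$ for every product, and indeed both your argument and the paper's only use the nilpotency of $A$, $B$, and $AB$.
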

\begin{proof}
Since $\JSR(A,B)=0$, we know by \cref{three member} that:
\[\rho(A)=\rho(B)=\rho(AB)=0\]
However, that means all the eigenvalues of $A,B,AB$ are $0$ and so:
\[\tr(A)=\tr(B)=\tr(AB)=\det(A)=\det(B)=0.\]
Hence $(A,B)$ must be reducible by \cref{friedland}.
\end{proof}
The above lemma will allow us to always re-scale our matrices $A,B$ so that $\JSR(A,B)=1$. We now prove a few auxiliary lemmas that will be used in the main theorems. 
\begin{lemma}\label{cool}
For any $2\times 2$ matrices $A,B$ the following equation holds
\[\det{(A+B)}+\tr{AB}=\det{A}+\det{B}+\tr{A}\tr{B}\]
\end{lemma}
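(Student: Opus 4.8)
The identity in \cref{cool} is a polynomial identity in the eight entries of $A$ and $B$, so in principle it can be verified by a brute-force expansion; the plan is to organize that expansion so that it becomes transparent rather than tedious.

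\textbf{Approach via bilinearity of the $2\times 2$ determinant.} Write $A = \begin{bmatrix} a_1 & a_2 \\ a_3 & a_4 \end{bmatrix}$ and $B = \begin{bmatrix} b_1 & b_2 \\ b_3 & b_4 \end{bmatrix}$. The key structural observation is that $\det$, as a function of the rows of a $2\times2$ matrix, is bilinear and alternating. Writing $A+B$ in terms of its rows and expanding $\det(A+B)$ by bilinearity in each row gives
\[
\det(A+B) = \det(A) + \det(B) + \det\begin{bmatrix} a_1 & a_2 \\ b_3 & b_4 \end{bmatrix} + \det\begin{bmatrix} b_1 & b_2 \\ a_3 & a_4 \end{bmatrix}.
\]
So the content of the lemma reduces to identifying the two cross terms:
\[
\det\begin{bmatrix} a_1 & a_2 \\ b_3 & b_4 \end{bmatrix} + \det\begin{bmatrix} b_1 & b_2 \\ a_3 & a_4 \end{bmatrix} = \tr(A)\tr(B) - \tr(AB).
\]

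\textbf{Verifying the cross-term identity.} The left-hand side is $(a_1 b_4 - a_2 b_3) + (b_1 a_4 - b_2 a_3) = a_1 b_4 + a_4 b_1 - a_2 b_3 - a_3 b_2$. On the other side, $\tr(A)\tr(B) = (a_1+a_4)(b_1+b_4) = a_1 b_1 + a_1 b_4 + a_4 b_1 + a_4 b_4$, while $\tr(AB) = (AB)_{11} + (AB)_{22} = (a_1 b_1 + a_2 b_3) + (a_3 b_2 + a_4 b_4)$. Subtracting, $\tr(A)\tr(B) - \tr(AB) = a_1 b_4 + a_4 b_1 - a_2 b_3 - a_3 b_2$, which matches. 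Rearranging gives exactly $\det(A+B) + \tr(AB) = \det(A) + \det(B) + \tr(A)\tr(B)$, as claimed.

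\textbf{Remarks on obstacles.} There is no real obstacle here: the only thing to be careful about is bookkeeping signs in the $2\times2$ determinant expansions and making sure the two cross-determinants are assembled with the correct rows. A slicker but essentially equivalent route is to invoke the Cayley--Hamilton identity $C^2 - \tr(C)\,C + \det(C)\,I = 0$ for $C = A+B$, take traces, and use $\tr((A+B)^2) = \tr(A^2) + 2\tr(AB) + \tr(B^2)$ together with $\det C = \tfrac12(\tr(C)^2 - \tr(C^2))$; this reproduces the same identity after collecting the $\det A$, $\det B$ terms via Cayley--Hamilton applied to $A$ and to $B$ separately. Either way the computation is short, so I would simply present the bilinearity argument above.
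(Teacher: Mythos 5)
Your proof is correct. The main argument you give — expanding $\det(A+B)$ by bilinearity in the rows and identifying the two cross-determinants with $\tr(A)\tr(B)-\tr(AB)$ — is a genuinely different route from the paper's, which instead takes the trace of the Cayley--Hamilton equation to get $\tr(C^2)=\tr(C)^2-2\det C$, applies this to $A$, $B$, and $A+B$, and uses linearity of the trace together with $\tr\big((A+B)^2\big)=\tr(A^2)+2\tr(AB)+\tr(B^2)$. You actually sketch exactly this Cayley--Hamilton route in your closing remark, so you have both proofs in hand. The trade-off is minor: the paper's argument is coordinate-free and makes clear that the identity is a shadow of the characteristic polynomial being quadratic in dimension two, whereas your bilinearity expansion is more elementary and makes the cross-term $\tr(A)\tr(B)-\tr(AB)$ appear as an explicit "mixed determinant," which some readers may find more transparent. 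Either version is complete and correct; the computations you display (the two $2\times 2$ cross-determinants and the entries of $AB$) all check out.
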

\begin{proof}
Taking the trace of the Cayley Hamilton equation we obtain the identity:
\[\tr{(A^2)}=\tr({A})^2-2\det{A}\]
Applying the same identity to the matrices $B$ and $A+B$, using linearity of trace and some algebra, we obtain the desired formula.
\end{proof}
\begin{corollary}\label{multiplicative trace}
Let $X,Y,Z \in M_2(\mathbb{R})$ and $\det(Z)=0$, then
\[\tr{(XZYZ)}=\tr(XZ)\tr(YZ)\]
\end{corollary}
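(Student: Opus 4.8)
\textbf{Proof plan for \cref{multiplicative trace}.}

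The natural approach is to reduce to \cref{cool} by a clever substitution. I want to produce the identity $\tr(XZYZ) = \tr(XZ)\tr(YZ)$ for arbitrary $2\times 2$ matrices $X,Y$ when $\det(Z) = 0$. Since \cref{cool} relates $\det(M+N)$, $\tr(MN)$, $\det M$, $\det N$, and $\tr M\tr N$, the idea is to apply it with $M = XZ$ and $N = YZ$: this gives $\det(XZ + YZ) + \tr(XZYZ) = \det(XZ) + \det(YZ) + \tr(XZ)\tr(YZ)$. Now $\det(XZ) = \det(X)\det(Z) = 0$ and similarly $\det(YZ) = 0$, and $\det(XZ+YZ) = \det((X+Y)Z) = \det(X+Y)\det(Z) = 0$. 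So all three determinant terms vanish and the identity $\tr(XZYZ) = \tr(XZ)\tr(YZ)$ falls out immediately.

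The one thing to check is that $\det Z = 0$ really kills all three determinant terms — this uses multiplicativity of the determinant, $\det(PQ) = \det(P)\det(Q)$, which holds for all square matrices, so there is no subtlety. I do not anticipate any genuine obstacle here; the only "trick" is recognizing that \cref{cool} applied to the products $XZ$ and $YZ$ is exactly what is needed, and that the hypothesis $\det Z = 0$ is precisely what makes the three determinant terms disappear. I would write this as a three-line computation citing \cref{cool}.

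If one wanted a self-contained alternative not routed through \cref{cool}, one could instead use the Cayley–Hamilton identity for $Z$: since $\det Z = 0$, Cayley–Hamilton gives $Z^2 = \tr(Z)\,Z$, and then $\tr(XZYZ)$ can be manipulated using $\tr(Z\cdot W) $ identities; but this is messier and the route via \cref{cool} is cleaner, so that is the one I would present.
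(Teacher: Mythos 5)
Your proposal is correct and is exactly the paper's proof: apply \cref{cool} with $A=XZ$, $B=YZ$ and observe that all three determinant terms vanish because $\det Z=0$. No gaps; the paper's version is just slightly terser in justifying the vanishing of the determinants.
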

\begin{proof}
Setting $A=XZ$ and $B=YZ$ in \cref{cool} we get:
\[\det{(XZ+YZ)}+\tr{(XZYZ)}=\det{XZ}+\det{YZ}+\tr{(XZ)}\tr{(YZ)}\]
However, since $\det(Z)=0$, we get the desired equality.
\end{proof}
\begin{lemma}\label{seq}
Let $a_n,b_n$ be positive sequences such that 
\[a_n\leq b_n  \text{  and  } b_n \to b\]
Then $\sup_{n\geq 1}\{a_n,b\}$ is attained.
\end{lemma}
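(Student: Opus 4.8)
The plan is to split according to whether the supremum in question equals $b$ or strictly exceeds it. Write $S \coloneqq \sup\bigl(\{a_n : n \geq 1\} \cup \{b\}\bigr)$ for the quantity we must show is attained. If $S = b$, then (since $a_n \le b_n$ forces, in this case, $a_n \le b$ for all $n$) the value $S$ is realized by the term $b$, and there is nothing more to do.

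In the remaining case $S > b$, I would exploit the convergence $b_n \to b$ to confine the ``large'' behaviour of the $a_n$ to finitely many indices. Fix $\varepsilon > 0$ with $b + \varepsilon < S$; convergence then yields an index $N$ with $b_n < b + \varepsilon$ for every $n \geq N$, and the hypothesis $a_n \le b_n$ upgrades this to $a_n < b + \varepsilon < S$ for all $n \geq N$. Consequently every element of the set $\{a_n : n \geq 1\} \cup \{b\}$ other than possibly $a_1, \dots, a_{N-1}$ is bounded above by $b + \varepsilon$, which is strictly less than $S$. Since $S$ is the supremum of the whole set, it must therefore be realized within the finite subfamily, i.e. $S = \max_{1 \le n \le N-1} a_n$, which is attained; and for the index $m$ achieving this maximum one has $\max(a_m, b) = S$ since $a_m = S > b$.

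The only points needing a little care lie in the second case: one checks the finite index set $\{1, \dots, N-1\}$ is non-empty (were it empty, every element of the set would be $\le b + \varepsilon < S$, contradicting the definition of $S$) and that its maximum is exactly $S$ rather than something smaller (it cannot be smaller, because the overall supremum is $S$ while all the remaining elements are $\le b + \varepsilon < S$). There is no substantive obstacle here; the lemma is elementary, and its purpose is simply to record that a supremum taken over a sequence dominated by a convergent sequence, together with the limit of that sequence, is always a maximum.
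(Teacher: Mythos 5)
Your argument is correct, but it takes a different elementary route than the paper. The paper splits on whether $a \coloneqq \sup_n a_n$ is itself attained: if not, it extracts a subsequence $a_{n_i}\to a$, passes to the limit in $a_{n_i}\le b_{n_i}$ to get $a\le b$, and concludes that $b$ realizes the supremum. You instead split on whether $S=\sup(\{a_n\}\cup\{b\})$ equals or exceeds $b$, and in the latter case use the convergence $b_n\to b$ to show all but finitely many $a_n$ are bounded away from $S$, so the supremum is a maximum over a nonempty finite index set. Your version avoids subsequence extraction entirely and makes the ``finitely many large terms'' mechanism explicit; the paper's is marginally shorter. One small blemish in your first case: the parenthetical claim that $a_n\le b_n$ ``forces'' $a_n\le b$ is not a valid inference (take $b_n=b+1/n$ and $a_n=b_n$), but it is also unnecessary --- when $S=b$ the element $b$ belongs to the set and equals its supremum, full stop. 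This does not affect the correctness of the proof.
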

\begin{proof}
Since $(b_n)$ is is bounded, so is $(a_n)$. If $a:=\sup_{n\geq 1}\{a_n\}$ is attained the lemma is obvious. If it is not attained, then there exists a sub-sequence such that $a_{n_i} \to a$. Since $a_n\leq b_n$, we get $a\leq b$. In this case, $b$ would attain the supremum.
\end{proof}
\begin{lemma}\label{gelfandv2}
For any $n\in \mathbb{N}$, if $A,B \in M_n(\mathbb{C})$, the supremum:
\[\sup_{n \geq 0}\left\{\rho(A^nB)^{\frac{1}{n+1}},\rho(A)\right\}\]
is attained.
\end{lemma}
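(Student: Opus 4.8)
The plan is to reduce to the case $\JSR(A,B)=1$ and then exploit the growth comparison provided by \cref{seq}. First I would dispense with degenerate situations: if $\rho(A)\ge 1$ after rescaling, or more precisely if $\rho(A)=\JSR(A,B)$, then the supremum is attained at $n=0$ (or in the limit the $\rho(A)$ term dominates), so we may assume $\rho(A)<\JSR(A,B)$. After rescaling both matrices we may assume $\JSR(A,B)=1$, so that $\rho(A^nB)^{1/(n+1)}\le 1$ for every $n$ by the three members inequality \eqref{three member}, and also $\rho(A)<1$.

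Next I would set $a_n\coloneqq \rho(A^{n-1}B)^{1/n}$ for $n\ge 1$ (reindexing so the exponent matches the word length $n$) and try to bound $a_n$ by an explicit convergent sequence $b_n$ with $a_n\le b_n$, in order to invoke \cref{seq}. The natural choice is to use sub-multiplicativity in some norm: $\rho(A^{n-1}B)\le \|A^{n-1}B\|\le \|A\|^{n-1}\|B\|$, which gives $a_n\le \|A\|^{(n-1)/n}\|B\|^{1/n}$, and this converges to $\|A\|$. But $\|A\|$ need not equal $\rho(A)<1$, so this crude bound only shows the sequence $a_n$ is bounded, not that its $\limsup$ is small enough to be dominated. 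To fix this I would instead use Gelfand's formula: since $\rho(A)<1$, pick a norm $\|\cdot\|_*$ (for instance an extremal-type or averaged norm, or simply $\|\cdot\|_* \coloneqq \sum_{k\ge 0}\|A^k \,\cdot\,\|/r^k$ for a suitable $r$ with $\rho(A)<r<1$) in which $\|A\|_*\le r<1$. Then $a_n\le \|A\|_*^{(n-1)/n}\|B\|_*^{1/n}\to \|A\|_*\le r<1$, so setting $b_n$ to be this right-hand side (a convergent positive sequence) and noting $a_n\le b_n$, \cref{seq} shows $\sup_{n\ge 1}\{a_n,\,b\}$ is attained where $b=\lim b_n \le r$. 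Since $b<1=\JSR$ and since $\rho(A)<1$ as well, we may enlarge the sup to include $\rho(A)$ without changing attainment (the extra term is just another value $<1$, and a finite max of attained-or-bounded-below-$1$ quantities is still attained); this yields exactly the claim that $\sup_{n\ge 0}\{\rho(A^nB)^{1/(n+1)},\rho(A)\}$ is attained.

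The main obstacle is the middle step: producing a norm in which $A$ is a strict contraction, i.e. correctly invoking the converse direction of Gelfand's formula so that the comparison sequence $b_n$ genuinely tends to a limit strictly below $\JSR(A,B)$. Once that norm is in hand, everything else is bookkeeping: matching indices between "length $n+1$ words $A^nB$" and the hypotheses of \cref{seq}, checking positivity of the sequences (if some $\rho(A^nB)=0$ those terms are harmless), and absorbing the isolated term $\rho(A)$ into the supremum. One subtlety to handle carefully is the borderline case $\rho(A)=\JSR(A,B)$: there the $\rho(A)$ term alone attains the supremum, so the statement is trivially true and should be separated out at the very start, before any rescaling argument that assumes $\rho(A)<1$.
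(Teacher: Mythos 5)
There is a genuine gap in the middle step, and it is precisely at the point you flag as "bookkeeping." \cref{seq} concludes that $\sup_n\{a_n,b\}$ is attained where $b=\lim b_n$; for this to yield the lemma, $b$ must itself be an element of the set you are taking the supremum of, i.e.\ you need $b=\rho(A)$ (or at least $b\le\rho(A)$). With your choice of an adapted norm you only get $b=\|A\|_*$ with $\rho(A)\le\|A\|_*\le r$, and in general $\|A\|_*>\rho(A)$ strictly (for a non-diagonalizable $A$, no operator norm satisfies $\|A\|_*=\rho(A)$, since that would force $\|A^n\|_*\le\rho(A)^n$). So in the bad case where $\sup_n a_n$ is not attained, your argument only shows $\sup_n a_n\le\|A\|_*$; the supremum of $\{a_n\}\cup\{\rho(A)\}$ could then a priori be a value in $(\rho(A),\|A\|_*]$ that is attained by no term, and "enlarging the sup to include $\rho(A)$" does not rescue attainment. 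What is actually needed is the sharper fact $\limsup_n\rho(A^nB)^{1/(n+1)}\le\rho(A)$.

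The paper gets this directly by applying Gelfand's formula to the powers of $A$ rather than to a single adapted norm: from $\rho(A^nB)^{1/(n+1)}\le\|A^n\|^{1/(n+1)}\|B\|^{1/(n+1)}$ and $\|A^n\|^{1/n}\to\rho(A)$, the comparison sequence converges to exactly $\rho(A)$, and \cref{seq} applies with $b=\rho(A)$. Your argument can be repaired in the same spirit without changing norms: for every $\varepsilon>0$ choose an adapted norm with $\|A\|_*\le\rho(A)+\varepsilon$ to conclude $\limsup_n a_n\le\rho(A)+\varepsilon$, then let $\varepsilon\to0$. Also note that the opening reduction via $\JSR$ and rescaling is unnecessary (the lemma is a statement about a single matrix sequence and never mentions the $\JSR$), and the case $\rho(A)=\JSR(A,B)$ is not "attained at $n=0$" (that term is $\rho(B)$) but rather by the separate $\rho(A)$ entry.
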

\begin{proof}
Note the following inequalities:
\begin{equation}
    0\leq \rho(A^nB)^\frac{1}{n+1}\leq \|A^nB\|^\frac{1}{n+1}\leq \|A^n\|^\frac{1}{n+1}\|B\|^\frac{1}{n+1}.
\end{equation}
By Gelfand's formula (as long as $B$ is not the zero matrix, which would make the lemma trivial):
\[\|A^n\|^\frac{1}{n+1}\|B\|^\frac{1}{n+1} \to \rho(A).\]
Thus setting $a_n=\rho(A^nB)^\frac{1}{n+1}, b_n=\|A^n\|^\frac{1}{n+1}\|B\|^\frac{1}{n+1}, b=\rho(A)$ and applying \cref{seq} we see that the supremum is attained.
\end{proof}
We now begin the proof of the 2\textsuperscript{nd} part of \cref{intro thm}, starting with the case where one of the matrices is not invertible. We note that the following lemma overlaps with \cite[Theorem 2.1]{dai}. In particular, our lemma is a special case of their result, but with a slightly more precise conclusion.
\begin{lemma}\label{non-inv}
Let $(A,B)$ be irreducible such that $\det(B)=0$, then there is an SMP of the form $A,B,A^nB$ where $n \in \mathbb{N}$.
\end{lemma}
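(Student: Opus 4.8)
The plan is to bound the growth of every product by the only candidate quantities $\rho(A^nB)^{1/(n+1)}$ and $\rho(A)$, and then to cash in \cref{gelfandv2} together with Berger--Wang. The starting observation is that, since $A,B$ are irreducible, $B$ cannot be the zero matrix, so $\det B=0$ forces $\operatorname{rank}(B)=1$; in particular \emph{every} product $\Pi$ in the letters $A,B$ that contains at least one $B$ is singular, and therefore $\rho(\Pi)=|\tr\Pi|$ (a singular $2\times2$ matrix has eigenvalues $0$ and $\tr$).

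The combinatorial core is a trace factorization. Given a product $\Pi$ containing at least one $B$, I would rotate it cyclically — which changes neither $\rho(\Pi)$ nor its length — into the block form $\Pi=A^{n_1}BA^{n_2}B\cdots A^{n_k}B$ with $k\ge1$ and each $n_i\ge0$. Applying \cref{multiplicative trace} with $Z=B$ repeatedly (induction on $k$, splitting off the first block $A^{n_1}B$ each time) gives $\tr\Pi=\prod_{i=1}^k\tr(A^{n_i}B)$, and since each $A^{n_i}B$ is again singular we get $\rho(\Pi)=\prod_{i=1}^k\rho(A^{n_i}B)$. Because $|\Pi|=\sum_{i=1}^k(n_i+1)$, the number $\rho(\Pi)^{1/|\Pi|}$ is a weighted geometric mean of the numbers $\rho(A^{n_i}B)^{1/(n_i+1)}$ with weights $(n_i+1)/|\Pi|$ summing to $1$, whence
\[
\rho(\Pi)^{1/|\Pi|}\;\le\;\max_{1\le i\le k}\rho(A^{n_i}B)^{1/(n_i+1)}\;\le\;\sup_{n\ge0}\bigl\{\rho(A^{n}B)^{1/(n+1)},\,\rho(A)\bigr\}.
\]
Products with no $B$ at all are powers $A^m$, for which $\rho(\Pi)^{1/|\Pi|}=\rho(A)$, so the same bound holds for every product.

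Taking a $\limsup$ over lengths and invoking \cref{e.BW} then yields $\JSR(A,B)\le\sup_{n\ge0}\{\rho(A^nB)^{1/(n+1)},\rho(A)\}$, while the reverse inequality is immediate from \cref{three member} applied to the products $A^nB$ and $A$; hence there is equality. Finally \cref{gelfandv2} asserts that this supremum is \emph{attained}: either by $\rho(A)$, so that $A$ is an SMP, or by $\rho(A^NB)^{1/(N+1)}$ for some $N\ge0$, so that $A^NB$ is an SMP — this is $B$ when $N=0$ and a genuinely mixed product $A^nB$ when $N\ge1$. This is precisely what the statement asserts.

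I do not anticipate a genuine obstacle: \cref{multiplicative trace} and \cref{gelfandv2} are exactly the two ingredients this argument needs, and they have already been prepared. The only care required is routine bookkeeping — checking that the cyclic block decomposition accounts for every product, noting that a vanishing factor $\tr(A^{n_i}B)=0$ merely makes $\rho(\Pi)=0$ and hence the bound trivial, treating the degenerate block $k=1$, $n_1=0$ (i.e. $\Pi=B$), and reading "$A^nB$" in the conclusion as including the case $n=0$.
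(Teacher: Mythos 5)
Your proof is correct and follows essentially the same route as the paper's: decompose any product into blocks $A^{n_i}B$, factor the trace via \cref{multiplicative trace}, bound $\rho(\Pi)^{1/|\Pi|}$ by $\sup_n\{\rho(A^nB)^{1/(n+1)},\rho(A)\}$, and combine \cref{gelfandv2} with Berger--Wang. The only cosmetic difference is that you handle blocks with $n_i=0$ directly inside the trace factorization, whereas the paper first collapses powers of $B$ via Cayley--Hamilton; both are valid.
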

\begin{proof}
By \cref{jsr zero} we may normalize $A,B$ so that $\JSR(A,B)=1$. Any product (up to cyclic permutations) other than powers of $A$ can be written as 
\[\Pi=A^{k_1'}B\cdots A^{k_n'}B \]
with $k_i'\geq0$.
By Cayley-Hamilton $B^n=(\tr{B})^{n-1}B$. If $k_{i}'=0$ for some $i$, then $\Pi$ has a power of $B$  which can be reduced by pulling out $\tr{B}$.  Therefore, $\Pi$ can be written as
\[\Pi=(\tr{B})^k(A^{k_1}B\cdots A^{k_n}B)\]
with $k_{i}\geq 1$. We now see that
\[\rho(A^{k_1}B\cdots A^{k_n}B)=|\tr(A^{k_1}B\cdots A^{k_n}B)|=|\tr(A^{k_1}B)|\cdots |\tr(A^{k_n}B)|\]
where the second equality holds by repeatedly applying \cref{multiplicative trace}.
Therefore
\[\rho(\Pi)=\rho((\tr{B})^k(A^{k_1}B\cdots A^{k_n}B)))=\rho(B)^{k} \rho(A^{k_1}B)\cdots \rho(A^{k_n}B)\]

Now consider 
\[M=\sup_{n \geq 0}\left\{\rho(A^nB)^{\frac{1}{n+1}},\rho(A)\right\}\]
By \cref{gelfandv2} we know that the supremum is attained. Assume towards contradiction that there is no SMP of the form $A,B,A^nB$. Then we must have $M<1$. However, that would mean by decomposition above that for any $\Pi$, 
\[\rho(\Pi)^\frac{1}{|\Pi|}\leq M<1\] but that contradicts \cref{e.BW}. Thus there must be an SMP among $A,B,A^nB$.
\end{proof}
\begin{lemma}\label{lemma cross}
If $(A,B)$ is an irreducible pair of invertible matrices such that $AB$ is real diagonalizable and $\det(B)<0$, then either $(A,B)$ is crossing or $(B,AB)$ is crossing.
\end{lemma}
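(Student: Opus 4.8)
\emph{Proof proposal.} The plan is to mimic the proof of \cref{crossing}: normalize $B$ by a simultaneous conjugation and read off each crossing condition as a sign condition on the entries of $A$. Since $\det(B)<0$, the matrix $B$ has two distinct nonzero real eigenvalues, hence is real diagonalizable; conjugating the pair simultaneously — which leaves all the hypotheses and, by the note above, the crossing property of each of the two pairs unchanged — we may assume $B=\begin{bmatrix}\lambda&0\\0&\mu\end{bmatrix}$ with $\lambda\mu=\det(B)<0$. Writing $A=\begin{bmatrix}a&b\\c&d\end{bmatrix}$, irreducibility forces $b\neq0$ and $c\neq0$: the only $B$-invariant lines are the coordinate axes, so a common invariant line would force $A$ to be triangular.

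Next I would translate both crossing conditions into conditions on $bc$. The fixed points of $f_B$ are $0$ and $\infty$, and the two components of $\hat{\mathbb{R}}\setminus\{0,\infty\}$ are the positive and the negative reals; so a pair $(B,\cdot)$ is crossing precisely when the M\"obius map of the second matrix has one positive and one negative fixed point, i.e. when the product of its fixed points is negative. By Vieta applied to the fixed-point equation (as in \cref{crossing}), the product of the fixed points of $f_A$ is $-b/c$, while for $AB=\begin{bmatrix}a\lambda&b\mu\\c\lambda&d\mu\end{bmatrix}$ the product of the fixed points of $f_{AB}$ is $-b\mu/(c\lambda)$, whose sign equals $\operatorname{sign}(bc)$ because $\lambda\mu<0$. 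Hence $(A,B)$ is crossing iff $bc>0$, and $(B,AB)$ is crossing iff $bc<0$.

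The one point that needs care — the definition of ``crossing'' presupposes real diagonalizability, so I must check this is automatic in each branch — goes as follows. If $bc>0$, the discriminant of the characteristic polynomial of $A$ is $(a-d)^2+4bc>0$, so $A$ is real diagonalizable and ``$(A,B)$ is crossing'' is both meaningful and, by the sign computation, true. If $bc<0$, I invoke the hypothesis that $AB$ is real diagonalizable (the only place it is used): $AB$ cannot be scalar, since $AB=\gamma I$ would give $A=\gamma B^{-1}$, so $A$ and $B$ would commute and, $B$ having distinct eigenvalues, be simultaneously diagonalizable, contradicting irreducibility; thus $AB$ has two distinct real fixed points and ``$(B,AB)$ is crossing'' is meaningful and, by the sign computation, true.

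Finally, irreducibility gives $bc\neq0$, so exactly one of $bc>0$, $bc<0$ holds; hence exactly one of $(A,B)$, $(B,AB)$ is crossing, which in particular proves the lemma. I do not anticipate a genuine obstacle here — the argument is short sign bookkeeping — but the spot most easily mishandled is conflating ``not crossing'' with ``complex fixed points'', and thereby overlooking that $bc>0$ already forces $A$ to be real diagonalizable while $bc<0$ is exactly where the hypothesis on $AB$ must be spent.
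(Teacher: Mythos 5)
Your proof is correct, and it takes a genuinely different --- more computational and more self-contained --- route than the paper's. The paper argues by contrapositive through the determinant criterion of \cref{crossing}: if $(A,B)$ is not crossing then $\det(AB-BA)<0$ (this step needs a separate case analysis when $A$ is not real diagonalizable, handled via the third identity in \cref{char. of red.}), and then the one-line identity $\det\big(B(AB)-(AB)B\big)=\det(AB-BA)\det(B)>0$ shows $(B,AB)$ is crossing. You instead diagonalize $B$ (always possible here since $\det B<0$) and read both crossing conditions off the sign of $bc$. Your route buys several things: the case split on whether $A$ is real diagonalizable disappears, being absorbed into the dichotomy $bc>0$ versus $bc<0$ (and you correctly observe that $bc>0$ already forces $A$ to be real diagonalizable); you obtain the sharper conclusion that \emph{exactly} one of the two pairs is crossing; and in fact your own computation shows the hypothesis that $AB$ be real diagonalizable is redundant, since when $bc<0$ the discriminant of $AB$ equals $(a\lambda-d\mu)^2+4bc\lambda\mu>0$ automatically because $\lambda\mu<0$. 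What the paper's route buys is brevity once \cref{crossing} is in place, together with a determinant identity that is conceptually reusable. The only loose end in your write-up: the definition of crossing also presupposes that the pair is irreducible, and you verify this for $(A,B)$ but not for $(B,AB)$. It is immediate --- a common invariant line of $B$ and $AB$ would be invariant under $A=(AB)B^{-1}$, contradicting irreducibility of $(A,B)$ --- but it should be said.
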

\begin{proof}
We show that if $(A,B)$ is not crossing, then $(B,AB)$ must be crossing. We have two cases, either $A$ is or is not real diagonalizable. 

If $A$ is real diagonalizable and $(A,B)$ is not crossing then by \cref{crossing} and irreducibility of $(A,B)$ we know that:
\[\det(AB-BA)< 0\]
Now applying the same theorem to the pair $(B,AB)$ we need to determine the sign of the following expression
\[\det(BAB-ABB)=\det(BA-AB)\det(B)=\det(AB-BA)\det(B)>0\]
Which is positive since $\det(B)<0$ and so the sign is flipped. Hence $(B,AB)$ is crossing.

If $A$ is not real diagonalizable, then necessarily we have $\tr(A)^2-4\det(A)\leq 0$ and $\det(B)<0$ by assumption. Under these conditions, it is easy to see using the third characterization in \cref{char. of red.} and irreducibility of $(A,B)$ that
\[\det(AB-BA)< 0.\]
But then $(B,AB)$ is crossing by repeating the argument in the previous case.

\end{proof}
\begin{lemma}\label{opposite determinants}
If $(A,B)$ is an irreducible pair of invertible matrices such that $(B,AB)$ is crossing, then the pair $\{A,B\}$ must have an SMP of the form $A,B,A^nB$. Furthermore, every SMP is of this form (up to cyclic permutation) unless there exists $m\geq 2$ such that 
\[A^m=\JSR(A,B)^mI.\]
\end{lemma}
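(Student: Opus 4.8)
The plan is to exploit the fact that $(B, AB)$ is crossing together with the results already established in Section~5 (on the crossing region) and \Cref{non-inv}. Since $(B, AB)$ is crossing, \Cref{symmetrizing} applies: after a simultaneous conjugation we may assume $B$ and $AB$ are symmetric matrices. Note this does \emph{not} make $A$ symmetric, but $A = (AB)B^{-1}$ is a product of symmetric matrices, which will be enough. The key observation is that every product in $\{A, B\}$ can be rewritten as a product in $\{B, AB\}$ together with extra copies of $B^{-1}$: concretely, a word $\Pi \in W(a,b)$ in $A, B$ with $a \ge 1$ becomes, upon substituting each $A$ by $(AB)B^{-1}$, a product that telescopes into a word in the two symmetric matrices $B$ and $AB$ — but with possibly negative exponents on $B$. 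The cleaner route is to look at $\rho(\Pi)$ directly.

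First I would normalize, using \Cref{jsr zero}, so that $\JSR(A,B) = 1$; note $\JSR(B, AB) \le \JSR(A,B)^2 = 1$ trivially, and since $B$ and $AB$ appear as products in $\{A,B\}$, \Cref{e.BW} combined with the crossing classification (\Cref{crossing smps}) gives that $\JSR(B, AB) = \max\{\rho(B), \rho(AB)\}$, and one of $B$ or $AB$ is an SMP for the pair $(B, AB)$. Now I would argue that an SMP for $\{A, B\}$ can be found among $A, B, A^nB$: the products not of this form, up to cyclic permutation, are exactly those containing at least two "blocks" of the form $A^{k_i}B$ with $k_i \ge 1$, i.e. $\Pi = A^{k_1}B \cdots A^{k_n} B$ with $n \ge 2$ and all $k_i \ge 1$ (products with an interior $B^2$ reduce via Cayley--Hamilton as in \Cref{non-inv}, and pure powers $A^k$ are handled separately, see below). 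For such $\Pi$, I would bound $\rho(\Pi)$ by a norm estimate in the symmetrized coordinates: writing $A^{k_i}B = A^{k_i - 1}(AB)$ and using submultiplicativity of $\|\cdot\|_2$ together with $\|A\|_2 \le$ (something controlled), the goal is to show $\rho(\Pi)^{1/|\Pi|} \le \max\{\rho(B), \rho(AB), \rho(A)\} \cdot(\text{something} \le 1)$, with strict inequality unless $A$ is "degenerate." The honest difficulty is controlling $\|A\|_2$: $A$ need not be symmetric, so $\|A\|_2 \ne \rho(A)$ in general, and a crude bound loses too much.

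The way around this is to use that $A^{k_i}B$ is conjugate to a product of symmetric matrices only through $B^{-1}$, so instead I would track the product $\Pi B^{-1}\cdot B = \cdots$ — more precisely, rewrite $\Pi = A^{k_1}B A^{k_2} B \cdots A^{k_n}B$ and observe that this equals $(A^{k_1 - 1} \cdot (AB)) (A^{k_2-1}(AB))\cdots$, and each $A$ is $(AB)B^{-1}$ with both $AB$ and $B$ symmetric of operator norm $\rho(AB), \rho(B) \le 1$. Hence $\|\Pi\|_2 \le \|B^{-1}\|_2^{(\sum k_i) - n} = |\det B|^{-(\sum k_i - n)}\,\rho(B)^{\sum k_i - n}$ times bounded factors; since $|\det B| \cdot$ stuff... — at this point one must use $\rho(B)\le 1$ and the structure of singular values of a product of two symmetric matrices with a common... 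This is genuinely the main obstacle, and I expect the clean statement is: $\rho(\Pi) < 1 = \JSR$ strictly for all such $\Pi$ with $n \ge 2$, \emph{unless} the powers $A^{k}$ themselves already achieve $\rho(A^k)^{1/k} = 1$ in a degenerate way, namely $A^k = I$ (after normalization, $A^k = \JSR^k I$). When $A^k = \JSR^k I$, pure powers $A^j$ for $j < k$ can be absorbed and create a genuine ambiguity in the SMP, which is exactly the exceptional case flagged in the statement; I would verify that in this case $A^{k}B\cdots$-type products can be SMPs that are not of the claimed form, and conversely that this is the only obstruction by checking that $A^k = \JSR^k I$ is forced whenever a product with $n\ge 2$ achieves the JSR. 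Throughout, the existence half ("there \emph{is} an SMP of the form $A, B, A^nB$") follows more cheaply by the same \Cref{gelfandv2}/\Cref{seq} argument as in \Cref{non-inv}: set $M = \sup_{n\ge 0}\{\rho(A^nB)^{1/(n+1)}, \rho(A), \rho(B)\}$, note it is attained, and if no product of this form were an SMP then $M < 1$ would force $\rho(\Pi)^{1/|\Pi|} \le M < 1$ for \emph{all} $\Pi$ via the block decomposition and the crossing bound on $\{B, AB\}$, contradicting \Cref{e.BW}.
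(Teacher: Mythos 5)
You have the right setup (normalize so that $\JSR(A,B)=1$ and use \cref{symmetrizing} to make $B$ and $AB$ simultaneously symmetric), but there is a genuine gap at exactly the point you flag as ``the main obstacle.'' The missing observation is that once $B$ and $AB$ are symmetric, \emph{every} matrix $A^nB$ is automatically symmetric: writing $A=(AB)B^{-1}$ one computes
\[(A^nB)^T=\big((ABB^{-1})^{n-1}AB\big)^T=AB\big((ABB^{-1})^T\big)^{n-1}=AB(B^{-1}AB)^{n-1}=A^nB.\]
With this in hand you never need to control $\|A\|_2$ or introduce $\|B^{-1}\|_2$ at all: for $\Pi=A^{k_1}B\cdots A^{k_n}B$ you get
$\rho(\Pi)\le\|\Pi\|_2\le\prod_i\|A^{k_i}B\|_2=\prod_i\rho(A^{k_i}B)$,
and then your $M=\sup_n\{\rho(A^nB)^{1/(n+1)},\rho(A)\}$ argument via \cref{gelfandv2} and \cref{e.BW} closes the existence half exactly as you intended. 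Without the symmetry of the blocks $A^{k_i}B$, the inequality $\rho(\Pi)^{1/|\Pi|}\le M$ that your contradiction relies on is not justified, so as written the proof does not go through.

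The uniqueness half is also left as a sketch (``I would verify\dots'') and needs the same ingredient plus one more step. The paper's route: if an SMP $\Pi$ is not of the claimed form, the block bound forces each $A^{k_i}B$ to be an SMP and forces $\|A^nBA^kB\|_2=\|A^nB\|_2\|A^kB\|_2$ for some factor with $n>k$; since $A^nB$ and $A^kB$ are symmetric, \cref{strict submult} makes the pair $(A^nB,A^kB)$ reducible, and the commutator identity
$\det(A^nBA^kB-A^kBA^nB)=\det(A)^{2k}\det(B)\det(A^{n-k}B-BA^{n-k})$
together with irreducibility of $(A,B)$ forces $A^{n-k}$ to be a scalar, and then (both blocks being SMPs) $A^{n-k}=I$ with $n-k\ge 2$. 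This is precisely where the exceptional case $A^k=\JSR(A,B)^kI$ comes from; your proposal gestures at it but does not derive it.
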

\begin{proof}
First we normalize $A,B$ so that $\JSR(A,B)=1$, we can do so by \cref{jsr zero}.
 Since $(B,AB)$ is crossing,  by \cref{symmetrizing}, $(B,AB)$ can be simultaneously symmetrized. Thus, we may assume $B,AB$ are symmetric. If that is the case, then any matrix of the form $A^nB$ is also symmetric; indeed:
\[(A^nB)^T=((ABB^{-1})^{n-1}AB)^T=AB((ABB^{-1})^T)^{n-1}=AB(B^{-1}AB)^{n-1}=A^nB\]
Now we essentially repeat the arguments in \cref{non-inv}. Consider
\[M=\sup_{n \geq 0}\left\{\rho(A^nB)^{\frac{1}{n+1}},\rho(A)\right\}\]
We can apply \cref{gelfandv2} to conclude that the supremum $M$ must be attained. We assume towards contradiction that there is no SMP of the form $A,B,A^nB$, in other words $M<1$. Any product $\Pi$ (up to cyclic permutation) other than powers of A can be written as
\[\Pi=A^{k_1}B\cdots A^{k_n}B\]
with $k_i\geq 0$.
However, since $A^{k_i}B$ are all symmetric we have $\rho(A^{k_i}B)=\|A^{k_i}B\|_2$ and therefore
 \[\rho(\Pi)\leq \|\Pi\|_2\leq\|A^{k_1}B\|_2\cdots\|A^{k_n}B\|_2=\rho(A^{k_1}B)\cdots \rho(A^{k_n}B)\]
 However, due to the above we have 
 \[\rho(\Pi)^\frac{1}{|\Pi|}\leq M<1\]
 which would contradict \cref{e.BW}. Hence $M=1$ and there is an SMP of the desired form.
 
We now prove there can be no SMP of any other form unless there exists $m\geq 2$ such that $A^m=I$. Assume $\Pi$ is an SMP that is not of the desired form, then by the same decomposition 
 \[\rho(\Pi)\leq \|\Pi\|_2\leq\|A^{k_1}B\|_2\cdots\|A^{k_n}B\|_2=\rho(A^{k_1}B)\cdots \rho(A^{k_n}B)\]
Due to the above, for $\Pi$ to be an SMP, it must be the case that for each $i$, $A^{k_i}B$ is an SMP. Now since $\Pi$ is primitive and we are allowed to permute it cyclically, we may assume there exists $n>k$ such that $A^nBA^kB$ is a factor of $\Pi$. Hence, we must have
\[\|A^{n}BA^{k}B\|= \|A^nB\|\|A^kB\|, \]
otherwise $\Pi$ would not be an SMP. Since $A^nB,A^kB$ are both symmetric we can apply \cref{strict submult} to conclude $(A^nB,A^kB)$ is reducible. However, by \cref{char. of red.} (2):
\[0=\det(A^nBA^kB-A^kBA^nB)=\det(A)^{2k}\det(B)\det(A^{n-k}B-BA^{n-k})\]
which implies that $(A^{n-k},B)$ is reducible since $A,B$ are invertible. This means that either $(A,B)$ is reducible which is a contradiction or that $A^{n-k}$ is a multiple of the identity. However, if it is a multiple of the identity then, since both $A^nB,A^kB$ are SMPs and $A^nB=A^{n-k}A^kB$, we must have $A^{n-k}=I$. This proves the claim as $n-k\geq 2$, otherwise $A=I$ and $(A,B)$ would be reducible.
\end{proof}
We are now ready to prove the 2\textsuperscript{nd} part of \cref{intro thm} now for invertible matrices:
\begin{theorem}\label{ mixed case}
 If the pair $(A,B)$ is irreducible with $\det(A)>0, \det(B)<0$ and there is no $k\geq 2$ such that $A^k=\JSR(A,B)^kI$, then all the SMPs must be of the form $A,B,A^nB$.
\end{theorem}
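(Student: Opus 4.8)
The plan is to feed this into \cref{opposite determinants} by way of \cref{lemma cross}, after first noting that the sign hypothesis on the determinants makes the extra hypothesis of \cref{lemma cross} automatic. Concretely: since $\det(AB)=\det(A)\det(B)<0$, the matrix $AB$ cannot have a complex-conjugate pair of eigenvalues (that would force $\det(AB)=|\lambda|^2\geq 0$), so its two eigenvalues are real and of opposite sign, hence distinct; thus $AB$ is real diagonalizable. The same argument applied to $B$ shows $B$ is real diagonalizable. Also, $\JSR(A,B)>0$ by \cref{jsr zero} since $(A,B)$ is irreducible, so the normalizations performed inside the lemmas below are legitimate.

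With $A,B$ irreducible and invertible, $AB$ real diagonalizable, and $\det(B)<0$, \cref{lemma cross} gives the dichotomy: either $(A,B)$ is crossing, or $(B,AB)$ is crossing. In the first case $(A,B)\in\mathcal{R}_{cross}$ (the definition of crossing forces $A$ to be real diagonalizable here), so \cref{crossing smps} applies verbatim: an SMP exists and every SMP equals $A$ or $B$, each of which is trivially of the form $A,B,A^nB$. In the second case $(B,AB)$ is crossing, so \cref{opposite determinants} tells us that $\{A,B\}$ has an SMP of the form $A,B,A^nB$, and that every SMP is of this form \emph{unless} there exists $k\geq 2$ with $A^k=\JSR(A,B)^k I$. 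But that is exactly the configuration excluded by hypothesis, so in this case too every SMP has the desired form. Combining the two cases finishes the proof.

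There is no serious obstacle: the content has been packaged into the preceding lemmas, and this theorem is essentially their assembly. The only points that need a little care are (i) checking that the two alternatives of \cref{lemma cross} genuinely exhaust the situation here---this rests on the automatic real-diagonalizability of $AB$ noted above---and (ii) matching the exceptional case in \cref{opposite determinants} with the theorem's hypothesis. (One may also observe, though it is not needed, that when $\det(A)>0$ the exceptional case $A^k=\JSR(A,B)^kI$ can only occur when $A$ fails to be real diagonalizable, since a real-diagonalizable $A$ with positive determinant satisfying $A^k=cI$ must be scalar, contradicting irreducibility.)
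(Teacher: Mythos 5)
Your proposal is correct and follows essentially the same route as the paper: apply \cref{lemma cross} to split into the crossing case (handled by \cref{crossing smps}) and the $(B,AB)$-crossing case (handled by \cref{opposite determinants}, with the exceptional case excluded by hypothesis). Your explicit check that $\det(AB)<0$ forces $AB$ to be real diagonalizable is a welcome detail the paper leaves implicit when invoking \cref{lemma cross}.
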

\begin{proof}\label{opposite determinant}
By \cref{lemma cross} we have two cases either $(A,B)$ or $(B,AB)$ is crossing. If $(A,B)$ is crossing, we are done as any SMP must be either $A$ or $B$ by \cref{crossing smps}.
Alternatively, if $(B,AB)$ is crossing then by \cref{opposite determinants} all the SMP's must be of the form $A,B,A^nB$.
\end{proof}
We now show that all possible SMPs listed in \cref{ mixed case} actually are SMPs for some pair $(A,B)$ satisfying the conditions. The following construction was heavily inspired by \cite[Example 5.2]{Mejstrik}.
\begin{theorem}
For each $n \geq 1$, there exists a pair of matrices $(A_n,B_n)$ such that:
\begin{itemize}
    \item $A_n$ has real eigenvalues and it is not diagonalizable
    \item $B_n$ is non-invertible
    \item For any $(A,B)$, sufficiently close to $(A_n,B_n)$, the unique SMP is $A^nB$
\end{itemize}
\end{theorem}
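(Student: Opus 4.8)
### Proof plan

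The plan is to build the pair $(A_n, B_n)$ explicitly and then argue by an open-condition (robustness) argument. First I would take $B_n$ to be a rank-one projection-like matrix, say $B_n = \begin{bmatrix} 1 & 0 \\ 0 & 0\end{bmatrix}$ (or a small perturbation thereof placing its image and kernel in general position), so that $\det(B_n) = 0$ automatically and $\tr(B_n) = 1$. For $A_n$ I would take a matrix with a double real eigenvalue that is genuinely non-diagonalizable, i.e. conjugate to a Jordan block $\begin{bmatrix} \lambda & 1 \\ 0 & \lambda \end{bmatrix}$, with $\lambda$ chosen (depending on $n$) and the conjugating frame chosen so that the ``shear direction'' of $A_n$ interacts with the kernel/image of $B_n$ in exactly the way needed to make $A_n^n B_n$ dominant. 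Following the cited Mejstrik example, the natural normalization is to rescale so that $\JSR(A_n, B_n) = 1$; then the task becomes: choose the parameters so that $\rho(A_n^n B_n) = 1$ while $\rho(A_n^m B_n) < 1$ for all $m \neq n$, and $\rho(A_n) = \rho(B_n) < 1$ as well (with $A_n$ non-diagonalizable, $\rho(A_n)$ is just $|\lambda|$ and powers of $A_n$ grow only polynomially, so this is consistent with $\JSR = 1$).

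The key computational input is the trace formula from \cref{multiplicative trace}: since $\det(B_n) = 0$, we have $\tr(A_n^{k_1} B_n \cdots A_n^{k_r} B_n) = \prod_i \tr(A_n^{k_i} B_n)$, and moreover each such product has rank one (it ends in $B_n$), so its spectral radius equals the absolute value of its trace. Thus $\rho(A_n^{k_1}B_n \cdots A_n^{k_r}B_n) = \prod_i |\tr(A_n^{k_i}B_n)|$, and by \cref{non-inv} (or its proof) every product other than a power of $A_n$ reduces, after pulling out powers of $\tr B_n = 1$, to one of this form; hence $\JSR(A_n,B_n) = \sup_{k\ge 0}\{\,|\tr(A_n^kB_n)|^{1/(k+1)},\,\rho(A_n)\,\}$ and an SMP is a word $A_n^k B_n$ realizing this sup (or $A_n$, or $B_n$). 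So I would compute $t_k := \tr(A_n^k B_n)$ as an explicit function of $k$ and the parameters. With $A_n$ a scaled Jordan block $\lambda(I + N)^{?}$ — more precisely with $A_n^k$ having the form $\lambda^k\begin{bmatrix} 1 & k/\lambda \\ 0 & 1\end{bmatrix}$ in a suitable basis — and $B_n$ rank one, $t_k$ comes out as $\lambda^{k}(\alpha + \beta k)$ for constants $\alpha, \beta$ depending on the conjugating frame. The function $k \mapsto |\lambda^k(\alpha+\beta k)|^{1/(k+1)}$ is unimodal in $k$ for suitable sign choices, so I can force its unique maximum over $\{0,1,2,\dots\}$ to occur at $k = n$ by tuning $\lambda$ (roughly $\lambda \approx 1 - c/n$) and the ratio $\alpha/\beta$; rescaling then makes that maximum equal to $1$, and one checks $\rho(A_n) = |\lambda| < 1$ and $\rho(B_n) = 0 < 1$, so indeed $A_n^n B_n$ is the SMP and (by \cref{non-inv} plus strict unimodality) it is the unique one.

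Finally, for the robustness clause: the set of pairs $(A,B)$ for which $A^nB$ is \emph{an} SMP is characterized, via \cref{non-inv} again, by the inequalities $\rho(A^nB)^{1/(n+1)} \ge \rho(A^mB)^{1/(m+1)}$ for all $m$, $\rho(A^nB)^{1/(n+1)} \ge \rho(A)$, $\ge \rho(B)$ — but this is an infinite system, so I cannot directly conclude openness. The resolution is to use that at $(A_n,B_n)$ the relevant sup is attained with a strict gap over all \emph{other} finite words of bounded length, and over long words by a uniform submultiplicativity estimate: since $|t_k|^{1/(k+1)} \to |\lambda| < 1 = \JSR$ as $k \to \infty$, there is $K$ and $\delta>0$ with $|t_k|^{1/(k+1)} < 1-\delta$ for $k > K$, and this persists under small perturbation; combined with $\rho(A), \rho(B) < 1-\delta$ persisting, only finitely many words $A^kB$ ($k \le K$) can compete, and among those $k=n$ wins strictly at $(A_n,B_n)$, hence on a neighborhood. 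The main obstacle I anticipate is precisely this last step — promoting ``$A^nB$ is the SMP'' from the single pair to an open neighborhood — because one must control all products simultaneously; the key device making it work is that $\det B = 0$ collapses every product to a rank-one matrix with explicitly computable spectral radius, so the competition is genuinely one-parameter (indexed by the exponent $k$) and the tail $k \to \infty$ is uniformly subcritical. I would also need to double-check that the non-diagonalizability of $A_n$ and non-invertibility of $B_n$ are compatible with irreducibility (so that \cref{non-inv} applies), which holds as soon as $\ker B_n$ is not an eigendirection of $A_n$.
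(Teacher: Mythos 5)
There is a genuine gap, and it is concentrated in the third bullet (robustness). Your entire mechanism --- the collapse of every product to a rank-one matrix via \cref{multiplicative trace}, the reduction of the competition to the one-parameter family $A^kB$ via \cref{non-inv}, and the identity $\rho(A^{k_1}B\cdots A^{k_r}B)=\prod_i|\tr(A^{k_i}B)|$ --- is predicated on $\det B=0$. But the theorem asserts that $A^nB$ is the unique SMP for \emph{every} pair sufficiently close to $(A_n,B_n)$, and a generic perturbation makes $\det B\neq 0$ (indeed, the note following the theorem stresses that one wants nearby pairs with $\det B<0$ so as to realize $A^nB$ as an SMP under the hypotheses of \cref{ mixed case}). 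Once $\det B\neq 0$, products no longer reduce to powers of a single word: you must control $\rho(A^{k_1}B^{l_1}\cdots A^{k_r}B^{l_r})^{1/|\Pi|}$ over all words simultaneously, and your tail estimate ``$|t_k|^{1/(k+1)}<1-\delta$ for $k>K$'' says nothing about, e.g., $A^{k_1}BA^{k_2}B$ with $k_1\neq k_2$, let alone words containing $B^2$. Continuity of the JSR does not rescue this, since it does not preserve which word is maximizing. The paper's proof is built precisely to survive this: it constructs an invariant convex polygon $S$ with $\|A_n\|_S=\|B_n\|_S=1$ (an extremal norm), reads off from the vertex-transition graph that every primitive product other than $A_n^nB_n$ eventually maps $S$ strictly inside itself, and then observes that the perturbed pair admits a nearby polygon $\tilde S$ with the identical combinatorics, so the strict contractions --- and hence uniqueness of the SMP --- persist for all nearby pairs regardless of $\det B$.

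For the unperturbed pair your trace-based route is a legitimate alternative to the paper's polygon argument and would establish the first two bullets together with ``$A_n^nB_n$ is the unique SMP of $(A_n,B_n)$,'' provided you actually carry out the tuning you defer (forcing the unique maximum of $k\mapsto|\lambda^k(\alpha+\beta k)|^{1/(k+1)}$ at $k=n$; the paper's choice $\lambda=c^{1/n}$ with $c=W(1/e)$ and a tangency condition does exactly this). One further inconsistency to fix even there: with $B_n=\mathrm{diag}(1,0)$ you have $\rho(B_n)=|\tr B_n|=1$, so after normalizing $\JSR=1$ the matrix $B_n$ itself would be an SMP, contradicting uniqueness; you need $|\tr B_n|$ strictly below the normalized JSR, which your later claim ``$\rho(B_n)=0$'' contradicts. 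These are repairable, but the openness step is not repairable within your framework and requires an extremal-norm (invariant polytope) argument or a substitute for it.
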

Note that a pair $(A,B)$ (sufficiently close to $(A_n,B_n)$) can be chosen such that all hypothesis of \cref{ mixed case} are satisfied, showing that the word $A^nB$ is indeed a possible SMP.
\begin{proof}
Denote $c=0.278\dots$ as the unique root of the equation:
\[xe^{x+1}=1.\]
Equivalently, $c=W(1/e)$ in terms of Lambert $W$ function.
Now consider the following matrix and vectors generated by it:
\[A_n=c^\frac{1}{n}\begin{bmatrix}
1 & 0\\
1 & 1
\end{bmatrix},\quad  v_i=A_n^i\begin{bmatrix}
1\\
0
\end{bmatrix}=\begin{bmatrix}
c^\frac{i}{n} \\
ic^\frac{i}{n}
\end{bmatrix}\]
First note that for any $i \geq 0$, $v_i$ lies on the strictly concave graph of the function:
\[y=\frac{nx\log x}{\log c}
\quad (x>0).\]
\begin{figure}[h]
\centering
\includegraphics[scale=0.35]{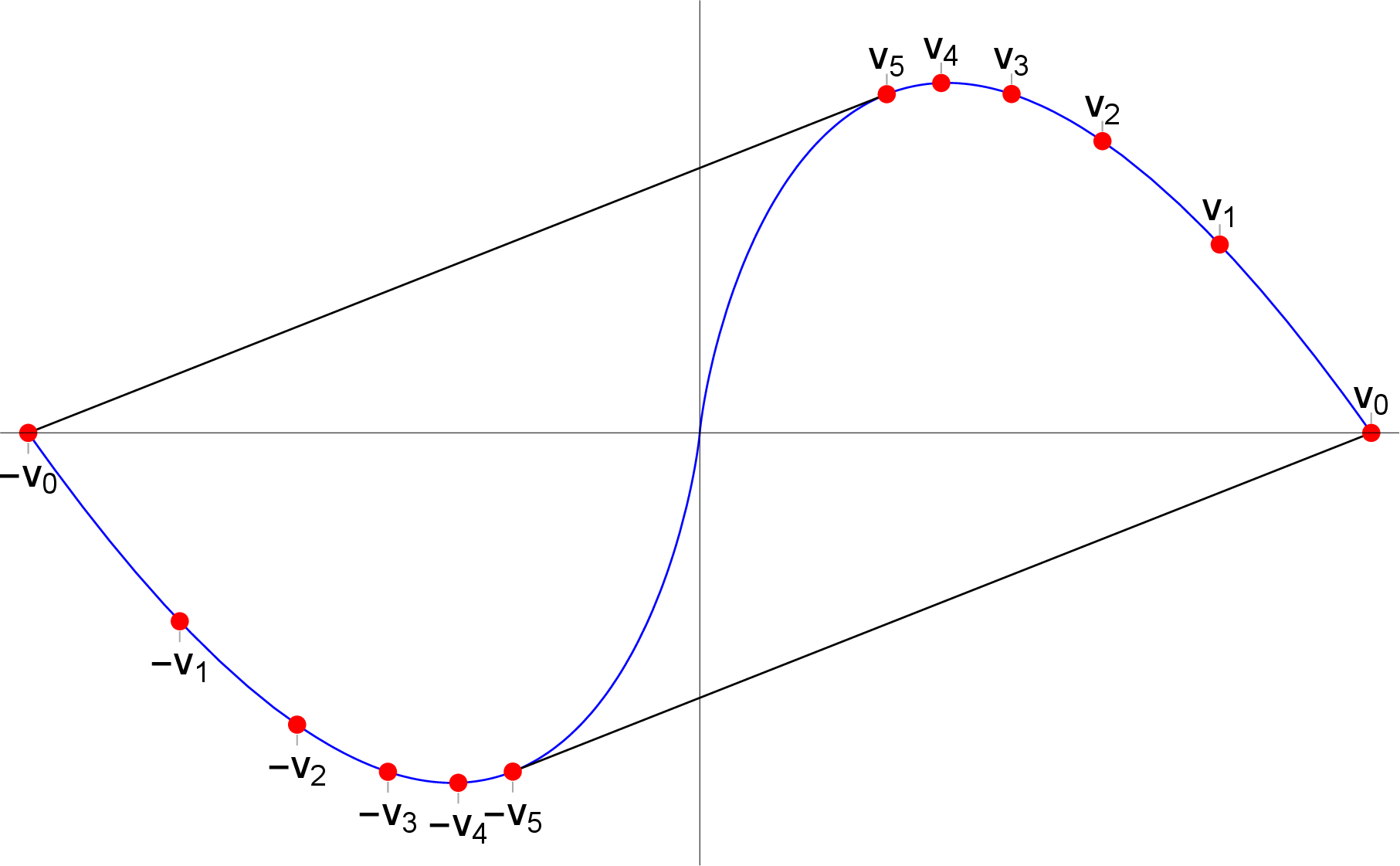}
\caption{For $n=5$, graph of $\frac{nx\log |x|}{\log c}$ (blue), the vectors (red) and tangent lines at $v_5,-v_5$ (black).}
\label{polygon_pic}
\end{figure}
By calculus, the tangent to the curve at the point $v_n$ passes through the point $\begin{bmatrix}
    -1 \\
    0
\end{bmatrix}=-v_0$. Since the function is strictly concave, the vectors $\pm v_0,\dots,\pm v_n$ are vertices of a centrally symmetric convex $(2n+2)$-gon $S$ (See \cref{polygon_pic}). Furthermore, $A_nv_n=v_{n+1}$ is in the interior of $S$ by strict concavity. This proves that $A_n(S)\subseteq S$ since all vertices map to another vertex except $v_n$, which maps strictly inside $S$.

Now consider a line $L\subseteq \mathbb{R}^2$ such that $L \cap S=\{v_n\}$. Let $B_n$ be the rank one linear map such that $B_nv_n=v_0$ and $\text{Ker}B_n$ is parallel to $L$. This choice of $B_n$ keeps $S$ invariant since we have the following inclusions: 
\[B_n(S)=B_n([-v_n,v_n])=[-v_0,v_0]\subseteq{S}\]

Given any vertex of $S$, its image under $A_n$ or $B_n$ either maps inside the polygon or is another vertex. The graph below shows vertices that map to other vertices and through what maps.
\begin{equation}\label{e.graph}
\begin{tikzpicture}[scale=1.15,baseline=(current  bounding  box.center)]
\tikzset{vertex/.style = {shape=circle,minimum size=0em}}
\tikzset{edge/.style = {->,> = latex'}}
\node[vertex] (za) at  (0,0) {$v_{0}$};
\node[vertex] (zb) at  (1,1) {$v_{1}$};
\node[vertex] (ze) at  (2.4,1) {};
\node[vertex] (zi) at  (2.4,-1) {};
\node[vertex] (zh) at  (1,-1) {$v_{n}$};
\draw[edge] (za) to["$A_n$"] (zb);
\draw[edge] (zb) to["$A_n$"]  (ze);
\draw[edge] (zi) to["$A_n$"]  (zh);
\draw[edge] (zh) to["$B_n$"] (za);
\draw [<-,dashed] (2.4,-1) arc[x radius=1, y radius=1, start angle=-90, end angle=90];
\end{tikzpicture}
\end{equation}

Let $\|\cdot\|_S$ be the Minkowski norm of the polygon $S$. Since $\|A_n\|_S=1$ and $\|B_n\|_S=1$, $\JSR(A_n,B_n)\leq 1$ by \cref{three member}. However, $\rho(A_n^nB_n)=1$ by construction, thus $\rho(A_n^nB_n)=\JSR(A_n,B_n)$ and so $A_n^nB_n$ is an SMP. 

To check uniqueness of the SMP, let $\Pi$ be any primitive product that is not a cyclic permutation of $A_n^nB_n$. Then any vertex of $S$ is mapped to the interior of $S$ by some power of $\Pi$; this follows from the structure of the graph \eqref{e.graph}. In particular, there exists $l$ such that $\Pi^l(S)\subseteq \mathrm{int}(S)$ and therefore $\|\Pi^l\|_S< 1$. This shows that $\rho(\Pi)<1$ and so $\Pi$ cannot be an SMP.

If $(A,B)$ is close enough to $(A_n,B_n)$ and re-scaled so that $\rho(A^nB)=1$, then $BA^n$ has an eigenvector $\tilde{v}_0=BA^n\tilde{v}_0$ close to $v_0$. For $i \geq 0$, let $\tilde{v}_i\coloneq A^i\tilde{v}_0$. For each $i$, $\tilde{v}_i$ must be close to $v_i$. So, for sufficiently small perturbation, the vectors $\pm \tilde{v}_0,\dots,\pm \tilde{v}_n$ are the vertices of a convex polygon $\tilde{S}$. Note that the combinatorics \eqref{e.graph} are unchanged, so $\tilde{S}$ is an invariant polygon for $(A,B)$ and the previous arguments persist. Therefore $A^nB$ is still the unique SMP.
\end{proof}

\section{ What are the SMPs if $(A,B) \in \mathcal{R}_{neg}$?}
In this section we will assume $A,B$ both have negative determinant and thus both $A,B$ are real diagonalizable.
\begin{lemma}\label{elliptic lemma}
If $A,B$ are matrices with negative determinants and $AB$ is not real diagonalizable, then $(A,B)$ is crossing or reducible.
\end{lemma}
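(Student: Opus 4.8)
The plan is to reduce to a normal form and then use the Cayley--Hamilton identity to control the spectrum of $AB$. First I would note that since $\det A<0$ and $\det B<0$, both $A$ and $B$ are automatically real diagonalizable with one positive and one negative eigenvalue, so the hypothesis that $AB$ is \emph{not} real diagonalizable is the only extra assumption. Since being crossing, reducible, real diagonalizable, and the sign of $\det(AB-BA)$ are all invariant under simultaneous conjugation, I may assume $A$ is diagonal, say $A=\operatorname{diag}(a_1,a_2)$ with $a_1>0>a_2$, and write $B=\begin{bmatrix} b_1 & b_2 \\ b_3 & b_4\end{bmatrix}$. If $b_2=0$ or $b_3=0$ then $(A,B)$ is reducible and we are done, so assume $b_2 b_3\neq 0$.

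Next I would exploit the characterization in \cref{crossing}: $(A,B)$ is crossing iff $\det(AB-BA)>0$, and (from the proof of \cref{crossing}) $\det(AB-BA)=b_2 b_3 (a_1-a_2)^2$, so crossing is equivalent to $b_2 b_3>0$. Thus it suffices to show that if $b_2 b_3<0$ (the only remaining irreducible case) then $AB$ \emph{is} real diagonalizable, i.e.\ we prove the contrapositive of the nontrivial direction. A $2\times 2$ real matrix $M$ is real diagonalizable precisely when either $\operatorname{tr}(M)^2-4\det(M)>0$, or $\operatorname{tr}(M)^2-4\det(M)=0$ and $M$ is already scalar. Here $\det(AB)=\det A\det B=u v>0$ since both determinants are negative, so $AB$ has positive determinant; the discriminant is $\operatorname{tr}(AB)^2-4\det(AB)=z^2-4uv$. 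So I would compute $z=\operatorname{tr}(AB)=a_1 b_1+a_2 b_4$ and $uv=a_1 a_2 (b_1 b_4-b_2 b_3)$, and show $z^2-4uv>0$ whenever $b_2 b_3<0$. Indeed $z^2-4uv=(a_1 b_1+a_2 b_4)^2-4a_1 a_2(b_1 b_4-b_2 b_3)=(a_1 b_1-a_2 b_4)^2+4a_1 a_2 b_2 b_3$, and since $a_1 a_2<0$ and $b_2 b_3<0$ we have $4a_1 a_2 b_2 b_3>0$, so the whole expression is a sum of a nonnegative term and a strictly positive term, hence strictly positive. Therefore $AB$ has two distinct real eigenvalues and is real diagonalizable.

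Putting it together: if $(A,B)$ is irreducible and not crossing, then in the normal form $b_2 b_3<0$, whence $AB$ is real diagonalizable; contrapositively, if $AB$ is not real diagonalizable then $(A,B)$ is crossing or reducible. The main (minor) obstacle is purely bookkeeping: making sure the degenerate sub-cases ($b_2=0$, $b_3=0$, or $AB$ scalar) are correctly folded into the ``reducible'' alternative, and confirming that the identity $z^2-4uv=(a_1 b_1-a_2 b_4)^2+4a_1 a_2 b_2 b_3$ is exactly right --- this can also be phrased invariantly via characterization (3) in \cref{char. of red.}, writing $\det(AB-BA)=\tfrac14(\operatorname{tr}(A)^2-4\det A)(\operatorname{tr}(B)^2-4\det B)-(\operatorname{tr}(AB)-\tfrac12\operatorname{tr}(A)\operatorname{tr}(B))^2$ and noting that $\operatorname{tr}(A)^2-4\det A>0$, $\operatorname{tr}(B)^2-4\det B>0$ force $\det(AB-BA)\le 0$ to imply $(\operatorname{tr}(AB)-\tfrac12\operatorname{tr}(A)\operatorname{tr}(B))^2>0$, but the direct computation above is cleanest for pinning down the discriminant of $AB$.
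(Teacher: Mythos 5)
Your proof is correct and is essentially the same argument as the paper's: both come down to the algebraic identity linking $\det(AB-BA)$ to the discriminants of $A$, $B$, and $AB$, followed by completing a square using $\det A<0$, $\det B<0$. The only difference is cosmetic: the paper works invariantly with the $5$-tuple, showing that $z^2\le 4uv$ forces $\det(AB-BA)\ge(\sqrt{-u}\,y-\sqrt{-v}\,x)^2\ge 0$, whereas you conjugate $A$ to diagonal form and verify the contrapositive via the explicit (and correct) discriminant computation $z^2-4uv=(a_1b_1-a_2b_4)^2+4a_1a_2b_2b_3$.
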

\begin{proof}
To simplify notation, we will be using the associated $5$-tuple as defined in \eqref{e.5_var}. We may assume $x\geq 0,y\geq 0$ without loss of generality by independently rescaling $A,B$ if necessary.
If $AB$ is not real diagonalizable then $z^2-4uv\leq 0$ and thus $-2\sqrt{uv}\leq z\leq 2\sqrt{uv}$. However, this implies
\[4 u v-u y^2-v x^2+x y z-z^2\geq -u y^2-v x^2-2x y\sqrt{uv}=(\sqrt{-u}y-\sqrt{-v}x)^2\geq 0 \]
Hence
\[4 u v-u y^2-v x^2+x y z-z^2\geq 0\]
Which means $(A,B)$ is crossing or reducible by combining \cref{friedland,char. of red.,crossing}.
\end{proof}
We are now ready to prove the 3\textsuperscript{rd} part of \cref{intro thm}.
\begin{theorem}\label{both negative determinants}
If both $A,B$ have negative determinants, the pair is irreducible and $A^2\not=\JSR(A,B)^2I$, $B^2\not=\JSR(A,B)^2I$, then all the SMPs are among $A,B,AB$.
\end{theorem}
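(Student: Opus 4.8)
The plan is to mimic the strategy used for $\mathcal{R}_{mix}$: reduce to a crossing pair (which can be simultaneously symmetrized) and then exploit the submultiplicativity estimate of \cref{strict submult}. The starting point is that, after normalizing so that $\JSR(A,B)=1$ via \cref{jsr zero}, we consider the product $AB$. By \cref{elliptic lemma}, since $A,B$ have negative determinants and $(A,B)$ is irreducible, either $(A,B)$ is crossing or $AB$ is real diagonalizable. In the first case we are immediately done by \cref{crossing smps}, which forces every SMP to be $A$ or $B$. So the substantive case is $AB$ real diagonalizable, and here I want to show $(A,AB)$ (equivalently $(B,AB)$) is crossing. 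Since $\det(A)<0$, the determinant-flip computation already used in \cref{lemma cross} applies: $\det(A\cdot AB - AB\cdot A)=\det(AB-BA)\det(A)$, so $\det(A(AB)-(AB)A)>0$ exactly when $\det(AB-BA)<0$, i.e. exactly when $(A,B)$ is \emph{not} crossing. Hence in the remaining case $(A,AB)$ is crossing (provided $A$ is real diagonalizable, which it is, being a negative-determinant matrix), and by \cref{symmetrizing} the pair $(A,AB)$ is simultaneously conjugate to a pair of symmetric matrices. After conjugating, assume $A$ and $AB$ are symmetric; then $B=A^{-1}(AB)$, and any word in $A,B$ can be rewritten as a word in the letters $A^{\pm 1}$ and $AB$.

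The combinatorial heart of the argument is then to express an arbitrary primitive product $\Pi$ in $A,B$ as a product of matrices each of which has norm equal to its spectral radius. Writing $C\coloneqq AB$, a word in $A,B$ becomes, up to cyclic permutation, a product of blocks of the form $A^{k}C$ with $k\in\mathbb{Z}$; the blocks with $k\geq 0$ are symmetric (as $(A^kC)^T=C^TA^k=CA^k=A^{-k}(A^kC)A^k$... — more carefully, one checks $(A^kC)^T = A^kC$ directly from symmetry of $A$ and $C$ when $k\ge 0$, and similarly $(A^{-k}C)^T=C A^{-k}$, which is symmetric too since $C,A$ commute in the transpose sense here). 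Because all these blocks are symmetric, $\rho(A^kC)=\|A^kC\|_2$, so by submultiplicativity $\rho(\Pi)\le\|\Pi\|_2\le\prod_i\|A^{k_i}C\|_2=\prod_i\rho(A^{k_i}C)$. Since $\JSR(A,B)=1$, each factor satisfies $\rho(A^{k_i}C)\le 1$ — more precisely, by a Gelfand-type argument as in \cref{gelfandv2,seq}, the supremum of $\rho(A^kC)^{1/(|k|+1)}$ over $k\in\mathbb Z$ together with $\rho(A)$ is attained, and if it were $<1$ then every $\rho(\Pi)^{1/|\Pi|}<1$, contradicting \cref{e.BW}. So the supremum equals $1$ and is attained: some $A,B=A^{-1}C$, or $A^kC$ is an SMP. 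One must then argue that the only $A^kC$ that can be an SMP (forcing it, after simplification, to be $AB$) — here the hypothesis $A^2\neq I$, $B^2\neq I$ is exactly what rules out the degenerate case, by the same reducibility argument as in \cref{opposite determinants}: if two distinct blocks $A^mC$, $A^kC$ are both SMPs then $\|A^mC A^kC\|_2=\|A^mC\|_2\|A^kC\|_2$, which by \cref{strict submult} forces $(A^mC,A^kC)$ reducible, hence (using $\det$ of the commutator and invertibility) $A^{m-k}$ is a scalar, and the normalization pins the scalar to $1$, so $A^{m-k}=I$; combined with $A^2\neq I$ (and the analogous statement for $B$) this collapses the block structure down to a power of a single SMP among $A,B,AB$.

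I expect the main obstacle to be the bookkeeping around negative powers of $A$: unlike the $\mathcal{R}_{mix}$ case, where blocks were genuinely of the form $A^nB$ with $n\ge 0$, here a word in $A,B$ only naturally decomposes into $A^{k}C$ blocks with $k$ of arbitrary sign, and one needs that \emph{both} $A^kC$ (for $k\ge 0$) and its transpose-partner are symmetric so that the norm-equals-spectral-radius identity is available uniformly. A clean way to handle this is to note that $B$ itself is symmetric in the symmetrizing coordinates (since $B = A^{-1}C$ and one can check $B^T = C^TA^{-T}=CA^{-1}$; but $CA^{-1}=A^{-1}(AC A^{-1})$, and since $A, C$ are symmetric with $AC$ having the same spectrum as $CA$... actually the cleanest route is: $(A,B)$ not crossing together with $\det A,\det B<0$ and $AB$ diagonalizable — symmetrize $(A,AB)$, then verify $B=A^{-1}(AB)$ is symmetric because $A^{-1}$ and $AB$ are symmetric and $A^{-1}(AB)=(AB)A^{-1}$ would be needed, which may fail). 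If that identity fails, the fallback is to symmetrize the pair $(B,AB)$ instead (which is also crossing by the symmetric computation $\det(B(AB)-(AB)B)=\det(AB-BA)\det(B)>0$), and then every block is of the form $B^kC$ with the analogous symmetry, and the $A,B,AB$ trichotomy emerges by the same reasoning with the roles interchanged. Verifying that one of these two symmetrizations always produces the needed uniform symmetry of blocks is the one genuinely delicate point; everything else is a routine adaptation of \cref{non-inv,opposite determinants}.
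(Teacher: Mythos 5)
Your first half coincides with the paper's argument: normalize so that $\JSR(A,B)=1$, dispose of the crossing case via \cref{crossing smps}, use \cref{elliptic lemma} to force $AB$ to be real diagonalizable, and use the determinant-flip computation of \cref{lemma cross} to conclude that \emph{both} $(A,AB)$ and $(B,AB)$ are crossing. The gap is in the second half, where you try to rebuild the block decomposition from scratch using the letters $A^{\pm 1}$ and $C\coloneqq AB$. A word in $A,B$ containing consecutive $B$'s necessarily produces blocks $A^kC$ with $k=-1$ (since $B^b=(A^{-1}C)^b$), and the block $A^{-1}C=B$ is \emph{not} symmetric in the coordinates that symmetrize $(A,AB)$: there $B^T=CA^{-1}$, so symmetry of $B$ would force $A$ and $C$ to commute, i.e.\ $(A,B)$ reducible. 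Your fallback, symmetrizing $(B,AB)$ and using blocks built from $CB^{-1}=A$, fails for the same reason. So the ``one genuinely delicate point'' you flag is not merely delicate; as set up it is false, and the chain $\rho(\Pi)\le\|\Pi\|_2\le\prod_i\|A^{k_i}C\|_2=\prod_i\rho(A^{k_i}C)$ is not available. (Nor can it be fixed by a third choice of coordinates: $A$, $B$, $AB$ cannot all be symmetric simultaneously, since $(A,B)$ is assumed not crossing.)

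The repair is to avoid the letter $C$ altogether, which is what the paper does: it simply applies \cref{opposite determinants} twice. That lemma (whose proof symmetrizes $(B,AB)$ and uses the blocks $A^nB$ with $n\ge 0$, all of which \emph{are} symmetric; no negative powers ever arise because every word containing a $B$ is a cyclic permutation of $A^{k_1}B\cdots A^{k_n}B$ with $k_i\ge 0$) gives that all SMPs lie in $\{A,B,A^nB\}$. Applied with the roles of $A$ and $B$ interchanged -- legitimate because $(A,BA)$ is simultaneously conjugate to $(A,AB)$, hence crossing -- it gives that all SMPs lie in $\{A,B,B^nA\}$. Intersecting the two lists (up to cyclic permutation) yields $\{A,B,AB\}$. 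The exceptional clause of \cref{opposite determinants} requires some $k\ge 2$ with $A^k=\JSR(A,B)^kI$ or $B^k=\JSR(A,B)^kI$; since $\det A,\det B<0$ this forces $k=2$, which your hypotheses exclude. That part of your endgame is correct in spirit, but it is reached far more cheaply through the intersection of the two lists than through your reducibility analysis of two distinct $A^kC$-blocks.
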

\begin{proof}
Without loss of generality we will normalize both $A,B$ so that $\JSR(A,B)=1$. If $(A,B)$ is crossing we are done by \cref{crossing smps}, thus we may assume $(A,B)$ is not crossing. By \cref{elliptic lemma}, $AB$ must be real diagonalizable. Now by applying \cref{lemma cross} twice (Interchanging the role of $A$ and $B$), we see that both $(A,AB)$ and $(B,AB)$ are crossing. However, by applying \cref{opposite determinants} to the pair $(A,AB)$ and $(B,AB)$ we see that all SMPs must be of the form $AB^n$ or $B$ as well as of the form $A^nB$ or $A$, unless $A$ or $B$ have finite order (Since $\JSR(A,B)=1$). However, the only matrices that have finite order and negative determinant must be conjugate to reflection and so have order $2$, which by assumption is not the case. Thus all the SMPs must be either $A,B,AB$ as those are the only products the two lists above have in common.
\end{proof}
\section{Ergodic Optimization and Joint Spectral Radius}\label{section 2}
To tackle the co-parallel case we need to view the joint spectral radius as a quantity related to ergodic optimization. Given a finite set of matrices $\mathcal{A}=\{A_0,\dots,A_{k-1}\}$, let $\sigma$ be the shift on $\Sigma_k=\{0,\dots,k-1\}^\mathbb{N}$. We define the following map for $\omega=(\omega_i)_{i \in \mathbb{N}}$
\begin{equation}
\mathcal{L}(\omega,n)=A_{\omega_n}\cdots A_{\omega_1}
\end{equation}
for which we may define the Lyapunov exponent.
\begin{definition}\label{e.lyap}
Let $\mathcal{M}_\sigma$ be the set of shift invariant probability measures. For each $\mu \in \mathcal{M}_\sigma$, the \emph{Lyapunov exponent} of $\mathcal{L}$ with respect to $\mu$ is defined as
\begin{equation}
    \lambda(\mu)\coloneq \lim_{n \to \infty}\frac{1}{n}\int_{\Sigma_k}\log \|\mathcal{L}(\omega,n)\|d\mu(\omega).
\end{equation}
\end{definition}
The joint spectral radius and the Lyapunov exponent of $\mathcal{L}$ are connected by the following abbreviated version of a theorem by Morris:
\begin{theorem}\cite{Morris_Mather}
\begin{equation}
\log\JSR(\mathcal{A})=\sup_{\mu \in \mathcal{M_\sigma}}\lambda(\mu)
\end{equation}
The supremum is always attained for some ergodic measure.
\end{theorem}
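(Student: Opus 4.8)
The plan is to prove the variational equality by two opposite inequalities and then read off attainment from a compactness argument. Throughout write $a_n(\mu):=\int_{\Sigma_k}\log\|\mathcal{L}(\omega,n)\|\,d\mu(\omega)$.

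\textbf{Step 1 (the inequality $\sup_\mu\lambda(\mu)\le\log\JSR(\mathcal{A})$).} From $\mathcal{L}(\omega,n+m)=\mathcal{L}(\sigma^n\omega,m)\,\mathcal{L}(\omega,n)$, submultiplicativity of the norm, and the $\sigma$-invariance of $\mu$, the sequence $\big(a_n(\mu)\big)_{n\ge1}$ is subadditive; by Fekete's lemma the limit in the definition of $\lambda(\mu)$ exists and equals $\inf_n\frac1n a_n(\mu)$. Since $\|\mathcal{L}(\omega,n)\|\le\sup_{\Pi\in\mathcal{A}^{*n}}\|\Pi\|$ for every $\omega$, we get $\frac1n a_n(\mu)\le\frac1n\log\sup_{\Pi\in\mathcal{A}^{*n}}\|\Pi\|$, whose right-hand side tends to $\log\JSR(\mathcal{A})$ as $n\to\infty$; taking the supremum over $\mu$ finishes this direction.

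\textbf{Step 2 (the inequality $\sup_\mu\lambda(\mu)\ge\log\JSR(\mathcal{A})$, via periodic measures).} Given a word $w=w_1\cdots w_n$ over $\{0,\dots,k-1\}$ with product $\Pi_w=A_{w_n}\cdots A_{w_1}\in\mathcal{A}^{*n}$, let $\omega=\overline{w}\in\Sigma_k$ be the corresponding periodic point and $\mu_w=\frac1n\sum_{i=0}^{n-1}\delta_{\sigma^i\omega}$ the associated periodic measure. Evaluating $a_{qn}(\mu_w)$ writes it as the orbit-average over the $n$ cyclic permutations of $\Pi_w$ of $\log\|(\cdot)^q\|$; dividing by $qn$ and letting $q\to\infty$, Gelfand's formula together with the invariance of the spectral radius under cyclic permutation gives $\lambda(\mu_w)=\frac1n\log\rho(\Pi_w)$ (read as $-\infty$ when $\rho(\Pi_w)=0$). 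Hence $\sup_\mu\lambda(\mu)\ge\sup_{n\ge1}\frac1n\log\sup_{\Pi\in\mathcal{A}^{*n}}\rho(\Pi)\ge\limsup_{n\to\infty}\frac1n\log\sup_{\Pi\in\mathcal{A}^{*n}}\rho(\Pi)$, which by the Berger--Wang theorem (\cref{e.BW}) equals $\log\JSR(\mathcal{A})$. Steps 1 and 2 together give the equality.

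\textbf{Step 3 (attainment at an ergodic measure).} If $\JSR(\mathcal{A})=0$ then by Step 1 $\lambda\equiv-\infty$ and any ergodic measure is a maximizer (such measures exist since $\mathcal{M}_\sigma\ne\emptyset$), so assume $\JSR(\mathcal{A})>0$. The set $\mathcal{M}_\sigma$ is non-empty and weak-$*$ compact. For each $n$, $\omega\mapsto\log\|\mathcal{L}(\omega,n)\|$ depends only on $\omega_1,\dots,\omega_n$, hence is locally constant; after a harmless truncation to keep values finite, $\mu\mapsto\frac1n a_n(\mu)$ is a decreasing limit of weak-$*$ continuous functionals, so $\lambda=\inf_n\frac1n a_n$ is upper semicontinuous and attains its maximum over $\mathcal{M}_\sigma$ at some $\mu^\ast$. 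Moreover $\lambda$ is affine, being a pointwise limit of the linear functionals $\mu\mapsto\frac1n a_n(\mu)$. By Bauer's maximum principle, an upper semicontinuous convex (in particular affine) function on a compact convex set attains its maximum at an extreme point, and the extreme points of $\mathcal{M}_\sigma$ are precisely the ergodic measures; this produces an ergodic maximizer. (Equivalently: take the ergodic decomposition of $\mu^\ast$ and use affineness of $\lambda$ together with the uniform bound $\frac1n a_n(\nu)\le\frac1n\log\sup_{\Pi\in\mathcal{A}^{*n}}\|\Pi\|$ to conclude that $\mu^\ast$-almost every ergodic component is itself a maximizer.)

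\textbf{Where the work is.} The soft steps are 1 and 3; the real content is Step 2, where one must (i) verify carefully that periodic measures realize $\lambda(\mu_w)=\frac1n\log\rho(\Pi_w)$ — keeping the convention $\mathcal{L}(\omega,n)=A_{\omega_n}\cdots A_{\omega_1}$ consistent with $\Pi_w$ up to cyclic permutation and allowing $\rho(\Pi_w)=0$ — and (ii) invoke Berger--Wang to upgrade ``supremum over finite products of spectral radii'' (which periodic measures compute directly) to $\log\JSR(\mathcal{A})$. Without Berger--Wang one only obtains the trivial lower bound from $\rho(\cdot)\le\|\cdot\|$, which does not suffice.
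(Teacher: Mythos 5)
The paper offers no proof of this statement at all --- it is quoted as a black box from Morris's paper --- so there is no internal argument to compare yours against. Your proof is correct and is essentially the standard route to this variational principle: Fekete's lemma for the upper bound, evaluation of $\lambda$ on periodic measures via Gelfand's formula plus Berger--Wang for the lower bound, and weak-$*$ compactness with affineness of $\lambda$ for attainment at an ergodic measure. Two small points deserve the extra sentence you partly supply. First, in Step 2 you compute the limit only along the subsequence $qn$; this suffices precisely because Step 1 already guarantees (via subadditivity) that the full limit defining $\lambda(\mu_w)$ exists, so it is worth making that dependence explicit. Second, the upper semicontinuity of $a_n$ when some product $\mathcal{L}(\omega,n)$ vanishes does require the truncation $\max\bigl(\log\|\mathcal{L}(\omega,n)\|,-m\bigr)$ and monotone convergence, as you indicate; with that, $\lambda=\inf_n \tfrac1n a_n$ is u.s.c.\ and Bauer's maximum principle (or, more transparently, the ergodic decomposition together with affineness of $\lambda$ and the uniform upper bound from Step 1) yields the ergodic maximizer. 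Note that what you prove is strictly weaker than the full content of the cited reference (which constructs Mather sets and proves more refined structure), but it is exactly the statement the paper actually uses, and your argument relies on nothing beyond \cref{e.BW}, which the paper also takes as given.
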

Any measure $\mu$ attaining the supremum is called a \emph{maximizing measure}.
\begin{note}
Each SMP gives rise to a maximizing measure supported on a periodic orbit. Two SMPs give rise to the same maximizing measure if and only if one is a cyclic permutation of the other.
\end{note}
To define Sturmian maximizing measures, we first define Sturmian sequences:
\begin{definition}
Given $\gamma,\rho \in [0,1]$ we define two sequences in $\Sigma_2$: \begin{align*} 
(\omega_{\gamma,\rho}^-)_n&=\lfloor \gamma (n+1) + \rho \rfloor-\lfloor \gamma n + \rho \rfloor\ \\
(\omega_{\gamma,\rho}^+)_n&=\lceil \gamma (n+1) + \rho \rceil-\lceil \gamma n + \rho \rceil
\end{align*}
The sequence $\omega_{\gamma,\rho}^-$ is called \emph{lower Sturmian} and $\omega_{\gamma,\rho}^+$ is called \emph{upper Sturmian} 
  of slope $\gamma$ and intercept $\rho$.
\end{definition}
\begin{note}
These are also called \emph{mechanical words}. See \cite[p.53]{Lothaire} for a more details.
\end{note}
The theorem below is, to our knowledge, folklore and describes all the properties of Sturmian sequences and measures we will need. Most of this theorem is proved in detail in \cite{finiteness}.\begin{theorem}[Folklore]\label{Sturmian}
Denote $X_\gamma$ as the set of all Sturmian sequences of slope $\gamma$. Then we have the following properties:
\begin{enumerate}
    \item $X_\gamma$ is a compact and invariant under the shift.
    \item The shift is uniquely ergodic on $X_\gamma$. This measure $\mu_\gamma$ is called the Sturmian measure of parameter $\gamma$.
    \item The map $\gamma \mapsto \mu_\gamma$ where $\gamma \in [0,1]$ is continuous. Hence the set of all Sturmian measures is compact.
\end{enumerate}
\end{theorem}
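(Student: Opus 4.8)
The plan is to realize the subshift $(X_\gamma,\sigma)$ as a symbolic coding of the circle rotation $R_\gamma\colon\mathbb{T}\to\mathbb{T}$, $t\mapsto t+\gamma\pmod 1$, and to deduce all three assertions from classical facts about $R_\gamma$. Unwinding the floor/ceiling definitions, one checks that $(\omega^-_{\gamma,\rho})_n=1$ exactly when $\{\gamma n+\rho\}\in[1-\gamma,1)$ and $(\omega^+_{\gamma,\rho})_n=1$ exactly when $\{\gamma n+\rho\}\in(1-\gamma,1]$; in other words, both $\omega^-_{\gamma,\rho}$ and $\omega^+_{\gamma,\rho}$ are the $R_\gamma$-itinerary of the point $\rho$ relative to a half-open arc of length $\gamma$, the superscript recording only which of the two endpoints of the arc is included. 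Consequently $\sigma$ carries $\omega^\pm_{\gamma,\rho}$ to $\omega^\pm_{\gamma,\rho+\gamma}$, so $X_\gamma$ is $\sigma$-invariant. For irrational $\gamma$ the orbit $\{\sigma^n\omega^-_{\gamma,0}:n\ge0\}=\{\omega^-_{\gamma,\{n\gamma\}}\}$ has intercepts dense in $[0,1)$, and a short analysis shows that its closure is exactly $\{\omega^\pm_{\gamma,\rho}:\rho\in[0,1)\}=X_\gamma$: a convergent subsequence of the orbit has convergent intercepts, and its limit is the upper or the lower Sturmian sequence of the limiting intercept, the two arising according to the side from which the intercepts approach at the (for irrational $\gamma$, at most one) coordinate where the orbit point hits an endpoint of the arc. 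For rational $\gamma$, $X_\gamma$ is a single finite periodic orbit. Either way $X_\gamma$ is a closed $\sigma$-invariant subset of the compact space $\{0,1\}^{\mathbb{N}}$, hence compact; this is $(1)$.

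For $(2)$: when $\gamma$ is rational, $X_\gamma$ is finite and unique ergodicity is immediate. When $\gamma$ is irrational, the coding gives a continuous surjection $\pi\colon X_\gamma\to\mathbb{T}$ with $\pi\circ\sigma=R_\gamma\circ\pi$, where $\pi(\omega)$ is the unique intercept realizing $\omega$; and $\pi$ is a bijection off the countable set $\pi^{-1}(D)$, with $D\subset\mathbb{T}$ the set of intercepts whose $R_\gamma$-orbit meets an endpoint of the arc. If $\nu$ is any $\sigma$-invariant probability on $X_\gamma$, then $\pi_*\nu$ is $R_\gamma$-invariant, hence equals Lebesgue measure by unique ergodicity of the irrational rotation (Weyl equidistribution); moreover Sturmian sequences of irrational slope are aperiodic, so $X_\gamma$ contains no periodic points, so $\nu$ has no atoms and therefore gives zero mass to the countable set $\pi^{-1}(D)$. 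Hence $\nu$ is the $\pi$-pullback of Lebesgue measure, which is unique; this proves unique ergodicity and identifies $\mu_\gamma$.

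For $(3)$ I would use a soft compactness argument rather than computing factor frequencies directly. Fix $\gamma_k\to\gamma$. Since the set of probability measures on $\{0,1\}^{\mathbb{N}}$ is weak-$*$ compact, it suffices to show that every weak-$*$ limit point $\mu$ of $(\mu_{\gamma_k})$ equals $\mu_\gamma$. Such a $\mu$ is $\sigma$-invariant, being a weak-$*$ limit of $\sigma$-invariant measures, and it is supported on $X_\gamma$ because any limit of Sturmian sequences of slopes $\gamma_k\to\gamma$ lies in $X_\gamma$ — the same endpoint-of-the-arc analysis as in $(1)$, now carried out with the slope also varying. By $(2)$ this forces $\mu=\mu_\gamma$, so $\mu_{\gamma_k}\to\mu_\gamma$. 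Compactness of the set of all Sturmian measures then follows, being the image of the compact interval $[0,1]$ under the continuous map $\gamma\mapsto\mu_\gamma$.

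The step I expect to be the main obstacle is the bookkeeping at \emph{rational} slopes, where $X_\gamma$ collapses from an infinite set to a single periodic orbit: one must check carefully that a sequence of Sturmian words of irrational slopes $\gamma_k\to p/q$ can converge only inside $X_{p/q}$, i.e.\ that the isolated ``defects'' of a near-periodic Sturmian word are forced to become sparse as the slope tends to $p/q$, and keep track of how the upper/lower conventions interact in the limit. Once that is settled, everything else reduces to unique ergodicity of irrational rotations together with aperiodicity of irrational-slope Sturmian words. Full details can be found in \cite{finiteness}.
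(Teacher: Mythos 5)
The paper does not actually prove this theorem; it is stated as folklore with a pointer to \cite{finiteness}, so there is no internal argument to compare against. Judged on its own terms, your parts (1) and (2) are the standard and correct route: the rotation coding, the orbit-closure description of $X_\gamma$, and unique ergodicity via the almost-everywhere one-to-one factor map onto the irrational rotation (plus aperiodicity to rule out atoms) all go through.

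Part (3), however, rests on a claim that is false at rational slopes, and this is precisely the point you flagged as the "main obstacle." It is \emph{not} true that a limit of Sturmian sequences of slopes $\gamma_k\to p/q$ must lie in $X_{p/q}$. Concretely, take $\gamma_k=\tfrac12+\epsilon_k$ with $\epsilon_k\downarrow 0$ irrational; by choosing the intercepts $\rho_k$ so that the (width $\approx 2\epsilon_k$) window producing a "defect" block $11$ is hit at a fixed coordinate, say $n=5$, the sequences $\omega^-_{\gamma_k,\rho_k}$ converge to $0101011010101\cdots$, which is a so-called skew (balanced but eventually periodic, non-periodic) word and is not in $X_{1/2}=\{(01)^\infty,(10)^\infty\}$. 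The defects become sparse in \emph{density} but a single defect can persist at a bounded position, so the support-level upper semicontinuity you invoke fails, and with it the assertion that every weak-$*$ limit of $(\mu_{\gamma_k})$ is supported on $X_\gamma$. The conclusion is nevertheless true, but the argument must be run at the level of measures rather than supports: for a cylinder $[w]$ of length $N$ with $w$ not a factor of the slope-$p/q$ periodic orbit, the rotation coding gives $\mu_{\gamma_k}([w])\le CN|\gamma_k-p/q|\to 0$, because such a $w$ can only occur at the defect windows, whose total length on the circle is $O(N|\gamma_k-p/q|)$; hence any limit measure is supported on $X_{p/q}$ and unique ergodicity finishes the proof. (Equivalently, one can compute the factor frequencies $\mu_\gamma([w])$ explicitly as continuous functions of $\gamma$, which is the computation you chose to avoid.) At irrational $\gamma$ your soft argument is fine, since a pointwise limit of balanced sequences with $1$-frequencies tending to an irrational $\gamma$ is balanced of irrational frequency, hence aperiodic, hence Sturmian of slope $\gamma$.
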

\section{What are the possible SMPs if $(A,B) \in \mathcal{R}_{copar}$ ?}\label{section 9}
In this section, we describe the possible SMPs given that $(A,B)$ is co-parallel. We remind the reader, that given a word $w \in \{0,1\}^*$ and matrices $A,B$, we denote $w(A,B)$ as the matrix product given by replacing each $0$ with $A$ and $1$ with $B$. We begin this section with introducing special words that come from Sturmian sequences.
\begin{definition}\label{e. def of christoffel}
Let $\frac{p}{q} \in\mathbb{Q}\cap[0,1]$ with $\gcd(p,q)=1$ and $\rho\in[0,1]$.
\begin{itemize}
    \item $q$-prefix of $\omega_{\frac{p}{q},\rho}^-$ is called a \emph{Sturmian} word
    \item $q$-prefix of $\omega_{\frac{p}{q},0}^-$ is called a \emph{Christoffel} word
\end{itemize}
\end{definition}
\begin{note}
All Sturmian words are primitive. Christoffel words are exactly the Sturmian words that are Lyndon. When we refer to a Sturmian/Christoffel product we mean a product $w(A,B)$ where $w$ is a Sturmian/Christoffel word. We will similarily refer to SMPs being Sturmian/Christoffel.
\end{note}
All Christoffel words can be iteratively obtained using the \emph{Christoffel tree} which is constructed as follows. Each node will be a pair of words. The root is $(0,1)$. Each node $(u,v)$ has two children, the left child is $(u,uv)$ and the right child is $(uv,v)$( See \cref{Chris tree} for the first three layers of the tree.).
\begin{theorem}\cite{BerstelLuca}\label{chris cop}
The Christoffel tree contains exactly once the decomposition $(u,v)$ of each Christoffel word $w=uv$ of length greater than $1$. All words in each node are Christoffel.
\end{theorem}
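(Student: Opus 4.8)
The plan is to identify the Christoffel tree with the Stern--Brocot (Farey) tree of rationals in the open interval $(0,1)$, and then transport the classical enumeration of rationals into an enumeration of Christoffel words via a single key lemma, the \emph{mediant--concatenation lemma}: if $\tfrac{p_1}{q_1}<\tfrac{p_2}{q_2}$ are Farey neighbours, meaning $q_1p_2-q_2p_1=1$, then the Christoffel word of the mediant $\tfrac{p_1+p_2}{q_1+q_2}$ is the concatenation $w_{p_1/q_1}\,w_{p_2/q_2}$ of the Christoffel words of the two neighbours, in this order. Here $w_{p/q}$ denotes the Christoffel word of slope $p/q$ in the sense of \cref{e. def of christoffel}, with the convention $w_{0/1}=0$ and $w_{1/1}=1$; these are precisely the Christoffel words of length $1$, and $p/q\mapsto w_{p/q}$ is a bijection from rationals of $[0,1]$ in lowest terms onto the set of Christoffel words, with length $>1$ corresponding exactly to $p/q\in(0,1)$ (since the length is $q$ and the number of $1$'s is $p$, so the word determines the slope).

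First I would set up the labelling. Assign to the root $(0,1)$ the Farey pair $(\tfrac01,\tfrac11)$, and prove by induction on depth that every node $(u,v)$ carries a pair $(\tfrac{p_1}{q_1},\tfrac{p_2}{q_2})$ with $\tfrac{p_1}{q_1}<\tfrac{p_2}{q_2}$, $q_1p_2-q_2p_1=1$, and $u=w_{p_1/q_1}$, $v=w_{p_2/q_2}$. For the inductive step, the mediant--concatenation lemma gives $uv=w_{(p_1+p_2)/(q_1+q_2)}$; writing $\tfrac pq$ for the mediant, a direct check gives $q_1p-qp_1=q_1p_2-q_2p_1=1$ and $qp_2-q_2p=q_1p_2-q_2p_1=1$, so the left child $(u,uv)$ carries the Farey pair $(\tfrac{p_1}{q_1},\tfrac pq)$ and the right child $(uv,v)$ carries $(\tfrac pq,\tfrac{p_2}{q_2})$, with word labels as required. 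In particular every word occurring in any node is a Christoffel word, which is the second assertion of the theorem.

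Next I would read off the first assertion. By construction the word $uv$ at a node equals $w_{p/q}$, where $\tfrac pq$ is the mediant of that node's Farey pair; and the collection of these mediants, as the node ranges over the whole tree, is exactly $\mathbb{Q}\cap(0,1)$ in lowest terms, each value arising at exactly one node. This last point is the classical statement that the Stern--Brocot refinement scheme enumerates $\mathbb{Q}\cap(0,1)$ bijectively, proved by induction on the denominator: a rational $\tfrac pq$ with $q\ge 2$ has a unique pair of Farey parents, of strictly smaller denominators, which by induction sit together at a unique node, thereby placing $\tfrac pq$ at that node and nowhere else. Since $\tfrac pq\mapsto w_{p/q}$ is injective, each Christoffel word $w$ of length $>1$ equals $uv$ for exactly one node $(u,v)$, and that node exhibits a decomposition $w=uv$ into Christoffel words. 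This is precisely \cref{chris cop}.

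The hard part will be the mediant--concatenation lemma; everything else is bookkeeping. The cleanest route is geometric: realise $w_{p/q}$ as the lattice path from $(0,0)$ to $(q,p)$ built from unit East steps (for the letter $0$) and unit North steps (for the letter $1$) that stays weakly below the segment $[(0,0),(q,p)]$ and bounds, with that segment, a region containing no interior lattice point. For Farey neighbours with mediant $\tfrac pq=\tfrac{p_1+p_2}{q_1+q_2}$, the relation $q_1p_2-q_2p_1=1$ forces $(q_1,p_1)$ to lie just below the segment from $(0,0)$ to $(q,p)$ and to be the unique interior lattice point of the path nearest that segment (there $qj-pi=-1$, the minimal possible nonzero value), hence it is the split point of the path. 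A short lattice-point count, again using $q_1p_2-q_2p_1=1$ to rule out lattice points strictly between the line of slope $\tfrac pq$ and the line of slope $\tfrac{p_1}{q_1}$ over $[0,q_1]$, and likewise for $\tfrac{p_2}{q_2}$ over the translated range, then shows that the first sub-path from $(0,0)$ to $(q_1,p_1)$ is the Christoffel path of slope $\tfrac{p_1}{q_1}$ and the second sub-path is the translate of the Christoffel path of slope $\tfrac{p_2}{q_2}$, giving $w_{p/q}=w_{p_1/q_1}\,w_{p_2/q_2}$. An alternative, picture-free proof manipulates the Beatty-type formulas of \cref{e. def of christoffel} directly, but the combinatorics there are more opaque; I would prefer the lattice-path argument.
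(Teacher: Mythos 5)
The paper does not prove this theorem: it is quoted from Berstel--de Luca, so there is no in-paper argument to compare against. Your proof is correct and is essentially the classical one: identify the Christoffel tree with the Stern--Brocot tree of $\mathbb{Q}\cap[0,1]$ via the slope bijection $p/q\mapsto w_{p/q}$, and reduce everything to the mediant--concatenation (standard factorization) lemma, whose lattice-path proof you sketch accurately -- the condition $q_1p_2-q_2p_1=1$ makes $(q_1,p_1)$ the lattice point strictly between the endpoints that is closest below the segment to $(q,p)$, and the unimodularity rules out lattice points between the chord of slope $p/q$ and the chords of the two parents, so the two sub-paths are the parents' Christoffel paths. The bookkeeping (children of a Farey-neighbour pair are again Farey-neighbour pairs; the Stern--Brocot scheme produces each rational of $(0,1)$ exactly once as a mediant) is standard and correctly invoked. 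One small caveat: what you actually establish is that each Christoffel word $w$ with $|w|>1$ arises as $uv$ at exactly one node, with $u,v$ Christoffel; the literal statement ``contains exactly once \emph{the} decomposition of $w$'' also presupposes that $w$ admits no other factorization into two Christoffel words (uniqueness of the standard factorization, also due to Berstel--de Luca and Borel--Laubie), which your argument does not address. Since the paper only uses that every Christoffel word occurs in the tree and that all node entries are Christoffel (see \cref{all coparallel}), this omission is harmless for the application.
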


\Tree [.$(0,1)$ [.$(0,01)$ [.$(0,001)$  {\ldots} {\ldots} ] [.$(001,01)$  {\ldots} {\ldots} ] ]  [.$(01,1)$ [.$(01,011)$ {\ldots} {\ldots} ] [.$(011,1)$  {\ldots} {\ldots} ] ] ]
\begin{figure}[h]
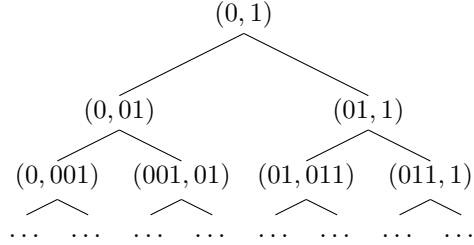

    \caption{The Christoffel tree}
    \label{Chris tree}
\end{figure}

We now prove a few lemmas that will be needed in the main theorem.
\begin{lemma}\label{product of coparallel}
If $(A,B)$ is co-parallel, then $AB$ is co-parallel to both $A$ and $B$, with its axis lying between the axis of $A$ and of $B$. Furthermore $(A^n,B^k)$ is also co-parallel for any $n,k\geq 1$ with the same axis as $A$ and $B$.
\end{lemma}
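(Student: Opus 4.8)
The plan is to work in the upper half-plane model and use the hyperbolic geometry of the translation axes directly. Let $\alpha_A, \alpha_B$ denote the oriented translation axes of $A$ and $B$. Since $(A,B)$ is co-parallel, these axes are disjoint geodesics and the orientations they induce on the strip (or half-plane) bounded by them are opposite. The first task is to locate the axis of $AB$. I would use the standard fact that for $A, B \in GL^+(2,\mathbb{R})$ real diagonalizable, $AB$ is again real diagonalizable with positive determinant precisely when $|\tr(AB)|$ is large enough — and here \cref{algebraic desc regions} already gives $z > \frac{xy}{2} + \frac12\sqrt{(x^2-4u)(y^2-4v)} > 0$, so $AB$ is hyperbolic (real diagonalizable, positive determinant) and has a genuine translation axis. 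The fixed points of $f_{AB}$ on $\hat{\mathbb R}$ are the roots of a quadratic; I would show by a fixed-point/continuity argument — or by direct computation with the endpoints of $\alpha_A$ and $\alpha_B$ — that the attracting fixed point of $f_{AB}$ lies on the same side as the attracting fixed point of whichever of $A,B$ "points into" the configuration, and that both fixed points of $AB$ lie strictly between the four fixed points $\{x_A,y_A,x_B,y_B\}$ in the appropriate cyclic sense. Concretely: conjugate so that $\alpha_A$ is the imaginary axis (fixed points $0,\infty$), write $B$ explicitly with its two real fixed points of the same sign (this is exactly the non-crossing condition, cf. the proof of \cref{crossing}), and check that $f_{AB}(0)$ and $f_{AB}(\infty)$ land strictly inside the interval cut out by $B$'s fixed points; then the attracting/repelling structure forces $\alpha_{AB}$ to be disjoint from both $\alpha_A$ and $\alpha_B$ and to sit between them.

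Next I would pin down the orientations. Having placed $\alpha_{AB}$ strictly between $\alpha_A$ and $\alpha_B$, the pair $(A,AB)$ and the pair $(AB,B)$ each consist of disjoint axes, so each is either co-parallel or anti-parallel; I must rule out anti-parallel. For this I would track which endpoint of $\alpha_{AB}$ is attracting: since $\det(AB)>0$ and $\tr(AB)>0$, $f_{AB}$ translates towards its attracting fixed point, and the explicit coordinates from the previous paragraph identify that fixed point as the one on the side of $B$'s attractor (say). Comparing this with the orientation of $\alpha_A$ — which points towards $\infty$ or $0$ depending on the sign of $a_1-a_2$ — one reads off that $(A, AB)$ induces opposite orientations on the region between them, i.e. is co-parallel, and symmetrically for $(AB,B)$. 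This is the step I expect to be the main obstacle: it is the one place where a sign can silently flip, so I would organize it by fixing an explicit normalization ($\alpha_A$ = imaginary axis, $A = \mathrm{diag}(a_1,a_2)$ with $a_1 > a_2 > 0$, and $B$'s fixed points both positive with the attractor being the larger one, say), and then simply computing $f_{AB}(0)$, $f_{AB}(\infty)$ and the derivative of $f_{AB}$ at its fixed points to read off attractor vs. repellor. Everything then follows from the geometry already set up; no further ideas are needed, only bookkeeping.

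Finally, for the claim that $(A^n, B^k)$ is co-parallel with the same axis as $A$ and $B$: this is immediate, since $A^n$ and $A$ have the same fixed points on $\hat{\mathbb R}$ (the fixed points of $f_A$ are intrinsic to $f_A$, and $f_{A^n} = f_A^{\,n}$ has exactly the same two fixed points), hence the same translation axis, and the same attracting/repelling structure because $a_1^n > a_2^n$ when $a_1 > a_2 > 0$; likewise for $B^k$. Since $A^n, B^k$ have precisely the translation axes $\alpha_A, \alpha_B$ with unchanged orientations, $(A^n,B^k)$ is co-parallel by the very definition of co-parallel. One should note $A^n, B^k$ remain irreducible (they share no eigenvector since $A,B$ don't) and real diagonalizable with positive determinant, so the definition applies. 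This completes the proof.
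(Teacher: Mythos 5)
Your second paragraph's final claim (the statement about $(A^n,B^k)$) is handled essentially as in the paper: same fixed points, hence same axes, with the only thing to check being that powers preserve the attracting/repelling structure (the paper phrases this as ruling out $A^n$ being a multiple of the identity). One small care point there: your normalization $a_1>a_2>0$ presumes $\tr A>0$; since only $\det A>0$ is guaranteed, both eigenvalues could be negative, and you should note that negating $A$ changes neither $f_A$ nor the axis, or argue directly that positive determinant forces the eigenvalues to have the same sign so that the dominant eigendirection of $A^n$ is that of $A$.

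For the first assertion, however, you are taking a genuinely different route from the paper: the paper does not prove it at all, but cites \cite[Lemma 4.2]{JorgensenSmith}. Your plan — normalize $\alpha_A$ to the imaginary axis, verify $AB$ is hyperbolic via \cref{algebraic desc regions} (that step is correct and complete: $z>\tfrac{xy}{2}\ge 2\sqrt{uv}$ with strictness, so $z^2>4uv$), and then locate the axis of $f_{AB}$ by explicit computation — is viable, but as written it is an outline whose decisive steps are precisely the content of the lemma and are deferred to ``bookkeeping.'' Two gaps in particular. First, knowing where $f_{AB}$ sends the two fixed points of $f_A$ does not by itself locate the two fixed points of $f_{AB}$; you need something like: $f_{AB}$ maps one of the two ideal-boundary intervals of the strip between $\alpha_A$ and $\alpha_B$ into itself (giving a fixed point there by the intermediate value theorem), and $f_{AB}^{-1}=f_{B^{-1}A^{-1}}$ does the same for the other interval, so that one endpoint of $\alpha_{AB}$ lands in each interval — which is what ``lying between'' actually requires. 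Second, the orientation check (co-parallel versus anti-parallel for $(A,AB)$ and $(B,AB)$) is exactly the sign computation you flag as the main obstacle and do not carry out; until it is done the proof is not complete. So the architecture is sound and would, if executed, give a self-contained replacement for the citation to Jorgensen--Smith, but in its current state the first part is a proof plan rather than a proof.
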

\begin{proof}
The first part is proved in \cite[Lemma 4.2]{JorgensenSmith}. The second part follows from the fact that $A^n$ and $B^k$ have exactly the same eigenvectors as $A$ and $B$. The only way that would not be the case is if $A^n$ or $B^k$ are multiples of identity. However, that is impossible since that would mean $A$ or $B$ are conjugate to a multiple of a rational rotation and therefore not real diagonalizable.
\end{proof}
 To make the notation more manageable in the following corollary $u$ should be understood as the product $u(A,B)$.
\begin{corollary}\label{all coparallel}
Let $(A,B)$ be co-parallel. Then a Christoffel product $w(A,B)$ is co-parallel to any other 
Christoffel product $w'(A,B)$.
\end{corollary}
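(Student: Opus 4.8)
The plan is to prove this by induction on the structure of the Christoffel tree (Theorem \ref{chris cop}), using Lemma \ref{product of coparallel} as the inductive engine. Observe first that it suffices to show: \emph{for every node $(u,v)$ of the Christoffel tree, the products $u(A,B)$ and $v(A,B)$ are co-parallel to each other, and moreover each is co-parallel to $A$ and to $B$, with axis lying in the closed region bounded by the axes of $A$ and $B$.} Indeed, every Christoffel word of length $>1$ appears as $w=uv$ for some node $(u,v)$ by Theorem \ref{chris cop}, and the single-letter Christoffel words are just $0$ and $1$, i.e. $A$ and $B$ themselves; so once we know each Christoffel product is co-parallel to $A$ with axis trapped between the axes of $A$ and $B$, two such products $w(A,B)$ and $w'(A,B)$ both have axes in that region and the same induced orientation (inherited from $A$, $B$), hence by the algebraic characterization of $\mathcal{R}_{copar}$ (or directly by the geometric definition) they are co-parallel to each other. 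The only mild subtlety is handling the non-strict "between", i.e. the possibility that an axis coincides with the axis of $A$ or $B$; but Lemma \ref{product of coparallel} gives that $AB$ has axis \emph{strictly} between, and any node word is built by genuine concatenations, so strictness propagates and the axes stay in the open region strictly between those of $A$ and $B$ for all non-letter words.

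For the induction itself: the root is $(0,1) \mapsto (A,B)$, which is co-parallel by hypothesis, and trivially each of $A,B$ is co-parallel to $A$ and $B$ with axis (its own) in the closed bounding region. For the inductive step, suppose a node $(u,v)$ satisfies the invariant, so $P := u(A,B)$ and $Q := v(A,B)$ are co-parallel, each co-parallel to $A$ and $B$ with axes in the region. Its children are $(u,uv)$ and $(uv,v)$, corresponding to the matrix pairs $(P, PQ)$ and $(PQ, Q)$. By Lemma \ref{product of coparallel} applied to the co-parallel pair $(P,Q)$, the product $PQ$ is co-parallel to both $P$ and $Q$, with its axis lying strictly between the axes of $P$ and $Q$. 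Since the axes of $P$ and $Q$ both lie in the region bounded by the axes of $A$ and $B$, so does the axis of $PQ$; and the orientation $PQ$ induces on the region it bounds with $A$ (resp. $B$) is the same as that of $P$ and $Q$ (all inherited consistently, since co-parallelism is an equivalence-like condition among axes sharing a common "side"). Hence both $(P,PQ)$ and $(PQ,Q)$ are co-parallel and each of $P$, $PQ$, $Q$ is co-parallel to $A$ and to $B$ with axis in the region. This establishes the invariant for the children, completing the induction.

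The main obstacle I anticipate is the bookkeeping around \emph{orientations}: "co-parallel" versus "anti-parallel" is determined not just by the axes being disjoint and nested but by the relative orientation they induce on the bounded region, and one must check that this orientation is preserved under the tree operations. The clean way around this is to reduce to the algebraic criterion of Theorem \ref{algebraic desc regions}: normalize via the rescaling invariance \eqref{invariance} so that $A,B \in SL(2,\mathbb{R})$ with positive traces, note all relevant products land in $GL^+(2,\mathbb{R}) \cap \mathcal{D}^2$ (by Lemma \ref{product of coparallel}, which already guarantees real diagonalizability of the products since their eigenvectors are those of $A,B$), and then co-parallelism of a pair $(M,N)$ is simply the inequality $\tr(MN) > \tfrac12 \tr(M)\tr(N) + \tfrac12\sqrt{(\tr(M)^2-4)(\tr(N)^2-4)}$ together with $\det M, \det N > 0$. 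In the $SL(2,\mathbb{R})$ setting this is exactly Corollary \ref{alt}'s type of inequality $\rho(MN) > \rho(M)\rho(N)$, which composes transitively along the tree. Either route works; I would present the geometric one for intuition and remark that the algebraic criterion makes the orientation bookkeeping automatic.
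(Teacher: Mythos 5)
There is a genuine gap. Your reduction in the first paragraph --- ``once each Christoffel product is co-parallel to $A$ and to $B$ with axis trapped between the axes of $A$ and $B$, any two such products are co-parallel to each other'' --- does not follow. Two hyperbolic geodesics that both lie in the region bounded by the axes of $A$ and $B$, even with consistently induced orientations, can perfectly well \emph{cross} each other: their endpoints on $\hat{\mathbb{R}}$ lie in the two arcs separating the endpoints of $A$ and $B$, and nothing in your invariant prevents those endpoint pairs from interleaving. Your induction does correctly establish co-parallelism of $u(A,B)$ and $v(A,B)$ \emph{within} each node $(u,v)$, but the corollary concerns products coming from two \emph{different} nodes of the tree, and that is exactly where your argument stops short. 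The paper closes this gap by tracing two distinct nodes back to their last common ancestor $(u',v')$: one node is a descendant of $(u',u'v')$ and the other of $(u'v',v')$, so by the inductive ``axis lies between'' statement the two axes lie in the two disjoint sub-regions separated by the axis of $u'v'$; this separating axis is what forces disjointness, and hence co-parallelism.

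Your proposed algebraic escape route does not repair this. Corollary~\ref{alt} gives $\rho(MN)>\rho(M)\rho(N)$ as a \emph{consequence} of co-parallelism, not a characterization (anti-parallel pairs are excluded by a different inequality, and crossing pairs must also be ruled out), and co-parallelism does not ``compose transitively'': if $M$ is co-parallel to $N$ and $N$ to $P$, the axes of $M$ and $P$ can still cross on the same side of the axis of $N$. Any correct proof must produce, for each pair of distinct Christoffel words, a geodesic that separates their two axes; the Christoffel tree's last-common-ancestor structure is precisely the device that supplies it.
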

\begin{proof}
We show any product the Christoffel tree is co-parallel to any other product in the tree. 
Using the \cref{product of coparallel}, it follows by induction that given any node $(u,v)$, for all of its descendants $(u',v')$, $u'$ (resp. $v'$)  is co-parallel with both $u$ and $v$ with its axis laying between the axis of $u$ and $v$.

Now take any two distinct nodes $(u_1,v_1)$ and $(u_2,v_2)$ (We assume one is not a descendant of another, similar argument works here as well) and trace them back to the last common ancestor say $(u',v')$. Without loss of generality, $(u_1,v_1)$ is a descendant of $(u',u'v')$ and $(u_2,v_2)$ is a descendant $(u'v',v')$, otherwise there would be a deeper node in common. However, the axis of $u_1v_1$ must be between both axis of $u',u'v'$ and axis of $u_2v_2$ must be between both axis of $u'v',v'$. Therefore, they must be disjoint. Hence $u_1v_1$ and $u_2v_2$ must be co-parallel.
\end{proof}
\begin{lemma}\label{e.lyap cont}
Let $(A_0,A_1)$ be co-parallel. Then the associated Lyapunov exponent $\lambda: \mathcal{M}_\sigma \to \mathbb{R}$, as defined in \cref{e.lyap}, is continuous in the weak* topology.
\end{lemma}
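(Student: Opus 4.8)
The plan is to reduce continuity of $\lambda$ on all of $\mathcal{M}_\sigma$ to the observation that, when $(A_0, A_1)$ is co-parallel, the cocycle $\mathcal{L}(\omega, n)$ never experiences cancellation: matrices along any orbit compose ``monotonically'' because all the translation axes point the same way across the strip they jointly bound. Concretely, I would first record from \cref{product of coparallel,all coparallel} that every finite product $w(A_0,A_1)$ with $w \neq \emptyset$ is again real diagonalizable with positive determinant and positive trace (after the standard normalization), and that consecutive products stay co-parallel, so that \cref{alt}-type submultiplicativity in reverse holds: $\rho(uv) \geq c\,\rho(u)\rho(v)$ for a uniform constant, or more to the point $\|w(A_0,A_1)\| \asymp \rho(w(A_0,A_1))$ with constants independent of $w$. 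The key structural input is that in the co-parallel regime one can choose a single cone (in $\mathbb{R}^2$, an arc of directions bounded by the attracting fixed directions of $A_0$ and $A_1$) that is mapped strictly inside itself by both $A_0$ and $A_1$; this is exactly what the geometry of non-crossing, oppositely-oriented axes provides. Existence of such an invariant cone makes the cocycle \emph{dominated} (it has a one-dimensional equivariant sub-bundle on which the norm growth equals the top Lyapunov exponent pointwise), and for dominated cocycles the Lyapunov exponent is a continuous — indeed upper-semicontinuous from the Oseledets side and lower-semicontinuous from the subadditive side — function of the invariant measure.

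The key steps, in order, are: (1) produce the common invariant cone $\mathcal{C}$ from co-parallelism, using the fixed-point/axis description in \cref{section 9} and the fact that $f_{A_0}, f_{A_1}$ both contract towards their attractors inside the bounded complementary region; (2) deduce that for every $\omega$, $\frac{1}{n}\log\|\mathcal{L}(\omega,n)v\|$ converges to the same limit for every $v \in \mathcal{C}$, and that this limit equals $\frac1n \log\|\mathcal{L}(\omega,n)\| + O(1/n)$ uniformly, so the ergodic averages defining $\lambda(\mu)$ are averages of a genuinely additive (not merely subadditive) observable up to a uniformly bounded error; (3) identify that additive observable: writing $g(\omega) = \log\|A_{\omega_1} e(\omega)\| / \|e(\omega)\|$ where $e(\omega)$ is the unit vector spanning the equivariant line $\bigcap_{n} \mathcal{L}(\omega,n)\mathcal{C}$ (the ``stable direction''), one has $\lambda(\mu) = \int g \, d\mu$; (4) check $g$ is continuous on $\Sigma_2$ — this is where the strict cone contraction is used, since the map $\omega \mapsto e(\omega)$ is then Hölder (a standard contraction-mapping argument: the cone is shrunk by a definite hyperbolic factor at each step, so $e(\omega)$ depends only on finitely many coordinates up to exponentially small error); (5) conclude that $\mu \mapsto \int g\, d\mu$ is continuous in the weak-$*$ topology, hence $\lambda$ is continuous.

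I expect the main obstacle to be step (1) together with step (4): turning the hyperbolic-geometry statement ``the axes are co-parallel'' into a quantitative, basis-free statement that there is a \emph{single} cone strictly invariant under both generators, with a uniform contraction ratio, and then verifying that this uniformity survives for all finite products (so that the stable direction $e(\omega)$ really is continuous and not just measurable). The cleanest route is probably to conjugate so that the common bounded region is a fixed vertical strip in the upper half-plane, note that $f_{A_0}$ and $f_{A_1}$ then both translate points within the strip monotonically in the same direction, and translate this into: in the linear picture, both matrices map a fixed closed arc of $\mathbb{RP}^1$ (the boundary-at-infinity interval between the two repelling endpoints) strictly into its interior. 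One subtlety to handle carefully is the normalization: \cref{jsr zero}-style rescaling is harmless for $\JSR$ but one should fix scalings of $A_0, A_1$ once and for all before speaking of $\|\mathcal{L}(\omega,n)\|$, since $\lambda$ itself is not scale-invariant; the continuity statement is of course unaffected. A second subtlety is that $g$ as defined involves $e(\omega)$, which a priori depends on the future of $\omega$; one checks it is in fact determined by the past, or else works with the reversed cocycle — either way the continuity is unaffected, but the write-up should pick one convention and stick to it.
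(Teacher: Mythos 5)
Your proposal is correct and follows essentially the same route as the paper: the paper also extracts a common strictly invariant cone from co-parallelism (after normalizing traces to be non-negative), invokes the equivalence with domination to get a continuous equivariant unit section $s(\omega)$, rewrites $\lambda(\mu)$ as the integral of the continuous additive observable $\log\|\mathcal{L}(\omega,1)s(\omega)\|$ via a result of Bochi--Rams and the Birkhoff theorem, and concludes weak-$*$ continuity. The subtleties you flag (strictness of the cone invariance, continuity of the section) are exactly the points the paper delegates to the cited domination literature.
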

\begin{proof}
We may assume $A_0,A_1$ have non-negative trace by possibly negating the matrices. Since they have non-negative trace and positive determinants, they have positive eigenvalues. Hence, there exists a strictly forward invariant cone $C$ such that $A_0(C)\subset C$ and $A_1(C)\subset C$; any cone that contains the attracting eigenvectors in its interior and does not contain the repelling eigenvectors will work. Existence of such a cone is equivalent to the associated cocycle $\mathcal{L}(\omega,n)$ being dominated (see \cite{JG} for proof and definitions). Dominance guarantees existence of a continuous map $s: \Sigma_2 \to \mathbb{R}^2$ such that for any $\omega \in \Sigma_2$ $\|s(\omega)\|=1$ and:
\[\frac{\mathcal{L}(\omega,n)s(\omega)}{\|\mathcal{L}(\omega,n)s(\omega)\|}=s(\sigma^n(\omega)).\]
Furthermore, using \cite[Corollary 2.4]{JairoRams} we know that given $\mu \in \mathcal{M}_\sigma$, for $\mu$ almost every $\omega$:
\[ \lim_{n \to \infty}\frac{1}{n}\log \|\mathcal{L}(\omega,n)\|=\lim_{n \to \infty}\frac{1}{n}\log \|\mathcal{L}(\omega,n)s(\omega)\|\]
Therefore, integrating and applying the dominated convergence theorem:
\[\lambda(\mu)=\int_{\Sigma_2}\lim_{n \to \infty}\frac{1}{n}\log \|\mathcal{L}(\omega,n)s(\omega)\|d\mu(\omega).\]
Using the properties of $s(\omega)$ we may write:
\[\lim_{n \to \infty}\frac{1}{n}\log \|\mathcal{L}(\omega,n)s(\omega)\|=\lim_{n \to \infty}\frac{1}{n} \sum_{i=0}^{n-1} \log \|\mathcal{L}(\sigma^j(\omega),1)s(\sigma^j(\omega))\|\]
Now, by Birkhoff ergodic theorem:
\[\lambda(\mu)=\int_{\Sigma_2}\log{\|\mathcal{L}(\omega,1)s(\omega)\|}d\mu(\omega)\]
Since $\log{\|\mathcal{L}(\omega,1)s(\omega)\|}$ is continuous, $\lambda$ is continuous with respect to the weak* topology.
\end{proof}
We now state another theorem of Jorgensen-Smith, we remind the reader that $W(a,b)$ denotes the set of products that have $a$ $A$'s and $b$ $B$'s.
\begin{theorem}\cite{JorgensenSmith}\label{coparallel}
If $(A,B)$ is co-parallel, then for any pair of positive integers $a$ and $b$, the products that maximize the spectral radius in $W(a,b)$ are cyclic permutations of the prefix of length $a+b$ of the product $\omega_{\frac{b}{a+b},0}^-(A,B)$.
\end{theorem}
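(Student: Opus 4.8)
The goal is \cref{coparallel}: for a co-parallel pair $(A,B)$ and fixed $a,b \ge 1$, the spectral-radius maximizer within $W(a,b)$ is (a cyclic permutation of) the Christoffel-type word $\omega^-_{b/(a+b),0}(A,B)$. The plan is to argue by induction on $a+b$, using the Christoffel tree (\cref{chris cop}) to organize the induction and \cref{product of coparallel,all coparallel} to guarantee that every product we encounter along the way is again a co-parallel matrix — a fact we will exploit repeatedly.

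First I would set up the base case and reduce to a structural statement about swapping adjacent letters. Since $(A,B)$ is co-parallel, by \cref{product of coparallel} the product $AB$ is co-parallel to both $A$ and $B$, with its axis strictly between theirs; iterating, any Christoffel product is co-parallel to any other (\cref{all coparallel}), and in particular all the matrices $u(A,B)$ appearing in the Christoffel tree have positive trace and positive determinant (after normalizing signs as in \cref{e.lyap cont}), hence positive eigenvalues and a common strictly-invariant cone $C$. The key elementary lemma I would isolate is the following ``rearrangement'' fact: if $M, N$ are two matrices that are co-parallel to each other and both preserve the cone $C$, then $\rho(XMNY) \ge \rho(XNMY)$ precisely when the axis of $M$ is ``outward'' relative to that of $N$ in the cyclic order forced by $X, Y$ — equivalently, moving letters toward their Christoffel (balanced) arrangement cannot decrease the spectral radius. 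This is where the hyperbolic-geometry picture does the real work: a product is read as a concatenation of hyperbolic translations along nested axes, and $\rho$ of the product is controlled by how ``aligned'' the successive translation directions are; the most aligned arrangement of $a$ copies of the $A$-direction and $b$ copies of the $B$-direction is exactly the Sturmian/Christoffel one.

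With that lemma, the induction runs as follows. Take $\gcd(a,b) = d$ and first treat the primitive case $d = 1$, where $\omega^-_{b/(a+b),0}$ is a genuine Christoffel word $w = w(a,b)$ with its standard factorization $w = uv$ coming from the Christoffel tree, where $u \in W(a_1,b_1)$, $v \in W(a_2, b_2)$, $a_1 + a_2 = a$, $b_1 + b_2 = b$, and both $u, v$ are themselves Christoffel of strictly smaller length. By the inductive hypothesis, $u(A,B)$ maximizes $\rho$ on $W(a_1,b_1)$ and $v(A,B)$ on $W(a_2,b_2)$, and by \cref{product of coparallel,all coparallel} the pair $(u(A,B), v(A,B))$ is again co-parallel; applying \cref{coparallel} in the base length-$2$ instance (or, more honestly, the rearrangement lemma directly) to this co-parallel pair shows that among all interleavings of $a_1{+}b_1$-fold $u$-type blocks and $a_2{+}b_2$-fold... — more cleanly: any product in $W(a,b)$ can be compared, via a finite sequence of adjacent-transposition moves each of which does not decrease $\rho$ (by the rearrangement lemma, using co-parallelism at every stage), to the word $uv$, which is the unique balanced arrangement; one also checks the comparison is strict unless the word is a cyclic permutation of $uv$, since the axes are genuinely distinct (irreducibility). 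Finally, for the imprimitive case $d > 1$, write $a = d a'$, $b = d b'$ with $\gcd(a',b') = 1$; the balanced word of type $(a,b)$ is the $d$-th power of the Christoffel word $w'$ of type $(a',b')$, the matrix $w'(A,B)$ is co-parallel, and $\rho(\Pi)$ for $\Pi \in W(a,b)$ is maximized by maximizing $\rho$ on the ``period,'' reducing to the primitive case already handled.

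The main obstacle I expect is making the rearrangement lemma both precise and genuinely monotone — i.e., proving that a single adjacent swap $XMNY \mapsto XNMY$ changes $\rho$ in the predicted direction, with the right strictness. Naively $\rho$ is not monotone under arbitrary rearrangements, so one must use co-parallelism essentially: the cone $C$ is forward-invariant, the transfer map of each factor is a contraction in the Hilbert metric on $C$, and the product's spectral radius equals $\lim \|\Pi^k v\|^{1/k}$ for $v$ in the interior of $C$; the swap changes one step of the associated one-dimensional multiplicative cocycle, and the inequality reduces to a convexity/ordering statement about the (positive) entries of the relevant $2\times 2$ products, provable by a direct but careful computation in the basis of eigenvectors of, say, $M$. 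I would either prove this by that explicit two-dimensional computation or — cleaner — cite the relevant lemma of Jorgensen–Smith, since \cref{coparallel} is itself attributed to them; in a self-contained write-up the swap inequality is the one step I would be prepared to grind through in coordinates.
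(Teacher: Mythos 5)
First, a point of comparison: the paper does not prove \cref{coparallel} at all. It is cited from Jorgensen--Smith, and the only argument supplied is the reduction from the general co-parallel case to their setting ($(A,B)\in SL(2,\mathbb{R})^2$ with traces $>2$): rescale the matrices, note that every product in $W(a,b)$ has the same determinant so that maximizing $\rho$ is unaffected, and note that under these hypotheses maximizing the trace is equivalent to maximizing the spectral radius. Your proposal instead attempts to reprove the theorem from scratch, which is a legitimately different route, but it does not succeed as written.

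The genuine gap is the ``rearrangement lemma,'' which is not a technical footnote but the entire content of the theorem, and it is left both unproven and unstated in any checkable form. The criterion ``$\rho(XMNY)\ge\rho(XNMY)$ precisely when the axis of $M$ is outward relative to that of $N$ in the cyclic order forced by $X,Y$'' is not a mathematical statement one can verify or apply; the actual comparison in Jorgensen--Smith is a trace identity computation (the difference $\tr(XMNY)-\tr(XNMY)$ is controlled by commutator traces), and getting its sign right is where all the work lives. Even granting a correct swap inequality, your combinatorial step --- that an arbitrary word in $W(a,b)$ can be carried to the Christoffel word by a sequence of adjacent transpositions, \emph{each} of which is of the $\rho$-nondecreasing orientation --- is asserted without proof, and it is exactly the nontrivial balanced-words combinatorics; your induction via the standard factorization $w=uv$ does not close, since a general element of $W(a,b)$ is not an interleaving of $u$-blocks and $v$-blocks, so the inductive hypothesis on $W(a_1,b_1)$ and $W(a_2,b_2)$ cannot be invoked. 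The strictness needed to conclude that the maximizers are \emph{exactly} the cyclic permutations of the Sturmian prefix is likewise not established. Your fallback of citing ``the relevant lemma of Jorgensen--Smith'' is in effect what the paper does wholesale; if you take that route you should cite the whole theorem and supply only the rescaling/determinant/trace-versus-$\rho$ reduction, which your write-up currently omits.
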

We note that Jorgensen and Smith proved this result under the additional hypothesis $(A,B) \in SL(2,\mathbb{R})^2$ and $\tr{A},\tr{B}>2$. However, the more general statement follows by re-scaling the matrices and noting that all products in $W(a,b)$ have the same determinant. Also, Jorgensen and Smith maximize the trace, but under the hypothesis this is equivalent to maximizing the spectral radius.

We are finally ready to prove 4\textsuperscript{th} part of \cref{intro thm}.
\begin{theorem}
Let $(A,B)$ be co-parallel. There always exists exactly one Sturmian maximizing measure. If an SMP exists, it must be unique (up to cyclic permutations) and be Sturmian.
\end{theorem}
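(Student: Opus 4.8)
The plan is to combine Morris's variational principle with the structural results of Jorgensen--Smith about co-parallel pairs, using the continuity of the Lyapunov exponent (\cref{e.lyap cont}) as the technical glue. First I would establish existence and uniqueness of a Sturmian maximizing measure. By \cref{Sturmian}(3) the set $\{\mu_\gamma : \gamma \in [0,1]\}$ of Sturmian measures is compact, and by \cref{e.lyap cont} the map $\gamma \mapsto \lambda(\mu_\gamma)$ is continuous, hence attains its maximum at some $\gamma^\ast$; this gives a Sturmian maximizing measure provided its value equals $\log \JSR(A,B)$. To see that it does, I would approximate an arbitrary ergodic maximizing measure (which exists by Morris's theorem) by periodic orbits: for periodic words, \cref{coparallel} says that within each class $W(a,b)$ the spectral-radius-maximizing word is the Christoffel word of slope $b/(a+b)$, and the spectral radius of a periodic product is an exponential-Birkhoff-type quantity that is a lower bound for $\log \JSR$. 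Passing to the limit $b/(a+b) \to \gamma$ along a subsequence and using continuity of $\gamma \mapsto \mu_\gamma$ and of $\lambda$, the supremum of $\lambda$ over Sturmian measures is at least $\log\JSR(A,B)$, hence equal to it. Uniqueness of the maximizing $\gamma$ should follow from a strict concavity / strict monotonicity argument: the function $\gamma \mapsto \lambda(\mu_\gamma)$ on the interval where it is maximized must be constant only on a point, because a nondegenerate interval of maximizers would force, via \cref{coparallel} and \cref{all coparallel}, two genuinely different Christoffel products to have the same normalized growth rate, which (as in the arguments of \cref{opposite determinants}) would contradict the strict inequality $\rho(AB) > \rho(A)\rho(B)$ from \cref{alt} propagated through the Christoffel tree.

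Next I would show that if an SMP exists it is Sturmian and unique up to cyclic permutation. Suppose $\Pi = w(A,B)$ is an SMP with $w \in W(a,b)$. By \cref{coparallel}, the spectral radius of $\Pi$ is at most that of the Christoffel product $w_0(A,B)$ of slope $b/(a+b)$, with equality only if $w$ is a cyclic permutation of $w_0$; since $\Pi$ is an SMP and cyclic permutation does not change the spectral radius, $w$ must itself be (a cyclic permutation of) the Christoffel word $w_0$, hence $w$ is Sturmian. For uniqueness, suppose two Sturmian words $w_1 \in W(a_1,b_1)$ and $w_2 \in W(a_2,b_2)$ both give SMPs, with distinct slopes $\gamma_1 = b_1/(a_1+b_1) \neq \gamma_2 = b_2/(a_2+b_2)$. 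The associated periodic measures are then both maximizing, and I would derive a contradiction by combining the two periodic orbits: concatenating the two Christoffel products yields, by \cref{all coparallel} and the co-parallel product rule in \cref{product of coparallel}, a new co-parallel product whose axis lies strictly between the two, and an estimate in the spirit of \cref{alt} --- that mixing two strictly co-parallel axes strictly increases the normalized spectral radius --- shows that some product in an intermediate class $W(a,b)$ would strictly exceed $\JSR(A,B)$, which is absurd. Equivalently, one shows the slope of any SMP must equal the unique maximizer $\gamma^\ast$ from the first part, forcing uniqueness of the word up to cyclic permutation.

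The step I expect to be the main obstacle is the strict inequality used for uniqueness: quantifying that combining two distinct co-parallel axes strictly increases the exponential growth rate, i.e. that $\gamma \mapsto \lambda(\mu_\gamma)$ is strictly concave (or at least has a unique maximum) on the relevant range. The qualitative geometric picture --- axes interleaving, \cref{product of coparallel} --- is clear, but turning it into a clean strict inequality requires either a careful hyperbolic-geometry translation-length computation for $f_{AB}$ in terms of the translation lengths of $f_A$, $f_B$ and the distance between their axes (the co-parallel version of the $\mathrm{SL}(2,\mathbb{R})$ addition formula), or an appeal to strict convexity of pressure/Lyapunov exponent under the dominated-cocycle hypothesis established in \cref{e.lyap cont}. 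I would first try the latter, softer route: dominance gives analyticity and strict convexity of the relevant thermodynamic quantities away from cohomologically trivial directions, and distinct Sturmian slopes are never cohomologous, so strictness follows; the former explicit computation is the fallback if the soft argument has gaps at the endpoints $\gamma \in \{0,1\}$, which correspond precisely to the products $A$ and $B$ themselves and can be handled by \cref{alt} directly.
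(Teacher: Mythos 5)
Your proposal follows essentially the same route as the paper: reduce to periodic measures via Berger--Wang, use \cref{coparallel} to see that periodic maximizers are Christoffel, use compactness of the Sturmian family and continuity of $\lambda$ (\cref{e.lyap cont}) for existence, and get uniqueness from strict midpoint concavity of $\gamma\mapsto\lambda(\mu_\gamma)$, driven by $\rho(\Pi_1\Pi_2)>\rho(\Pi_1)\rho(\Pi_2)$ for co-parallel Christoffel products (\cref{alt}, \cref{all coparallel}) combined with the Jorgensen--Smith maximization $\rho(\Pi_3)\ge\rho(\Pi_1\Pi_2)$; this is exactly the paper's argument. One caution on the step you flag as the main obstacle: your ``softer'' fallback via strict convexity of thermodynamic quantities would not work, since $\lambda(\mu)$ is affine in $\mu$ and the concavity of $f(\gamma)=\lambda(\mu_\gamma)$ is not a general dominated-cocycle fact but comes precisely from the co-parallel geometry; your primary route is the correct one and needs no fallback.
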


\begin{proof}

We first show that there is always exactly one Sturmian maximizing measure. By \cref{e.BW} we know that the maximizing measures can be approximated arbitrarily well by periodically supported measures. In this context, \cref{coparallel} says that the Lyapunov exponent over periodic measures of a given period is maximized for some Sturmian measure of a rational parameter. Denote $\mathcal{M}_\sigma$ as all shift invariant probability measures, $\mathcal{M}^p_\sigma$ as all shift invariant probability measures supported on a periodic orbit and $\mathcal{M}^{\mathbb{Q}}_\sigma$ as all Sturmian measures of rational parameter. Combining the two results we get the following chain of equalities. \[\sup_{\mu\in\mathcal{M}_\sigma}\lambda(\mu)\underset{\cref{e.BW}}{=}\sup_{\mu\in \mathcal{M}^p_\sigma}\lambda(\mu)\underset{\cref{coparallel}}{=}\sup_{\mu\in\mathcal{M}^{\mathbb{Q}}_\sigma}\lambda(\mu)\]
However, by \cref{Sturmian} the set of all Sturmian measures is compact. Therefore we know there is a convergent sequence of Sturmian measures of rational parameter $\mu_n\to\mu$ that converges to a maximizing Sturmian measure. 

To show uniqueness, we prove that the map $f(t)=\lambda(\mu_t)$ is strictly concave on $[0,1]$, where $\mu_t$ denote the Sturmian measure of parameter $t$. Given any two rational numbers $t_1,t_2\in \mathbb{Q}$, they can be written with a common denominator, $t_1=\frac{p_1}{q}, t_2=\frac{p_2}{q}$. Let $w_1$ be the prefix of length $q$ of the Sturmian sequence $\omega_{t_1,0}^-$ and $w_2$ be the prefix of length $q$ of the Sturmian sequence $\omega_{t_2,0}^-$. Lastly, we let $w_3$ be the prefix of length $2q$ of the Sturmian sequence $\omega_{\frac{t_1+t_2}{2},0}^-$. Notice that $w_1,w_2,w_3$ are all powers of Christoffel words. Denote $\Pi_1=w_1(A,B), \Pi_2=w_2(A,B) , \Pi_3=w_3(A,B)$. By \cref{all coparallel,product of coparallel} $(\Pi_1,\Pi_2)$ is co-parallel. Applying \cref{alt}:
\[\rho(\Pi_1\Pi_2)>\rho(\Pi_1)\rho(\Pi_2)\]
Furthermore, by \cref{coparallel}:
\[\rho(\Pi_3)\geq \rho(\Pi_1\Pi_2) \]
Combining the two inequalities we get
\[\rho(\Pi_3)>\rho(\Pi_1)\rho(\Pi_2)\]
Taking both sides to the power $\frac{1}{2q}$ and taking the logarithm we get:
\[\log(\rho(\Pi_3)^\frac{1}{2q})>\frac{\log(\rho(\Pi_1)^\frac{1}{q})}{2}+\frac{\log(\rho(\Pi_2)^\frac{1}{q})}{2}\]
The above inequality can be re-written in terms of $f$: 
\[f\left(\frac{p_1+p_2}{2q}\right)>\frac{f\left(\frac{p_1}{q}\right)+f\left(\frac{p_2}{q}\right)}{2}.\]
Which shows that $f$ is strictly mid-point concave on the rationals. Since $f$ is continuous by \cref{e.lyap cont}, it must be strictly mid-point concave everywhere. Hence, $f$ is strictly concave. Any continuous strictly concave function on a compact set has exactly one maximum. This proves that there is exactly one Sturmian maximizing measure.

We are now ready to show that if an SMP exists, then it is Sturmian and is unique up to cyclic permutations. We know by \cref{coparallel} that all the SMPs must be Sturmian since they have to maximize spectral radius in their respective set $W(a,b)$. Therefore, they all must come from some Sturmian maximizing measure of rational parameter. However, these are unique, so there can only be one such SMP up to cyclic permutations.
\end{proof}

\begin{note}
We suspect that the maximizing measure is always unique (and therefore Sturmian) in the co-parallel region. We also suspect that the counterexamples to the finiteness conjecture form a subset of zero Hausdorff dimension. Some results in this direction were given by \cite{finiteness,Hare,JenkinsonPollicott}.
\end{note}

\section{Generic Uniqueness}\label{generic uniqueness}
In this section, we prove that in the union of the three regions $\mathcal{R}_{cross}\cup\mathcal{R}_{mix}\cup\mathcal{R}_{neg}$, the SMPs are generically unique and of the form given in \cref{intro thm}. We first prove a few lemmas. We denote $\lambda_n$ as the $n$-dimensional Lebesgue measure.
\begin{lemma}
Consider a polynomial map $\pi:\mathbb{R}^n\to \mathbb{R}^m$ such that $\lambda_m(\pi(\mathbb{R}^n))>0$. Then for any Lebesgue measurable set $E\subseteq \mathbb{R}^m$,
\[\lambda_m(E)=0 \implies \lambda_n(\pi^{-1
}(E))=0  \]
\end{lemma}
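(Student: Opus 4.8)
The plan is to reduce the statement to a standard fact about real-analytic (indeed polynomial) maps: a nonconstant polynomial map cannot pull back a null set to a set of positive measure, provided its image is not itself contained in a null set. Since $\pi$ is polynomial, it is real-analytic, and the hypothesis $\lambda_m(\pi(\mathbb{R}^n))>0$ guarantees that the Jacobian of $\pi$ does not vanish identically; otherwise, by the constant rank theorem applied on the (open, dense) set where the rank is locally maximal, the image would be locally contained in a proper submanifold and hence be $\lambda_m$-null, contradicting the hypothesis. So let $R=\max_{p\in\mathbb{R}^n}\operatorname{rank} D\pi(p)$; we have just argued $R=m$, and the set $U=\{p:\operatorname{rank}D\pi(p)=m\}$ is open and nonempty.

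First I would dispose of the complement of $U$. The set $\mathbb{R}^n\setminus U$ is the common zero locus of all $m\times m$ minors of $D\pi$, each a polynomial; since at least one such minor is not identically zero, $\mathbb{R}^n\setminus U$ is a proper algebraic subset of $\mathbb{R}^n$ and therefore has $\lambda_n$-measure zero. Hence it suffices to show $\lambda_n\big(\pi^{-1}(E)\cap U\big)=0$. On $U$ the map $\pi$ is a submersion, so locally it looks like a projection: around each $p\in U$ there is an open box $V\subseteq U$ and a diffeomorphism $\Phi:V\to V'\subseteq \mathbb{R}^m\times\mathbb{R}^{n-m}$ with $\pi=\operatorname{pr}_1\circ\Phi$ on $V$, where $\operatorname{pr}_1$ is the projection onto the first $m$ coordinates. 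Then $\pi^{-1}(E)\cap V=\Phi^{-1}\big((E\times\mathbb{R}^{n-m})\cap V'\big)$. By Fubini, $(E\times\mathbb{R}^{n-m})\cap V'$ is $\lambda_n$-null because its slices in the first $m$ coordinates are subsets of $E$, which is $\lambda_m$-null; and a $C^1$ diffeomorphism (more precisely, a locally Lipschitz map) carries null sets to null sets. So $\lambda_n(\pi^{-1}(E)\cap V)=0$.

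To finish, cover $U$ by countably many such boxes $V$ — possible since $\mathbb{R}^n$ is second countable, so $U$ is Lindelöf — and use countable subadditivity of Lebesgue measure to conclude $\lambda_n(\pi^{-1}(E)\cap U)=0$. Combining with the earlier reduction gives $\lambda_n(\pi^{-1}(E))\le \lambda_n(\pi^{-1}(E)\cap U)+\lambda_n(\mathbb{R}^n\setminus U)=0$, as desired.

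The main obstacle, and the only place that requires genuine care rather than bookkeeping, is the claim that $\lambda_m(\pi(\mathbb{R}^n))>0$ forces $\operatorname{rank} D\pi$ to attain the value $m$ somewhere. One clean way to see this: if $\operatorname{rank} D\pi\le m-1$ everywhere, then in particular on the open dense set $W$ where the rank is locally constant (equal to its local maximum $R\le m-1$), the constant rank theorem shows $\pi(W)$ is a countable union of $R$-dimensional embedded submanifolds of $\mathbb{R}^m$, hence $\lambda_m(\pi(W))=0$; and $\mathbb{R}^n\setminus W$ is contained in a proper algebraic set, over which $\pi$ (being locally Lipschitz) again produces a null image, so $\lambda_m(\pi(\mathbb{R}^n))=0$, a contradiction. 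Alternatively one can invoke the Sard-type fact that the set of critical values of a $C^\infty$ map has measure zero, together with the observation that if $D\pi$ is nowhere of full rank then every value is critical. Either route settles the point.
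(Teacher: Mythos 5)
Your proof follows essentially the same route as the paper: use Sard's theorem to see that the critical set of $\pi$ is a proper algebraic subset of $\mathbb{R}^n$ (hence $\lambda_n$-null), and then handle the submersion locus via the rank theorem, Fubini, and a countable cover; the paper simply cites Ponomarev for that second half, whose details you have written out correctly. One caveat on the "main obstacle": in your first justification of the claim that $D\pi$ must attain rank $m$ somewhere, the step asserting that $\pi$, being locally Lipschitz, sends the proper algebraic set $\mathbb{R}^n\setminus W$ to a $\lambda_m$-null set is false in general when $n>m$ --- the coordinate projection $\mathbb{R}^2\to\mathbb{R}$ maps the $\lambda_2$-null set $\mathbb{R}\times\{0\}$ onto all of $\mathbb{R}$. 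Your alternative justification via Sard (if the rank is everywhere below $m$ then every point is critical, so the entire image consists of critical values and is $\lambda_m$-null, contradicting the hypothesis) is exactly the paper's argument and is the one to keep.
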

\begin{proof}
Denote $C_\pi \subseteq \mathbb{R}^n$ as the critical points of $\pi$. Since the map $\pi$ is a polynomial, $C_\pi$ is algebraic and so either $\lambda_n(C_\pi)=0$ or $C_\pi=\mathbb{R}^n$. By Sard's theorem $\lambda_m(\pi(C_\pi))=0$. Since $\lambda_m(\pi(\mathbb{R}^n))>0$, $C_\pi$ must have zero Lebesgue measure. This means $\pi$ is almost everywhere a local submersion and now the lemma follows by the rank theorem and Fubini, for details see \cite{Ponomarev}.
\end{proof}
We apply the above lemma to the mapping: \[(A,B)\mapsto (\tr{A},\tr{B},\tr{AB},\det{A},\det{B}).\]
By \cref{real-realizable} its image has positive Lebesgue measure. Thus, if we want to prove a generic statement about pairs of real matrices $(A,B)$, we may equivalently prove it for generic $5$-tuples. We will tacitly do so from now on, without repeatedly invoking the above lemma.
\begin{lemma}\label{generic smp form}
For Lebesgue almost every pair $(A,B) \in \mathcal{R}_{cross}\cup\mathcal{R}_{mix}\cup\mathcal{R}_{neg}$, all the SMPs (up to cyclic permutation) are of the form given by \cref{intro thm}.
\end{lemma}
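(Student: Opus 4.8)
The plan is to handle the three regions via the ``exceptional'' hypotheses already isolated in the earlier theorems, and show that violating those hypotheses is a measure-zero condition on $5$-tuples. Recall that Theorems \ref{ mixed case} and \ref{both negative determinants} describe all SMPs (as $A,B,A^nB$, resp.\ $A,B,AB$) except when some power $A^k$ or $B^k$ equals $\JSR(A,B)^k I$. So the first step is to observe that, away from $\mathcal{R}_{cross}$ (where \cref{crossing smps} already gives the classification with no exceptions), the failure set is contained in the set of pairs for which $A^k = \JSR(A,B)^k I$ or $B^k = \JSR(A,B)^k I$ for some $k \ge 2$. I would first reduce to: for almost every $5$-tuple in $\mathcal{R}_{mix}\cup\mathcal{R}_{neg}$, neither $A$ nor $B$ has a power that is a scalar multiple of the identity.

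The second step is the measure-theoretic heart. A matrix $A \in M_2(\mathbb{R})$ has $A^k = cI$ for some $c$ and some $k\ge 2$ precisely when $A$ is either itself a scalar multiple of the identity, or is conjugate to a scalar multiple of a rotation by a rational angle $2\pi p/q$; in $5$-tuple language, writing $x=\tr A$, $u=\det A$, this forces $x^2 - 4u \le 0$ together with a countable family of algebraic relations between $x$ and $u$ (one for each rational rotation angle), namely $x^2 = 4u\cos^2(\pi p/q)$. Each such relation cuts out a set of $\lambda_5$-measure zero in the space of $5$-tuples (it is a proper algebraic subset, since e.g.\ it fixes a one-dimensional relation among the two coordinates $x,u$), and a countable union of measure-zero sets is measure zero; the same applies with $y,v$ in place of $x,u$. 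Intersecting with $\mathcal{R}_{mix}$ or $\mathcal{R}_{neg}$ only shrinks the set. Hence the exceptional pairs form a $\lambda_5$-null set, and by the polynomial-preimage lemma applied to $\phi$ (as the paper already instructs us to do tacitly), the corresponding pairs $(A,B)$ form a $\lambda_8$-null set.

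The third step simply assembles the pieces. On $\mathcal{R}_{cross}$, \cref{crossing smps} gives the conclusion for every pair, with no exceptions. On $\mathcal{R}_{mix}$: after using the rescaling and swap invariance to put ourselves in the situation $\det(A)>0>\det(B)$ (the non-invertible sub-case being measure zero already, since $\det A = 0$ or $\det B=0$ is an algebraic condition), \cref{ mixed case} applies off the exceptional null set and yields SMPs of the form $A, B, A^nB$. On $\mathcal{R}_{neg}$, \cref{both negative determinants} applies off the exceptional null set and yields SMPs among $A,B,AB$. Taking the union of the three null sets (one genuine exceptional set plus the coordinate hyperplanes $\det A=0$, $\det B=0$) gives a single $\lambda_8$-null set outside of which all SMPs are of the stated form.

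The main obstacle I anticipate is the bookkeeping in the second step: one must argue cleanly that ``$A$ has finite order modulo scalars'' is captured by countably many proper algebraic conditions on $(x,u)$ — in particular checking that when $x^2-4u<0$ strictly, the rotation angle of $A$ (viewed projectively) is a well-defined continuous, locally non-constant function of $(x,u)$, so that demanding it be a specific rational multiple of $\pi$ really does cut down a positive-codimension set rather than an open set. The degenerate locus $x^2 = 4u$ (parabolic/diagonalizable boundary) is itself measure zero and can be absorbed. Once that is in hand, everything else is a routine application of the earlier theorems together with the already-established polynomial-preimage principle.
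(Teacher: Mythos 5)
Your overall strategy coincides with the paper's: on $\mathcal{R}_{cross}$ there is nothing to prove by \cref{crossing smps}, and on $\mathcal{R}_{mix}\cup\mathcal{R}_{neg}$ one reduces, via \cref{ mixed case} and \cref{both negative determinants}, to showing that the pairs with $A^k=\JSR(A,B)^kI$ or $B^k=\JSR(A,B)^kI$ form a null set, which is done by exhibiting countably many proper algebraic conditions on the $5$-tuple and pulling back through the polynomial map $\phi$. For $\mathcal{R}_{cross}$ and $\mathcal{R}_{mix}$ your argument is essentially identical to the paper's.

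There is, however, a genuine flaw in Step 2, and it hits exactly the case needed for $\mathcal{R}_{neg}$. Your claim that $A^k=cI$ for some $k\ge 2$ forces $A$ to be a scalar matrix or conjugate to a real scalar multiple of a rational rotation is false: a multiple of a reflection, i.e.\ $\tr A=0$ with $\det A<0$, satisfies $A^2=(-\det A)I$ but has $x^2-4u=-4u>0$, so it violates the constraint $x^2-4u\le 0$ you impose, and it is not a real scalar multiple of a rotation (those have nonnegative determinant). In $\mathcal{R}_{neg}$ both determinants are negative, so the elliptic rational-rotation locus you describe is empty there, and the \emph{entire} exceptional set excluded by \cref{both negative determinants} consists of these reflection-type matrices; as written, your Step 3 invokes an ``exceptional null set'' for $\mathcal{R}_{neg}$ that your Step 2 never addresses. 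The repair is immediate and is what the paper does: in $\mathcal{R}_{neg}$ the condition $A^2=\JSR(A,B)^2I$ forces $\tr A=0$, so the associated $5$-tuples lie in the hyperplane $x=0$ (resp.\ $y=0$ for $B$), which is Lebesgue null. With that correction, and noting that in $\mathcal{R}_{mix}$ (where $\det A>0$) the reflection case cannot occur so your rotation analysis suffices there, the proof goes through. Your closing worry about the rotation angle being locally non-constant is unnecessary: for each fixed rational angle the condition is a single proper algebraic relation between $x$ and $u$, hence null, which is all one needs.
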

\begin{proof}
If $(A,B) \in \mathcal{R}_{cross}$, all the SMPs are either $A$ or $B$ by \cref{crossing smps} so there is nothing left to prove.

If $(A,B) \in \mathcal{R}_{mix}$, we may assume that $A,B$ are invertible as non-invertible matrices form a null set. Now by \cref{ mixed case} it is sufficient to show that pairs $(A,B)$ for which there exists $k\geq 1$ such that
\[A^k=\JSR(A,B)^kI,\]
form a measure zero set. Assume $(A,B)$ are such that the above holds, then $A$ must be conjugate to a multiple of a rotation by $2\pi\theta$ where $\theta\in \mathbb{Q}$. Such matrices satisfy the equation $\det(A)=\frac{(\tr{A})^2}{4\cos^2(2\pi\theta)}$. Thus, for fixed $\theta$, all associated $5$-tuples live on the on the parametric family \[M_\theta=(x,y,z,\frac{x^2}{4\cos^2(2\pi\theta)},v)\]
which has Lebesgue zero measure in $\mathbb{R}^5$. Since the set of rational numbers is countable the set $\cup_{\theta \in \mathbb{Q}}M_\theta$ also has measure zero. Thus, such $(A,B)$ must form a measure zero set.

If $(A,B) \in \mathcal{R}_{neg}$, then by \cref{both negative determinants} it is sufficient to show that pairs $(A,B)$ such that
\[A^2=\JSR(A,B)^2I\quad\text{or 
}\quad B^2=\JSR(A,B)^2I  \]
form a measure zero set. Without loss of generality assume $A$ satisfies the above, then $A$ must be conjugate to a multiple of a reflection. Such matrices satisfy the equation $\tr{A}=0$. Hence, all associated $5$-tuples live in 
\[(0,y,z,u,v)\]
which is measure zero. Similarly if $B$ is conjugate to reflection we get a zero measure set. Hence, such $(A,B)$ must form a measure zero set.

\end{proof}
\begin{lemma}\label{generic red}
The set of reducible pairs of matrices $(A,B)$ is a proper algebraic set. In particular, it is nowhere dense and has Lebesgue measure zero.
\end{lemma}
\begin{proof}
By \cref{friedland}, the $5$-tuples of reducible pairs of matrices are exactly the $5$-tuples that satisfy the equation
\[4 u v-u y^2-v x^2+x y z-z^2= 0.\]
Therefore  $5$-tuples of reducible pairs of matrices are the pre-image of $\{0\}$ of a non-zero polynomial.
\end{proof}
\begin{lemma}\label{l.ae}
For Lebesgue almost every pair $(A,B)$ of real $2 \times 2$ matrices, if $\Pi$ is an SMP that has non-real eigenvalues, then $\Pi=A$ or $\Pi=B$ and the SMP is unique.
\end{lemma}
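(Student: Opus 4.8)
The plan is to argue throughout at the level of the $5$-tuple $(x,y,z,u,v)$ rather than the pair $(A,B)$ itself — legitimate after deleting the null set of reducible pairs (\cref{generic red}) and invoking the push-forward argument recalled before \cref{generic smp form} — writing $u=\det A$ and $v=\det B$. For each primitive word I will produce a proper algebraic condition on $(x,y,z,u,v)$ outside of which the conclusion holds, and then take the countable union. Two things must be ruled out generically: an SMP with non-real eigenvalues that is not $A$ or $B$, and the coexistence of such an SMP (necessarily $A$ or $B$) with a second, distinct SMP.

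The crux is a short arithmetic observation. Suppose $\Pi=w(A,B)$ is an SMP with non-real eigenvalues and $w$ is primitive of length $k\ge 2$; then $w$ uses both letters, say $a\ge 1$ copies of $A$ and $b\ge 1$ of $B$. Non-real eigenvalues force $\det\Pi>0$ and $\rho(\Pi)^2=\det\Pi$, while the SMP property gives $\rho(\Pi)=\JSR(A,B)^{k}$, so by multiplicativity of the determinant $\JSR(A,B)^{2k}=u^{a}v^{b}>0$. On the other hand $\JSR(A,B)\ge\max\{\rho(A),\rho(B)\}$ by \cref{three member}, and $\rho(M)^2\ge|\det M|$ for every real $2\times2$ matrix $M$, so $\JSR(A,B)^2\ge|u|$ and $\JSR(A,B)^2\ge|v|$. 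Raising these to the powers $a$ and $b$ and multiplying gives $\JSR(A,B)^{2k}\ge|u|^{a}|v|^{b}=u^{a}v^{b}$; since the two ends coincide and $a,b\ge1$, we must have $\JSR(A,B)^2=|u|$ and $\JSR(A,B)^2=|v|$, hence $u^2=v^2$. As $\{u^2=v^2\}$ is a proper algebraic set, this already shows that for a.e.\ pair every SMP with non-real eigenvalues equals $A$ or $B$, and it also disposes of the sub-case of uniqueness in which the competing SMP has non-real eigenvalues.

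For the uniqueness statement it remains, using the $A\leftrightarrow B$ symmetry, to treat the event that $A$ is an SMP with non-real eigenvalues and some second SMP $\Pi'=w'(A,B)$, $w'\ne 0$, has \emph{real} eigenvalues. Here $u>0$, $\JSR(A,B)=\rho(A)=\sqrt u$, and $\rho(\Pi')^2=\JSR(A,B)^{2k'}=u^{k'}$ with $k'=|w'|$. If $w'=1$, i.e.\ $\Pi'=B$, this says $\rho(B)^2=u$, which is the graph of $u=\rho(B)^2$ over the $(y,v)$-variables, hence a null set. If $|w'|\ge2$, let $a',b'\ge1$ count the occurrences of $A$ and $B$ in $w'$; then $u^{k'/2}$ or $-u^{k'/2}$ is an eigenvalue of $\Pi'$, hence a root of $t^2-F_{w'}(x,y,z,u,v)\,t+u^{a'}v^{b'}$, where $F_{w'}$ is the Fricke polynomial of \cref{e. fricke}. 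Eliminating the sign yields the polynomial relation
\[
\bigl(u^{k'}+u^{a'}v^{b'}\bigr)^2 = F_{w'}(x,y,z,u,v)^2\,u^{k'} .
\]
Outside the zero set of this polynomial the lemma holds for the word $w'$; the countable union over all primitive $w'$, over the two assignments of which matrix is $A$, together with $\{u^2=v^2\}$ and the reducible pairs, produces the desired full-measure set.

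The step I expect to require the most care is checking, uniformly in $w'$, that $(u^{k'}+u^{a'}v^{b'})^2-F_{w'}^2u^{k'}$ is not identically zero — equivalently, that one genuinely obtains a proper subvariety for every word. A degree count settles this: under $A\mapsto tA$ the trace $\tr(w'(A,B))$ scales by $t^{a'}$, so $F_{w'}$ is homogeneous of degree $a'$ when $x,z$ are given weight $1$ and $u$ weight $2$; therefore $\deg_u(F_{w'}^2u^{k'})\le a'+k'<2k'$ (as $b'\ge1$), whereas $(u^{k'}+u^{a'}v^{b'})^2$ contains the monomial $u^{2k'}$, which has $v$-degree $0$ unlike its other two terms. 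Hence the coefficient of $u^{2k'}$ survives, the polynomial is nonzero, and every remaining verification is a one-line computation.
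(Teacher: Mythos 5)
Your proof is correct, but it follows a genuinely different route from the paper's. The paper's argument is geometric: for an irreducible pair there is an extremal convex body $C$ with $\mathrm{conv}(A(C)\cup B(C))=\varrho C$, where $\varrho=\JSR(A,B)$; an SMP $\Pi$ with non-real eigenvalues has $\det\Pi=\varrho^{2k}$, so the inclusion $\Pi(C)\subseteq\varrho^k C$ is an equality of areas, forcing each letter occurring in $\Pi$ to map $C$ onto $\varrho C$. The generic condition $\det A\neq\det B$ then prevents both letters from occurring, and the same convex-body consideration delivers uniqueness with no further computation. You replace the extremal body by the elementary inequalities $\rho(M)^2\geq|\det M|$ and $\JSR(A,B)\geq\max\{\rho(A),\rho(B)\}$: squeezing $\JSR^{2k}=u^av^b$ between $|u|^a|v|^b$ and itself pins down $\JSR^2=|u|=|v|$, so your generic condition is $|\det A|\neq|\det B|$ instead — equally a proper algebraic set, and this part of your argument is clean and arguably more elementary. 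The price is that uniqueness no longer comes for free: you need the additional sign-elimination/Fricke-polynomial step, which is very much in the spirit of the paper's own generic-uniqueness theorem at the end of that section, and your weighted-homogeneity degree count in $u$ (the coefficient of $u^{2k'}$ cannot be cancelled since $\deg_u(F_{w'}^2u^{k'})\leq a'+k'<2k'$) does correctly establish that each eliminant is a nonzero polynomial. Both proofs are sound; yours avoids invoking Protasov's extremal norms entirely, at the cost of a longer case analysis for uniqueness.
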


\begin{proof}
By $\cref{generic red}$ we may assume $(A,B)$ is irreducible. 
For those pairs, $\varrho \coloneqq \mathrm{JSR}(A,B)$ is nonzero and there exists a centrally symmetric convex body $C \subseteq \mathbb{R}^2$ such that the convex hull of $A(C) \cup B(C)$ is $\varrho C$ (See \cite{Protasov96}).
Suppose $\Pi$ is an SMP with non-real eigenvalues.
Then, $\det \Pi = \varrho^{2k}$, where $k\coloneqq |\Pi|$, so the inclusion $\Pi(C) \subseteq \varrho^k C$ is actually an equality. 
It follows that either $A(C)$ or $B(C)$ equals $\varrho C$. 
Almost surely we have $\det A \neq \det B$, and in this case $A(C) \neq B(C)$.
Since $\Pi$ is primitive, it must be either $A$ or $B$, and $\Pi$ is the unique SMP.
\end{proof}

Given a word $w\in \{0,1\}^*$, define its signature $(m,k,l)$ as follows: we first cyclically permute $w$ so that it is a Lyndon word and the three numbers are: $m$ is the number of $0$'s, $k$ is the number of $1$'s, and $l$ is the number of $01$'s that can be found within that new word. We remind the reader that given a word $w$, $F_{w}(x,y,z,u,v)$ denotes the Fricke polynomial described in \cref{e. fricke}.

\begin{lemma}\label{independent fricke}
Let $w$ and $w'$ be words whose signatures $(a,b,c)$ and $(a',b',c')$ are linearly independent, then the polynomials $F_w(x,y,z,0,0), F_{w'}(x,y,z,0,0)$ are algebraically independent. That is, there is no non-zero polynomial $P$ such that:
\[P(F_w(x,y,z,0,0),F_{w'}(x,y,z,0,0))=0\]
\end{lemma}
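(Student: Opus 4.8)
The plan is to analyze the structure of the Fricke polynomials $F_w(x,y,z,0,0)$ when the determinant variables $u,v$ are set to zero, and to extract from each such polynomial a "leading term" whose exponent vector records exactly the signature of $w$. Recall that the Fricke polynomial computes $\tr w(A,B)$; specializing $u=v=0$ corresponds to taking $A,B$ with zero determinant, so by Cayley--Hamilton $A^2=(\tr A)A=xA$ and $B^2=yB$. Using these relations one can reduce any word to a $\mathbb{Z}[x,y]$-linear combination of traces of "alternating" products $\tr(A B A B\cdots)$, and iterating $\tr(XZYZ)=\tr(XZ)\tr(YZ)$ type identities (\cref{multiplicative trace}, valid since $\det A=\det B=0$) one sees that $\tr(w(A,B))$ becomes, after cyclically reducing $w$ to its Lyndon representative with $m$ copies of $A$, $k$ copies of $B$, and $l$ occurrences of the factor $AB$, a polynomial in $x,y,z$ whose distinguished monomial is $x^{m-l}y^{k-l}z^{l}$ (up to an integer constant), with all other monomials having strictly smaller total degree in $z$ or lying on a lower "level." The first step is therefore to prove this normal-form statement: $F_w(x,y,z,0,0)$ has a monomial $c\, x^{m-l}y^{k-l}z^l$ with $c\neq 0$, and this monomial is extremal in a suitable monomial order determined by the signature.

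Granting that, the second step is the algebraic independence argument itself. Suppose $P\bigl(F_w(x,y,z,0,0),F_{w'}(x,y,z,0,0)\bigr)\equiv 0$ for some nonzero polynomial $P$ of minimal total degree. Write $p=F_w(x,y,z,0,0)$, $q=F_{w'}(x,y,z,0,0)$, with signatures $s=(a,b,c)$ and $s'=(a',b',c')$ linearly independent in $\mathbb{R}^3$. The idea is to choose a linear functional $\ell:\mathbb{Z}^3\to\mathbb{Z}$ (equivalently a weight/grading on the variables $x,y,z$) that separates the signature-exponents of $p$ and $q$: since $s$ and $s'$ are linearly independent there is a weight vector $\phi=(\phi_x,\phi_y,\phi_z)$ with $\phi\cdot(a-c,b-c,c)\neq \phi\cdot(a'-c',b'-c',c')$, and generically the distinguished monomials of $p$ and $q$ are the unique ones of maximal $\phi$-weight in their respective polynomials. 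Passing to the $\phi$-leading forms, a relation $P(p,q)=0$ forces a relation among the leading forms, which (because $p$ and $q$ are, essentially, monomials of incommensurable weights) forces $P$ to have a nontrivial factor, contradicting minimality — or more directly, one concludes the top-weight parts of $p$ and $q$ are monomials $x^{\alpha}y^{\beta}z^{\gamma}$ and $x^{\alpha'}y^{\beta'}z^{\gamma'}$ with $(\alpha,\beta,\gamma)$, $(\alpha',\beta',\gamma')$ linearly independent, hence the monomials $p^iq^j$ appearing in $P(p,q)$ have pairwise distinct leading monomials and cannot cancel. Thus $P=0$, a contradiction.

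Concretely I would organize it as: (i) establish the normal form for $F_w(x,y,z,0,0)$ via Cayley--Hamilton and \cref{multiplicative trace}, tracking which monomial $x^{m-l}y^{k-l}z^l$ survives with nonzero coefficient; (ii) observe that the exponent vector $(m-l,k-l,l)$ is an invertible linear image of the signature $(m,k,l)$, so linear independence of signatures is equivalent to linear independence of these exponent vectors; (iii) run the weighted-leading-term cancellation argument above to show no nonzero $P$ can annihilate the pair. A clean way to phrase (iii): the map $\mathbb{Z}^2\to\mathbb{Z}^3$, $(i,j)\mapsto i(m-l,k-l,l)+j(m'-l',k'-l',l')$ is injective when the two vectors are independent, so the leading monomials of $p^i q^j$ (namely $x^{\bullet}y^{\bullet}z^{\bullet}$ with exponent $i(m-l,k-l,l)+j(m'-l',k'-l',l')$) are all distinct; in any hypothetical relation $\sum_{i,j}\lambda_{ij}p^iq^j=0$ the term with lexicographically largest such exponent cannot be cancelled, forcing all $\lambda_{ij}=0$.

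The main obstacle I expect is step (i): proving rigorously that the coefficient $c$ of $x^{m-l}y^{k-l}z^l$ in $F_w(x,y,z,0,0)$ is genuinely nonzero (and not accidentally killed by the Cayley--Hamilton reductions), and that this monomial is the unique one of maximal weight under the relevant grading. This requires a careful induction on word length — perhaps most cleanly organized along the Christoffel/standard-factorization structure, or by directly induating on the number of "descents" in the Lyndon word — and a choice of grading (for instance $z$ with weight $2$, $x,y$ with weight $1$, so that the distinguished monomial has top $z$-degree among terms of its total degree, matching the fact that $\tr(AB)$ is "degree-$2$ data" while $\tr A,\tr B$ are "degree-$1$"). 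Once the nonvanishing and extremality of the signature monomial are in hand, the rest of the argument is elementary linear algebra and monomial bookkeeping.
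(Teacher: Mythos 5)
Your proposal is correct and follows essentially the same route as the paper: specialize to $u=v=0$, use Cayley--Hamilton together with the multiplicativity of traces for singular matrices to reduce $F_w(x,y,z,0,0)$ to the monomial $x^{a-c}y^{b-c}z^{c}$, and then observe that monomials with linearly independent exponent vectors are algebraically independent. The ``main obstacle'' you anticipate in step (i) does not actually arise: since $\det A=\det B=\det AB=0$, the relations $A^n=x^{n-1}A$, $B^n=y^{n-1}B$, $(AB)^n=z^{n-1}AB$ collapse the cyclically reduced word to an exact scalar multiple $x^{a-c}y^{b-c}z^{c-1}\,AB$, so $F_w(x,y,z,0,0)$ \emph{is} the single monomial $x^{a-c}y^{b-c}z^{c}$ with no lower-order terms, and your injectivity argument for $(i,j)\mapsto i\,(a-c,b-c,c)+j\,(a'-c',b'-c',c')$ then finishes the proof immediately.
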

\begin{proof}
Consider a pair of matrices $A,B$ associated to the $5$-tuple $(x,y,z,0,0)$ where $x,y,z$ are free. Now we use observations similar ones in \cref{non-inv}.  Cyclically permuting $w$ if necessary,
$w(A,B)$ can be written as
\[w(A,B)=A^{a_1}B^{b_1}\cdots A^{a_c}B^{b_c} \]
with $a_i,b_i\geq1$.
Since $A,B,AB$ are all non-invertible, by Cayley-Hamilton we have
\[A^n=(\tr{A})^{n-1}A,\quad B^n=(\tr{B})^{n-1}B,\quad (AB)^n=(\tr{AB})^{n-1}AB.\] Thus, we may reduce all powers of $A$ and $B$ by pulling out traces:
\[w(A,B)=(\tr{A})^{a-c}(\tr{B})^{b-c}(AB)^c\]
We now take the trace and get the following expression:
\[\tr{w(A,B)}=(\tr{A})^{a-c}(\tr{B})^{b-c}(\tr{AB})^c=x^{a-c}y^{b-c}z^c\]
By exactly the same reasoning:
\[\tr{w'(A,B)}=(\tr{A})^{a'-c'}(\tr{B})^{b'-c'}(\tr{AB})^{c'}=x^{a'-c'}y^{b'-c'}z^{c'}\]
Now if the polynomials $F_{w}(x,y,z,0,0),F_{w'}(x,y,z,0,0)$ were algebraically dependent then:
\[F_{w}(x,y,z,0,0)=x^{a-c}y^{b-c}z^c
\text{ and  }
F_{w'}(x,y,z,0,0)=x^{a'-c'}y^{b'-c'}z^{c'}\]
would also have to be dependent. However, since $(a,b,c)$ and $(a',b',c')$ are linearly independent, we arrive at a contradiction.
\end{proof}

We are now ready to prove generic uniqueness.
\begin{theorem}
Almost every pair $(A,B) \in \mathcal{R}_{cross}\cup\mathcal{R}_{mix}\cup\mathcal{R}_{neg}$ has a unique SMP that is of the form described in \cref{intro thm}.
\end{theorem}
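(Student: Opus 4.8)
The plan is to combine \cref{generic smp form,l.ae} with a dimension-counting argument based on Fricke polynomials to kill the remaining possibility of two distinct SMPs. By \cref{generic smp form}, outside a null set every SMP is of the form $A$, $B$, $AB$, $A^nB$, or $AB^n$ (depending on the region), so it suffices to show that, generically, no two words from this list can simultaneously be SMPs. By \cref{l.ae} we may also assume that any SMP $\Pi$ has real eigenvalues, so $\rho(\Pi) = \frac{1}{2}\left(|\tr\Pi| + \sqrt{\tr(\Pi)^2 - 4\det\Pi}\right)$, and since all words in a fixed region share the same determinant data, the SMP condition $\rho(\Pi)^{1/|\Pi|} = \JSR$ becomes a polynomial condition on the $5$-tuple via the Fricke polynomials of \cref{e. fricke}.

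The key steps, in order, are: (1) Fix a region and two candidate words $w, w'$ from the allowed list (e.g.\ $A^nB$ and $A^mB$ with $n \neq m$, or $A^nB$ and $B$, etc.); the set of pairs for which both $w(A,B)$ and $w'(A,B)$ are SMPs is contained in the set where the two expressions $\rho(w(A,B))^{1/|w|}$ and $\rho(w'(A,B))^{1/|w'|}$ coincide. (2) Clearing radicals and denominators, this forces a polynomial relation $P(F_w(x,y,z,u,v), F_{w'}(x,y,z,u,v)) = 0$; I would argue this relation is non-trivial, so its solution set is a proper algebraic subvariety, hence null in $\mathbb{R}^5$. (3) To see the relation is non-trivial, restrict to the slice $u = v = 0$ (or an appropriate slice where one determinant vanishes) and invoke \cref{independent fricke}: as long as the signatures of $w$ and $w'$ are linearly independent, the Fricke polynomials are algebraically independent even on this slice, so no polynomial relation can hold identically; one checks that any two distinct words among $\{A, B, AB, A^nB, A^mB\}$ (with the relevant indices distinct) indeed have linearly independent signatures, the signatures being $(1,0,0)$, $(0,1,0)$, $(1,1,1)$, $(n,1,1)$, $(m,1,1)$. (4) Take the union over the countably many pairs of words $(w,w')$ (finitely many "short" words plus the countable families $A^nB$, $AB^n$); a countable union of null sets is null. (5) Finally, intersect with the full-measure set from \cref{generic smp form,l.ae,generic red} to conclude that for a.e.\ $(A,B)$ in the union of the three regions, the SMP is unique up to cyclic permutation and of the stated form.

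The main obstacle I anticipate is Step (3) in the cases where the two candidate words are genuinely different families that could share a signature after the cyclic-Lyndon normalization — but by the explicit signature computation above this does not happen for the word lists produced by \cref{intro thm}, so the difficulty is really bookkeeping rather than a conceptual gap. A secondary subtlety is handling boundary degeneracies: e.g.\ when $A^nB = A^mB$ forces $n = m$ only after accounting for cyclic permutations, or when one "word" collapses (such as $A^0 B = B$); these are absorbed by noting the signature triple already encodes the Lyndon-normalized word and that the allowed-word list in each region is finite-plus-countable with pairwise independent signatures. Once the algebraic independence is in hand, the measure-theoretic conclusion is immediate from the lemma at the start of \cref{generic uniqueness}.
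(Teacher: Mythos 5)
Your proposal follows essentially the same route as the paper: reduce via \cref{generic smp form} and \cref{l.ae} to countably many pairs of candidate words whose products have real eigenvalues, convert simultaneous maximality into a polynomial relation among Fricke polynomials, rule that relation out on the slice $u=v=0$ using \cref{independent fricke}, and finish with the measure-theoretic lemma from the start of \cref{generic uniqueness}. The only real difference is bookkeeping: the paper obtains the relation by symmetrizing the product $\prod_{i,j}\big(\lambda_i^{2k'}-(\lambda_j')^{2k}\big)$ via Newton--Girard, so that $P$ carries $\det\Pi=u^av^b$ and $\det\Pi'=u^{a'}v^{b'}$ as two extra arguments (your radical-clearing would likewise have to keep these determinant monomials as separate variables rather than writing $P$ in $F_w,F_{w'}$ alone), and the non-triviality of the restricted relation $P(\,\cdot\,,\cdot\,,0,0)=0$ is exactly the point you flag as needing an argument --- the paper delegates it to \cref{independent fricke} in the same way.
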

\begin{proof}
Assume for a contradiction that there exists a subset $\mathcal{P}$ of $\mathcal{R}_{cross}\cup\mathcal{R}_{mix}\cup\mathcal{R}_{neg}$ with positive Lebesgue measure such that every pair in $\mathcal{P}$ admits two SMPs. We know for almost every pair $(A,B)$ all SMPs are of the form $A^nB$ or $AB^n$ by \cref{generic smp form}. Since the set of such products is countable, we can find a pair of distinct primitive words $w$, $w'$ of the form $0^n1$ or $01^n$ and a subset $\mathcal{Q} \subseteq \mathcal{P}$ of positive measure such that for every $(A,B) \in \mathcal{Q}$, $w(A,B)$ and $w'(A,B)$ are both SMPs.
By \cref{l.ae} we may reduce the set $\mathcal{Q}$ if necessary, and assume $w(A,B),w'(A,B)$ have real eigenvalues for all $(A,B) \in \mathcal{Q}$.

Fix $(A,B) \in \mathcal{Q}$ and consider the two distinct SMPs $\Pi=w(A,B)$ $\Pi'=w'(A,B)$ with lengths $k \coloneqq |\Pi|$, $k' \coloneqq |\Pi'|$.
Let $\lambda_1$, $\lambda_2$ (resp.\ $\lambda_1'$, $\lambda_2'$) be the eigenvalues of $w$ (resp.\ $w'$). By assumption, all eigenvalues $\lambda_1,  \lambda_2, \lambda_1', \lambda_2'$ are real. 
Furthermore,
\begin{equation}
\mathrm{JSR}(A,B) 
= \max\left\{|\lambda_1|^{\frac{1}{k}},|\lambda_2|^{\frac{1}{k}}\right\}
= \max\left\{|\lambda_1'|^{\frac{1}{k'}},|\lambda_2'|^{\frac{1}{k'}}\right\} \, .
\end{equation}
Since eigenvalues are real,
\begin{equation}
\big( \lambda_1^{2k'} - (\lambda'_1)^{2k} \big) 
\big( \lambda_1^{2k'} - (\lambda'_2)^{2k} \big) 
\big( \lambda_2^{2k'} - (\lambda'_1)^{2k} \big) 
\big( \lambda_2^{2k'} - (\lambda'_2)^{2k} \big) = 0 \, .
\end{equation}
Expanding and using Newton--Girard identities, this equation becomes:
\begin{equation}
P(\tr \Pi, \tr \Pi' , \det \Pi, \det \Pi') = 0 \, ,
\end{equation}
where $P$ is some polynomial in four variables, with real coefficients.
Letting as usual $x= \tr A$, $y = \tr B$, $z = \tr AB$, $u =\det A$, $v=\det B$, the relation above can be rewritten
\begin{equation}\label{e.relation}
P\big( F_w(x,y,z,u,v), F_{w'}(x,y,z,u,v), u^av^b,u^{a'}v^{b'}) = 0 
\end{equation}
where $F_w,F_{w'}$ are the Fricke polynomials and $a,b,a',b'$ are appropriate non-negative integers.

Equation \eqref{e.relation} holds for all $(A,B)$ in a positive measure subset of pairs $(A,B)$.
It follows that the same equation, seen in terms of variables $x,y,z,u,v$, holds on a positive measure subset of $\mathbb{R}^5$. Since proper algebraic sets have zero measure, we conclude that the equation holds on all of $\mathbb{R}^5$. Setting $u=v=0$ we obtain:
\[P\big( F_w(x,y,z,0,0), F_{w'}(x,y,z,0,0),0,0) = 0. \] However, since $w,w'$ are two distinct words of forms $0^n1$ or $10^n$, they necessarily have linearly independent signatures. By \cref{independent fricke} such $P$ cannot exits and we arrive at a contradiction.
\end{proof}
With this theorem, we have completed the proof of \cref{intro thm}.
\section{The unexplored regions}
There are essentially two regions that remain unexplored $\mathcal{S}_{anti},\mathcal{S}_{complex}$:
\begin{itemize}
\item $\mathcal{S}_{anti}$ is the region where $(A,B)$ is anti-parallel.
\item $\mathcal{S}_{complex}$ is the region where $A$ or $B$ have complex eigenvalues.
\end{itemize}

Some notes about these regions:
\begin{enumerate}
    \item Generic uniqueness of SMPs (modulo only cyclic permutations) does not hold in $\mathcal{S}_{complex}$, see \cite{BL}. There are two cases, either both or only one matrix has complex eigenvalues. The examples in \cite{BL} are all in the latter category; however there exist examples where both matrices have complex eigenvalues. One such example is given by the $5$-tuple:
    \[(-3.76601, -0.49459, -8.13153, 3.52510, 8.71249).\]
    \item By our classification, generically, non-Sturmian SMPs can only occur in these unexplored regions. It is not too hard to find explicit examples of non-Sturmian SMPs in both of them.
    \item Numerically, non-Sturmians are quite rare in $\mathcal{S}_{anti}\cap SL(2,\mathbb{R})^2$. It seems that they are concentrated near the curve $(x,x,2-x^2,1,1)$. Note that any pair $(A,B$) near this curve has the following properties $\rho(A)\approx\rho(B)$ and $\rho(AB)\approx \rho(A)\rho(B)$
\end{enumerate}
\section{Non-negative matrices}
In the introduction, we claimed that our classification encompasses all non-negative matrices. We are now ready to prove this result. We begin with a lemma characterizing non-negative matrices with positive determinants.
\begin{lemma}\label{non-negative lemma}
Let $(A,B) \in GL^+(2,\mathbb{R})^2$ be an irreducible pair of real diagonalizable matrices, then $(A,B)$ is simultaneously conjugate to non-negative matrices if and only if $(A,B)$ is either crossing or co-parallel and $\tr(A),\tr(B)\geq 0$
\end{lemma}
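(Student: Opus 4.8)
\emph{Proof proposal.} The plan is to recast the statement in terms of invariant cones. A real $2\times 2$ matrix has non-negative entries exactly when it maps the first-quadrant cone $Q=\{(s,t):s,t\ge 0\}$ into itself (its columns are the images of $e_1,e_2$). Since $Q$ is a proper solid convex cone and every such cone equals $P(Q)$ for some $P\in GL(2,\mathbb{R})$, the pair $(A,B)$ is simultaneously conjugate to non-negative matrices if and only if there is a proper solid convex cone $K\subseteq\mathbb{R}^2$ with $A(K)\subseteq K$ and $B(K)\subseteq K$. So the lemma reduces to: such a $K$ exists $\iff$ $(A,B)$ is crossing or co-parallel and $\tr A,\tr B\ge 0$. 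I will use repeatedly that, under the lemma's hypotheses together with $\tr A,\tr B\ge 0$, each of $A,B$ has two distinct positive eigenvalues — positive because the determinant is positive and the trace non-negative, distinct because a diagonalizable matrix with a repeated eigenvalue is scalar, which would make the pair reducible — so $f_A$ and $f_B$ each have a genuine attracting fixed point $a^+,b^+$ and repelling fixed point $a^-,b^-$ on $\hat{\mathbb{R}}$, and these four points are pairwise distinct (a coincidence among them would be a shared eigenline, again forcing reducibility). I will also use the elementary fact that a proper closed arc $J\subseteq\hat{\mathbb{R}}$ is forward invariant under a hyperbolic M\"obius transformation precisely when it contains the attracting fixed point and does not contain the repelling fixed point in its interior.

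For necessity, suppose $(A,B)$ is conjugate to non-negative matrices. Then $\tr A,\tr B\ge 0$ because non-negative matrices have non-negative trace, and there is a common invariant proper solid cone $K$ as above. Projectivising $K$ to an arc $J$, the invariance criterion gives $a^+,b^+\in J$ and $a^-,b^-\notin\operatorname{int}(J)$. I then exclude the anti-parallel configuration: there the cyclic order of the four points on $\hat{\mathbb{R}}$ is $a^+,a^-,b^+,b^-$, so any arc containing both $a^+$ and $b^+$ contains one of the two sub-arcs joining them, hence has $a^-$ or $b^-$ in its interior — a contradiction. Since $(A,B)$, being an irreducible pair of real-diagonalizable non-scalar matrices, is crossing, co-parallel or anti-parallel, it must be one of the first two.

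For sufficiency, assume $(A,B)$ is crossing or co-parallel with $\tr A,\tr B\ge 0$. In both cases the attracting fixed points $a^+$ and $b^+$ are adjacent among the four points on $\hat{\mathbb{R}}$: for a crossing pair the points alternate $a,b,a,b$, so every $a$-point is adjacent to every $b$-point; for a co-parallel pair adjacency of the two attractors is what the orientation clause in the definition expresses (anti-parallel being the case where the two repellers separate them). Let $J$ be the short closed arc from $a^+$ to $b^+$; it is proper and has neither $a^-$ nor $b^-$ in its interior, so $f_A(J)\subseteq J$ and $f_B(J)\subseteq J$. Because $A$ has positive eigenvalues, the projective inclusion $f_A(J)\subseteq J$ upgrades to $A(K)\subseteq K$ for the solid cone $K$ over $J$: a matrix with positive eigenvalues is joined to the identity through matrices with positive eigenvalues and the same eigendirections, all of which preserve $J$, so by connectedness the solid cone over $J$ (one of its two ray-lifts) is carried into itself; likewise $B(K)\subseteq K$. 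This produces the required common invariant cone, completing the equivalence.

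I expect the main obstacle to be getting the ``co-parallel versus anti-parallel'' dictionary right, that is, translating the orientation-of-translation-axes condition in the definition into the combinatorial statement ``the two attracting fixed points are adjacent on $\hat{\mathbb{R}}$'', since this is precisely the point at which a sign or orientation can be flipped; one safe check is to compare with the algebraic description in \cref{algebraic desc regions}. A secondary point, to be stated with care, is the passage from projective invariance of the arc to linear invariance of the solid cone: it genuinely uses positivity of the eigenvalues (for instance $-I$ preserves every line but no solid cone of the required type).
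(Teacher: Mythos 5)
Your proposal is correct and follows essentially the same route as the paper: both directions reduce to the existence of a common invariant proper solid cone, with Perron--Frobenius/fixed-point positions on $\hat{\mathbb{R}}$ ruling out the anti-parallel configuration and the arc between the two attracting fixed points furnishing the cone in the crossing and co-parallel cases. Your write-up is somewhat more careful than the paper's about the arc-invariance criterion and the lift from projective invariance of the arc to linear invariance of the solid cone (where positivity of the eigenvalues is indeed the point), but the underlying argument is the same.
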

\begin{proof}
We first prove the forward direction. We may assume our matrices $A,B$ are non-negative. By Perron-Frobenius theorem, both M\"obius transformations  $f_A$ and $f_B$ have an attracting non-negative fixed point and a repelling non-positive fixed point. Hence $(A,B)$ cannot be anti-parallel. Since $A,B$ are non-negative, it is clear that $\tr{A},\tr{B}\geq 0$ and so this direction follows.

 Now assume that $(A,B)$ is co-parallel or crossing with non-negative traces. Since the traces are non-negative and determinants are positive, both $A$ and $B$ have only positive eigenvalues. Therefore, in either case (co-parallel or crossing) there exists a cone $C$ such that $A(C)\subseteq C$ and $B(C)\subseteq C$ (Any cone that contains the dominant eigenvectors of $A$ and $B$ and does not contain the non-dominant eigendirections of $A$ and $B$ will work). Now take a matrix $P$ that maps the cone $C$ to the first quadrant and consider $(PAP^{-1},PBP^{-1})$. These new matrices keep the first quadrant invariant by construction. That means both of them must be non-negative as desired.
\end{proof}
We are now ready to prove the following corollary:
\begin{corollary}
Let $(A,B)$ be matrices that are simultaneously conjugate to non-negative matrices, then their SMP, if exists, must be Sturmian. 
\end{corollary}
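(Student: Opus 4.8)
The plan is to reduce the statement to the two cases already handled, crossing and co-parallel, by first disposing of degenerate and non-diagonalizable situations. If $(A,B)$ is simultaneously conjugate to non-negative matrices and an SMP exists, I would first argue that we may assume both matrices are irreducible: in the reducible case the classification is trivial (only $A$, only $B$, or all products), and such pairs do not concern us here. I would also reduce to the case $\JSR(A,B)>0$ using \cref{jsr zero}, since $\JSR=0$ forces reducibility.

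Next I would split according to the signs of the determinants. Non-negative $2\times 2$ matrices always have determinant of arbitrary sign, so the case analysis is genuine. If $\det(A)\det(B)\le 0$ then $(A,B)\in\mathcal{R}_{mix}$, and the second bullet of \cref{intro thm} already gives an SMP of the form $A$, $B$, $AB^n$, or $A^nB$ — but I should be careful here, because $AB^n$ need not be Sturmian in general. The cleanest route is instead: non-negative matrices with determinants of mixed sign or a zero determinant — I would check directly whether such a pair can have an SMP that is not a single letter, but more efficiently I would just invoke that a non-negative matrix with nonzero determinant and the right trace sign has only real nonnegative eigenvalues, so the interesting case is when both determinants are positive. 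For $\det(A)<0$ or $\det(B)<0$ with non-negative entries the matrix has a negative eigenvalue, and I would handle this by passing to squares or noting the corresponding product structure; alternatively, since the corollary only asserts Sturmian-ness of an SMP when one exists, I would lean on \cref{both negative determinants} and the mixed-case theorem, observing that $A$, $B$, $AB$, $A^nB$, $AB^n$ are all Sturmian words (they are prefixes of mechanical sequences).

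The heart of the argument is the positive-determinant case. If $A,B\in GL^+(2,\mathbb{R})$, are irreducible, and simultaneously conjugate to non-negative matrices, then they are real diagonalizable (a non-negative matrix with positive determinant and non-negative trace has real eigenvalues by Perron–Frobenius, and irreducibility rules out a repeated eigenvalue with a single eigenline forcing reducibility) and $\tr(A),\tr(B)\ge 0$. Then \cref{non-negative lemma} applies and tells us $(A,B)$ is either crossing or co-parallel. In the crossing case, \cref{crossing smps} says every SMP is $A$ or $B$, and single letters are Sturmian words, so we are done. In the co-parallel case, the last theorem of \cref{section 9} says that if an SMP exists it is unique up to cyclic permutation and Sturmian, which is exactly what we want.

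The main obstacle is making the determinant-sign reduction airtight: I need to confirm that for non-negative matrices the only way to leave the union $\mathcal{R}_{cross}\cup\mathcal{R}_{copar}$ (into $\mathcal{S}_{anti}$ or $\mathcal{S}_{complex}$) is excluded, and that the SMP forms produced by \cref{ mixed case} and \cref{both negative determinants} — namely $A$, $B$, $AB$, $A^nB$, $AB^n$ — are all Sturmian words, which requires recalling that $0^n1$, $01^n$, $0$, $1$, $01$ are all (prefixes of) mechanical/Sturmian words per \cref{e. def of christoffel}. Assembling these observations, every SMP of a non-negative pair is Sturmian, completing the proof.
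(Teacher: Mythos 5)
Your proposal is correct and follows essentially the same route as the paper: split on the signs of the determinants, observe that the SMP forms $A$, $B$, $AB$, $A^nB$, $AB^n$ guaranteed by \cref{intro thm} in $\mathcal{R}_{mix}$ and $\mathcal{R}_{neg}$ are Sturmian (Christoffel) words, and in the positive-determinant case use \cref{non-negative lemma} to place the pair in $\mathcal{R}_{cross}$ or $\mathcal{R}_{copar}$ before invoking \cref{intro thm} again. Your mid-proof worry that $AB^n$ might not be Sturmian is unfounded and you correctly resolve it at the end, since $01^n$ and $0^n1$ are Christoffel words of slopes $n/(n+1)$ and $1/(n+1)$ respectively.
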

\begin{proof}
By \cref{intro thm} if either $A$ or $B$ have a negative determinant, then an SMP exists and must be Sturmian since products of the form $A^nB$ and $AB^n$ are Sturmian. If both $A,B$ have positive determinants then by \cref{non-negative lemma} $(A,B)$ must be crossing or co-parallel. However, again invoking \cref{intro thm} we see that if an SMP exists, it must be Sturmian.
\end{proof}
\section*{Acknowledgements}
I would like to thank my advisor Jairo Bochi for invaluable suggestions and comments. This paper would not be possible without his insight and encouragement.I would also like to thank the anonymous referee. Their comments greatly improved readability and allowed me to fix a flawed proof.

\end{document}